\newtheorem{thm}{Theorem}[section]
\newtheorem{cor}[thm]{Corollary}
\newtheorem{lem}[thm]{Lemma}
\newtheorem{prop}[thm]{Proposition}
\theoremstyle{definition}
\newtheorem{example}[thm]{Example}
\let\ol=\overline
\let\ap=\alpha
\newcommand{\Aut}{{\rm Aut}}
\newcommand{\Sub}{{\rm Sub}}
\newcommand{\mbb}{\mathbb}
\newcommand{\N}{{\mathbb N}}
\newcommand{\Z}{{\mathbb Z}}
\newcommand{\R}{{\mathbb R}}
\newcommand{\C}{{\mathbb C}}
\newcommand{\Q}{{\mathbb Q}}
\newcommand{\mH}{{\mathbb H}}
\newcommand{\T}{\overline{T}}
\newcommand{\mT}{\mbb{T}}
\newcommand{\Sc}{{\mathbb S}}
\newcommand{\Id}{{\rm Id}}
\newcommand{\Homeo}{\rm Homeo}
\newcommand{\G}{\mathcal G}
\newcommand{\du}{{\rm d}\,}
\newcommand{\Inn}{{\rm Inn}}
\newcommand{\inn}{{\rm inn}}
\newcommand{\Ad}{{\rm Ad}}
\newcommand{\NC}{{\rm (NC)}}
\newcommand{\GL}{{\rm GL}}
\newcommand{\SL}{{\rm SL}}
\date{}
\title{Distal Actions of Automorphisms of \\ Nilpotent Groups $G$ on Sub$_G$ and \\ Applications to Lattices in Lie Groups}
\author{Rajdip Palit and Riddhi Shah}
\begin{document}
\maketitle
\bigskip
\begin{abstract} For a locally compact group $G$,  we study the distality of the action of automorphisms $T$ of $G$ on $\Sub_G$,   
the compact space of closed subgroups of $G$ endowed with the Chabauty topology. For a certain class of discrete groups $G$, 
we show that $T$ acts distally on $\Sub_G$ if and only if $T^n$ is the identity map for some $n\in\N$. As an application, 
we get that for a $T$-invariant lattice $\Gamma$ in a simply connected nilpotent Lie group $G$, $T$ acts distally on 
$\Sub_G$ if and only if it acts distally on $\Sub_\Gamma$. This also holds for any closed $T$-invariant co-compact subgroup 
$\Gamma$ in $G$. For a lattice $\Gamma$ in a simply connected solvable Lie group, we study conditions under which its automorphisms 
act distally on $\Sub_\Gamma$. We construct an example highlighting the difference between the behaviour of automorphisms on 
a lattice in a solvable Lie group from that in a nilpotent Lie group. We also characterise automorphisms of a lattice $\Gamma$ in
a connected semisimple Lie group which act distally on $\Sub_\Gamma$. For torsion-free compactly generated nilpotent (metrizable) 
groups $G$, we obtain the following characterisation: $T$ acts distally on $\Sub_G$ if and only if $T$ is contained in a compact subgroup 
of  $\Aut(G)$. Using these results, we characterise the class of such groups $G$ which act distally on $\Sub_G$. We also show that any 
compactly generated distal group $G$ is Lie projective.  
\end{abstract}

\quad {\bf Keywords}: Distal actions, Space of closed subgroups, Chabauty topology, \\
\hphantom{2.1 cm}Nilpotent groups, Lattices in Lie groups.

\medskip
\quad {\bf Mathematical Subject Classification (2010)}: Primary 37B05; \\
\hphantom{2.1.cm}Secondary 22E25, 22E40, 22D45. 

\section{Introduction}

Distal actions were introduced by David Hilbert to study the dynamics of non-ergodic actions on compact spaces 
(cf.\ Moore \cite{Mo}). Let $X$ be a (Hausdorff) topological space. A semigroup $\mathfrak{S}$ of homeomorphisms 
of $X$ is said to act {\it distally} on $X$ if for every pair of distinct elements $x, y\in X$, the closure of 
$\{(T(x),T(y))\mid T\in\mathfrak{S}\}$ does not intersect the diagonal $\{(d,d)\mid d\in X \}$. Let $\Homeo(X)$ 
denote the set of homeomorphisms of $X$. The map $T\in\Homeo(X)$ is said to be {\it distal} if the group 
$\{T^n\}_{n\in\Z}$ acts distally on $X$. If $X$ is compact, then $T$ is distal if and only if the semigroup 
$\{T^n\}_{n\in\N}$ acts distally (cf.\ \cite{BJM}). Let $G$ be a locally compact (Hausdorff) group with the identity $e$ 
and let $T\in \Aut(G)$. Then $T$ is distal if and only if 
$e\notin \overline{\{T^{n}(x)\mid n\in{\Z}\}}$ whenever $x\neq e$.

Distal actions on compact spaces have been studied extensively by Ellis \cite{El} who obtained a characterisation, 
and by Furstenberg \cite{Fu} who has a deep structure theorem for distal maps on compact metric spaces. Distal 
actions by automorphisms on Lie groups and locally compact groups have been studied by many mathematicians 
(see Abels \cite{A1, A2}, Jaworski-Raja \cite{JR}, Raja-Shah \cite{RS1,RS2}, Reid \cite{Re}, Shah \cite{Sh} and the 
references cited therein).

A locally compact (Hausdorff) group $G$ is said to be {\it distal} if the conjugacy action of $G$ on $G$ is distal. 
Equivalently, $e\notin \overline{\{gxg^{-1}\mid g\in G\}}$, for every $x\neq e$. All discrete groups, compact groups 
and nilpotent groups are distal. It is well known that a connected locally compact group $G$ is distal if and and only if 
it has polynomial growth; and such a $G$ is a compact extension of a connected solvable normal subgroup 
(see \cite{Ro} and \cite{Je}). In \cite{Re}, Reid has shown that any compactly generated totally disconnected distal 
group is Lie projective. We extend this to all compactly generated locally compact distal groups (see 
Theorem \ref{distal-str}). 

For a locally compact group $G$, let $\Sub_G$ denote the space of all closed subgroups of $G$ equipped with 
the Chabauty topology (cf.\ \cite{Ch}). Then $\Sub_G$ is compact. It is metrizable if $G$ is so (cf.\ \cite{BP}). Note 
that $\Sub_G$ has been identified for certain groups $G$, e.g.\ $\Sub_\R$ is isomorphic to $[0,\infty]$, $\Sub_\Z$ is 
isomorphic to $\{0\}\cup\{1/n\mid n\in\N\}$ and $\Sub_{\R^2}$ is isomorphic to $\Sc^4$. For the study of various aspects
of $\Sub_G$ for different groups $G$, we refer the reader to  Abert et al \cite{ABBGNRS}, Baik and Clavier \cite{BC1,BC2}, 
Bridson et al \cite{BHK}, Pourezza and Hubbard \cite{PH} and the references cited therein.

 There is a natural action of $\Aut(G)$, the group of automorphisms of $G$, on $\Sub_G$ as follows:
$$\Aut(G)\times \Sub_G\to \Sub_G,\ H\mapsto T(H),\ T\in\Aut(G), H\in\Sub_G.$$
Each $T\in \Aut(G)$ defines a homeomorphism of $\Sub_G$ and the corresponding map from 
$\Aut(G)\to\Homeo(\Sub_G)$ is a group homomorphism. 

For automorphisms $T$ of connected Lie groups $G$, Shah and Yadav in \cite{SY3} have studied and characterised 
the distality of the $T$-action on $\Sub_G$ under certain conditions on $T$ or on $G$, e.g.\ $T$ is unipotent or the 
largest connected central subgroup of $G$ is torsion-free. Our main aim is to study the distality of this action for 
some disconnected metrizable groups $G$, namely, a certain class of discrete groups, compact groups and compactly 
generated nilpotent groups.

A discrete (closed) subgroup $\Gamma$ in a locally compact group $G$ is said to be a {\it lattice} in $G$ if $G/H$ carries 
a finite $G$-invariant measure. We refer the reader to \cite{Rag} for generalities on lattices. 
Note that a lattice $\Gamma$ in a simply connected nilpotent group $G$ is a finitely generated 
discrete nilpotent co-compact subgroup and any automorphism $T$ of $\Gamma$ 
extends to a unique automorphism 
of $G$ (cf.\ \cite{Rag}, Theorem 2.11 and Corollary 1 following it). Note also that $\Sub_\Gamma$ is much smaller than 
$\Sub_G$; e.g.\ $\Sub_{\Z^n}$ is countable, while $\Sub_{\R^n}$ is not. We are motivated by the following question: 
Whether it is enough to study the $T$-action on $\Sub_\Gamma$ to determine the distality of the $T$-action on 
$\Sub_G$. We show that it is in fact enough to assume the distality of the $T$-action on $\Sub^c_\Gamma$, the set of 
cyclic subgroups of $\Gamma$, to show that $T^n=\Id$, the identity map on $G$ and hence, it acts distally on $\Sub_G$ 
(more generally, see Corollary \ref{lattice-nilp}). We also get a suitable generalisation of this for a closed co-compact 
Lie subgroup $\Gamma$ in $G$ (see Corollary \ref{closed-nil}). For a lattice $\Gamma$ in a simply connected solvable group 
$G$, if $T\in\Aut(G)$ and $T(\Gamma)=\Gamma$, the distality of the action of $T$ on $\Sub_\Gamma$ implies that 
$T^n|_\Gamma=\Id$, but $T$ need not act distally on $\Sub_G$ (more generally, see Theorem \ref{lattice-solv} and 
Example \ref{ex1}). We also get a characterisation for automorphisms of a lattice $\Gamma$ in a connected 
semisimple Lie group which act distally on $\Sub_\Gamma$ (see Theorem \ref{newp}). For  locally compact 
compactly generated nilpotent (metrizable) groups $G$ such that $G^0$ is torsion-free, we get that 
$T\in\Aut(G)$ acts distally on $\Sub_G$ if and only if $T$ is contained in a compact subgroup of $\Aut(G)$ 
(see Theorem \ref{cg-nilp-distal}). This also holds for any compact totally disconnected metrizable group (see 
Proposition \ref{cpt-distal}). We also characterise  metrizable locally compact compactly generated nilpotent groups 
$G$ whose inner automorphisms act distally on $\Sub_G$ (see Theorem \ref{distal-gp}); this is an analogue of 
Corollary 4.5 of \cite{SY3}. 

Some of the results about the actions of automorphisms on $\Sub_G$ are proven under weaker assumptions 
such as either $T$ acts distally on $\Sub^a_G$, the set of closed abelian subgroups of $G$, or on a smaller class 
$\Sub^c_G$, the set of closed cyclic subgroups of $G$. In \cite {BHK}, Bridson, de la Harpe and Kleptsyn describe 
the structure of $\Sub^a_\mH$ and various other subspaces of $\Sub^a_\mH$, for the 3-dimensional Heisenberg 
group $\mH$, and they also study and describe the action of $\Aut(G)$ on some of these spaces in detail. 
Baik and Clavier have identified $\Sub^a_G$ for $G={\rm PSL}(2,\C)$ in \cite{BC2} and, they also give a description 
of the space which is the closure of $\Sub^c_G$ in $\Sub_G$, where $G$ is either ${\rm PSL}(2,\R)$ or 
${\rm PSL}(2,\C)$ (cf.\ \cite{BC1, BC2}). We give conditions on discrete groups $G$ under which 
$\Sub^c_G$ is closed in $\Sub_G$ and study the distality of the action of automorphisms of $G$ on $\Sub^c_G$. 
We also prove certain results for the automorphisms in the class $\NC$ introduced in \cite{SY3}, which contains 
those that act distally on $\Sub^a_G$ or on the closure of $\Sub^c_G$.

Throughout, let $G$ be a locally compact (Hausdorff) group with the identity $e$. For a subgroup $H$ of $G$, let $H^0$ 
denote the connected component of the identity $e$ in $H$, $[H,H]$ denote the commutator subgroup of $H$, $Z(H)$ 
denote the center of $H$ and let $Z_G(H)$ denote the centraliser of $H$ in $G$. For $B\subset G$, let $\ol{B}$ denote 
the closure of $B$ in $G$. If $B$ is a group, so is $\ol{B}$. For any $T\in\Aut(G)$, $T^0$ is the identity map of $G$. 

\section{Compactly Generated Distal Groups}

Recall that a locally compact group $G$ is distal if the conjugation action of $G$ on $G$ is distal, i.e.\ for every $x\in G$ 
such that $x\ne e$, the closure of $\{gxg^{-1}\mid g\in G\}$ does not contain the identity $e$. Compact groups, nilpotent 
groups and discrete groups are all distal. A locally compact group is said to be Lie projective if it has compact normal 
subgroups $K_\alpha$, such that $\bigcap_\ap K_\ap=\{e\}$ and $G/K_\alpha$ is a Lie group for each $\alpha$. Note 
that any connected, more generally any almost connected locally compact group is Lie projective; ($G$ is 
almost connected if $G/G^0$ is compact). 

It is shown by Willis in \cite{Wi}, that any compactly generated totally disconnected locally compact nilpotent group 
is Lie projective. This was extended by Reid to all compactly generated totally disconnected locally compact distal 
groups (cf.\ \cite{Re}, Corollary 1.9). We generalise this to all locally compact groups as follows. 

\begin{thm} \label{distal-str}
Any compactly generated locally compact distal group is Lie projective. 
\end{thm}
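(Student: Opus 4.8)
The plan is to reduce the problem to the two cases already available in the literature --- almost connected groups, which are Lie projective by the solution of Hilbert's fifth problem, and totally disconnected distal groups, which are Lie projective by Reid (\cite{Re}, Corollary 1.9) --- and then to splice these together across the exact sequence $1\to G^0\to G\to G/G^0\to 1$. Recall that it suffices to produce, for every neighbourhood $U$ of $e$, a compact normal subgroup $K\subseteq U$ of $G$ with $G/K$ a Lie group; the family of all such $K$ then has trivial intersection and exhibits $G$ as Lie projective. Since a locally compact group is a Lie group precisely when it has no small subgroups, and this is a local condition, $G/K$ is Lie as soon as some open subgroup of it is; this lets me test the Lie property on a conveniently chosen open subgroup.

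First I would pass to $G/G^0$. As a quotient of a compactly generated group it is compactly generated, it is totally disconnected, and I claim it is again distal; this descent of distality along the (characteristic) identity component is the first point needing a short separate argument, using that $e\notin\overline{\{gxg^{-1}\}}$ in $G$ forces the analogous statement for the induced conjugation in $G/G^0$ via the van Dantzig compact open subgroups of the quotient. Granting this, Reid's theorem applies to $G/G^0$, so it has arbitrarily small compact open normal subgroups. Pulling such a subgroup back through $G\to G/G^0$ yields an open normal subgroup $V\trianglelefteq G$ with $G^0\subseteq V$ and $V/G^0$ compact; thus $V$ is almost connected, and because $V$ is open the discrete quotient $G/V$ is compactly generated, hence finitely generated.

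By Yamabe's theorem the almost connected group $V$ is itself Lie projective: for each neighbourhood of $e$ it has a compact subgroup $L$, normal in $V$, with $V/L$ a Lie group, and by the locality of the no-small-subgroups property this would already give $G/L$ the correct local structure were $L$ normal in $G$. The one remaining --- and genuinely hard --- point is therefore to upgrade such an $L$ from $V$-normal to $G$-normal without losing compactness or the Lie quotient. I expect this to be the main obstacle, and it is exactly where distality must be used: writing $G/V=\langle g_1V,\dots,g_kV\rangle$, the conjugations by the $g_i$ are automorphisms of $V$, and I would form the normal closure $K=\overline{\langle\, gLg^{-1}:g\in G\,\rangle}$ in $V$. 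A priori this orbit of conjugates could fail to be relatively compact; distality of the conjugation action of $G$ is what should prevent the conjugates of the small compact subgroup $L$ from escaping to infinity. Concretely, I would argue --- in the spirit of the ``distal implies relatively compact in $\Aut$'' results used elsewhere in this paper and in Raja--Shah --- that the distal automorphisms of $V$ induced by the finitely many $g_i$ generate a relatively compact set of automorphisms of $V$, so that the $\Inn(G)$-orbit of $L$ stays within a compact set and $K$ is again a small compact subgroup, now normal in $G$, with $V/K$ (hence, by openness, $G/K$) a Lie group.

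Finally I would assemble the conclusion: letting $U$ shrink and choosing $L$, and hence $K$, inside $U$, the resulting compact normal subgroups $K$ of $G$ with $G/K$ Lie have intersection $\{e\}$, so $G$ is Lie projective. The two steps to watch are the descent of distality to $G/G^0$ and, above all, the $G$-equivariant compactness of the normal closure in the previous paragraph; the former is routine via van Dantzig, while the latter is the crux and is precisely the place where the distality hypothesis, rather than mere compact generation, is indispensable.
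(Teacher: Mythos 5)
Your opening reduction is exactly the paper's: distality descends to $G/G^0$ (the paper gets this from Corollary 3.4 of \cite{RS1}; it is not quite a routine van Dantzig exercise, but it is available), Reid's theorem \cite{Re} then gives small compact open normal subgroups of $G/G^0$, and pulling back produces open normal almost connected subgroups $V$ of $G$. The genuine gap is at the step you yourself single out as the crux, and it is real: the principle you invoke --- that the conjugations $\inn(g_i)|_V$, being distal automorphisms of $V$, generate a relatively compact subset of $\Aut(V)$ --- is false. Distality of an action on a group does not bound it in the automorphism group: a nontrivial unipotent $u\in\GL(2,\R)$ acts distally on $\R^2$ while $\{u^n\}_{n\in\Z}$ is unbounded, and $G=\Z\ltimes_u\R^2$ is a compactly generated nilpotent (hence distal) group in which conjugation by the generator of $\Z$ restricts, on $V=G^0=\R^2$, to precisely this unbounded automorphism. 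The ``distal implies relatively compact in $\Aut$'' results you allude to (e.g.\ Theorem \ref{cg-nilp-distal} of this paper, or the Raja--Shah results) take as hypothesis distality of the action on $\Sub_G$, which is far stronger than distality on $G$, and carry additional structural assumptions; none of them applies here.

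Moreover, the part of your conclusion that is true holds for a different reason than you give, and it still does not suffice. Any compact $L$ normal in $V$ lies inside the maximal compact normal subgroup $K$ of the almost connected group $V$; since $K$ is characteristic in $V$ and $V$ is normal in $G$, $K$ is normal in $G$, so all conjugates $gLg^{-1}$ lie in $K$ and the normal closure of $L$ is automatically a compact $G$-normal subgroup with Lie quotient --- no compactness argument in $\Aut(V)$ is needed. What fails is smallness: the normal closure of an arbitrarily small $L$ can be all of $K$, so your family of $G$-normal subgroups need not have trivial intersection. The paper's own non-distal example $\Z\ltimes(\mT^2)^\Z$ exhibits the mechanism: the shift-normal closure of each small subgroup $\{x\in(\mT^2)^\Z \mid x_n=e \mbox{ for } |n|\le N\}$ is the whole of $(\mT^2)^\Z$. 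Ruling this out is exactly where distality must be used \emph{equivariantly}, and this requires ingredients your sketch does not supply: the paper takes $K$ as above (so $G$-normality is free), applies Jaworski's theorem (\cite{Ja}, Theorem 2.6 and Corollary 2.7) to the distal conjugation action on the compact group $K$ of a countable subgroup of $G$ covering the discrete quotient $G/KG^0$, obtaining arbitrarily small invariant compact normal subgroups $K_\beta\subset K$ with $K/K_\beta$ a Lie group, and then uses Iwasawa's theorem (Theorem $1'$ of \cite{Iw}) to see that $G^0$ acts on $K$ through inner automorphisms of $K^0$, so that every normal subgroup of $K$ --- in particular $K_\beta$ --- is $G^0$-invariant; hence $K_\beta$ is normal in all of $G$ and $G/K_\beta$ is a Lie group. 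Without these (or equivalent) equivariant small-subgroup results, your argument does not close.
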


\begin{proof} Let $G$ be a compactly generated locally compact distal group. By Corollary 3.4 of \cite{RS1}, 
$G/G^0$ is distal. Since $G/G^0$ is also compactly generated, by Corollary 1.9 of \cite{Re}, there exists a 
neighbourhood basis of the identity consisting of compact open normal subgroups in $G/G^0$, and hence 
there exist open normal subgroups $H_\alpha$ in $G$, such that $H_\alpha/G^0$ is compact and 
$\bigcap_\alpha H_\alpha= G^0$. Let $\ap$ be fixed. Note that the maximal compact normal subgroup $K$ 
of $H_\ap$ is characteristic in $H_\ap$ and hence normal in $G$. Let $H=KG^0$. Then $H$ is normal in $G$, 
$K$ is the maximal compact normal subgroup of $H$ and $H/K$ is a Lie group. As $H_\ap$ is Lie projective, 
we have that $H$ is an open normal subgroup of $G$. Therefore, $G/H$ is discrete, and it is finitely generated, 
since $G$ is compactly generated. Let $x_1,\ldots, x_n\in G$ be such that their images in $G/H$ generate $G/H$. 
Let $L$ be the subgroup generated by $x_1,\ldots, x_n$ in $G$. Then $L$ is countable. Since the conjugation 
action of $L$ on $K$ is distal, $K$ has compact normal subgroups $K_\beta$ such that $K_\beta$ is $L$-invariant 
and $K/K_\beta$ is a Lie group for each $\beta$, and $\bigcap_\beta K_\beta=\{e\}$ (cf.\ \cite{Ja}, Theorem 2.6 
and Corollary 2.7). Let $\beta$ be fixed. Since $G^0$ normalises $K$, by Theorem $1'$ of \cite{Iw}, the action of 
$G^0$ (by inner automorphisms) on $K$ is the same as the conjugation action of $K^0$ on $K$. Therefore, 
every normal subgroup of $K$ is 
normalised by $G^0$. In particular,  $K_\beta$ is normal in $H=KG^0$. Since $L$ also normalises 
$K_\beta$ and $LH=G$, we get that $K_\beta$ is normal in $G$. As $K/K_\beta$ and $H/K$ are Lie groups, 
so is $H/K_\beta$. Moreover, $G/H$ is discrete. Therefore, $G/K_\beta$ is a Lie group. Since this is true for every  
$\beta$ and since $\bigcap_\beta K_\beta=\{e\}$, $G$ is Lie projective.  
\end{proof}

Note that in Theorem \ref{distal-str},  both the conditions that the group is compactly generated and distal are necessary. 
Willis in \cite{Wi} has given an example of a locally compact nilpotent (distal) group which is not Lie projective. For 
a compactly generated locally compact group which is not distal, one can take $G=\Z\ltimes (\mT^2)^\Z$, where 
$\mT^2$ is the (compact) 2-dimensional torus, and the action of $1\in\Z$ on $(\mT^2)$ is given by the shift action. 
Here, $G$ is compactly generated and locally compact, but it is not distal as the shift action on $(\mT^2)^\Z$ is ergodic. 
It is easy to see that $G$ is not Lie projective. 

A locally compact group $G$ is said to be $\Lambda$-Lie projective for a subgroup $\Lambda\subset\Aut(G)$, if it admits 
compact open normal $\Lambda$-invariant subgroups $\{K_\ap\}$ such that $G/K_\ap$ is a Lie group for each $\ap$ and 
$\bigcap_\ap K_\ap=\{e\}$. Note that $\Lambda$-Lie projective groups were introduced in \cite{RS2} and they are obviously 
Lie projective. $G$ is said to be $T$-Lie projective for some $T\in\Aut(G)$ if it is $\{T^n\}_{n\in\Z}$-Lie projective. A group 
$G$ is $\Lambda$-Lie projective for a finitely generated group $\Lambda$ of $\Aut(G)$ if and only if $\Lambda\ltimes G$ 
is Lie projective, where $\Lambda$ is endowed with the discrete topology. Similarly, $G$ is $T$-Lie projective for some 
$T\in\Aut(G)$ if and only if $\Z\ltimes_T G$ is Lie projective, where the action of $n\in\Z$ on $G$ is given by the action of 
$T^n$ on $G$, and $\Z$ is endowed with the discrete topology.

We say that a locally compact group $\Lambda$ acts (continuously) on $G$ by automorphisms, if there exits a group homomorphism 
$\psi:\Lambda\to\Aut(G)$ such that the corresponding map $\Lambda\times G\to G$ given by 
$(\lambda, g)\mapsto \psi(\lambda)(g)$, $\lambda\in\Lambda$, $g\in G$, is continuous. 

For a compact group $G$, if $T\in\Aut(G)$ is distal, it follows by Lemma 2.5 of \cite{RS2}, that $G$ is $T$-Lie projective. 
As any compactly generated nilpotent group is a generalized $\ol{FC}$ group (cf.\ \cite{L2}), the following useful corollary 
follows easily from Corollary 3.7 of \cite{RS2} and Theorem \ref{distal-str}.

\begin{cor} \label{T-Lie}
Let $G$ be a compactly generated locally compact distal group. If $\Lambda$ is a compactly generated 
nilpotent group which acts distally on $G$ by automorphisms, then $G$ is $\Lambda$-Lie projective. In particular, 
if $T\in\Aut(G)$ acts distally on $G$, then  $G$ is $T$-Lie projective; i.e.\ $\Z\ltimes_T G$ is Lie projective.
\end{cor}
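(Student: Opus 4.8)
The plan is to upgrade the (unequivariant) Lie projectivity of $G$, now available from Theorem \ref{distal-str}, to a $\Lambda$-equivariant version by exploiting the distality of the $\Lambda$-action; producing a Lie-projective system whose members are $\Lambda$-invariant is exactly what is required for $\Lambda$-Lie projectivity. First I would apply Theorem \ref{distal-str} to the compactly generated locally compact distal group $G$ to obtain compact normal subgroups $K_\ap$ with $\bigcap_\ap K_\ap=\{e\}$ and each $G/K_\ap$ a Lie group. The only remaining task is to arrange that such subgroups may be chosen $\Lambda$-invariant.

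For this step I would invoke Corollary 3.7 of \cite{RS2}, which supplies exactly such $\Lambda$-invariant subgroups provided the acting group is a generalized $\ol{FC}$ group acting distally on a group already known to be Lie projective. The proof then reduces to checking these hypotheses. The $\Lambda$-action on $G$ is distal by assumption, and $G$ is Lie projective by the previous paragraph; it remains only to note that $\Lambda$, being a compactly generated nilpotent group, belongs to the class of generalized $\ol{FC}$ groups, which is the single external fact I would borrow from \cite{L2}. With all hypotheses verified, Corollary 3.7 of \cite{RS2} yields that $G$ is $\Lambda$-Lie projective.

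To deduce the final assertion I would specialise to $\Lambda=\{T^n\}_{n\in\Z}$. This group is cyclic, hence abelian and in particular nilpotent, and it is generated by the single element $T$, hence finitely and so compactly generated; moreover the assumption that $T$ acts distally on $G$ says precisely that this $\Lambda$ acts distally. The main part of the corollary therefore applies and gives that $G$ is $\{T^n\}_{n\in\Z}$-Lie projective, i.e.\ $T$-Lie projective by definition, whence $\Z\ltimes_T G$ is Lie projective by the equivalence recorded before the statement (with $\Z$ carrying the discrete topology). The main obstacle is not in this assembly, which is essentially bookkeeping, but is absorbed entirely into Corollary 3.7 of \cite{RS2}: the genuine difficulty is the construction of the $\Lambda$-invariant compact normal subgroups out of the distal action, which generalizes the single-automorphism statement on compact groups (Lemma 2.5 of \cite{RS2}) to a generalized $\ol{FC}$ group $\Lambda$ acting on a Lie projective $G$. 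Granting that result, the only point here that requires any care is the identification of compactly generated nilpotent groups as generalized $\ol{FC}$ groups, after which the corollary follows by matching hypotheses.
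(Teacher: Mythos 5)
Your proposal is correct and follows exactly the paper's own route: combine Theorem \ref{distal-str} (to get Lie projectivity of $G$) with the fact from \cite{L2} that compactly generated nilpotent groups are generalized $\ol{FC}$ groups, and then invoke Corollary 3.7 of \cite{RS2} to obtain $\Lambda$-invariant compact normal subgroups, specialising to $\Lambda=\{T^n\}_{n\in\Z}$ for the last assertion. The paper gives precisely this argument (in compressed form) in the paragraph preceding the corollary.
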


The next corollary will be useful, it is well-known and it can be easily deduced from Theorem 2 of \cite{L1} and 
Lemma 3.1 of \cite{Da}. Since nilpotent groups are distal, one can also use Theorem \ref{distal-str} instead of Theorem 2 of \cite{L1}
to prove it.

\begin{cor} \label{nilp-str} Any locally compact compactly generated nilpotent group 
admits a unique maximal compact subgroup. 
\end{cor}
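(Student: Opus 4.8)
The plan is to use Lie projectivity to reduce the statement to the case of Lie groups, to dispose of that case by the known description of maximal compact subgroups of nilpotent Lie groups, and then to transport the conclusion back along the projective system. Since $G$ is nilpotent it is distal, and as it is also compactly generated, Theorem \ref{distal-str} shows that $G$ is Lie projective. Thus there is a family $\{K_\alpha\}$ of compact normal subgroups of $G$ with $\bigcap_\alpha K_\alpha=\{e\}$ and with each quotient $G/K_\alpha$ a Lie group.

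Fix $\alpha$ and let $\pi_\alpha:G\to G/K_\alpha$ be the quotient map. Then $G/K_\alpha$ is a compactly generated nilpotent Lie group, so by Lemma 3.1 of \cite{Da} it has a unique maximal compact subgroup $M_\alpha$; by uniqueness, $M_\alpha$ contains every compact subgroup of $G/K_\alpha$. Let $\widetilde M_\alpha=\pi_\alpha^{-1}(M_\alpha)$. Since $M_\alpha$ and $K_\alpha=\ker\pi_\alpha$ are both compact, the closed subgroup $\widetilde M_\alpha$ is compact as well.

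I would then set $M=\bigcap_\alpha \widetilde M_\alpha$ and claim that this is the required unique maximal compact subgroup. Being an intersection of compact subgroups, $M$ is itself compact. To see that $M$ contains every compact subgroup, let $C$ be any compact subgroup of $G$. For each $\alpha$ the image $\pi_\alpha(C)$ is a compact subgroup of $G/K_\alpha$, hence $\pi_\alpha(C)\subseteq M_\alpha$, so that $C\subseteq \widetilde M_\alpha$. As this holds for every $\alpha$, we get $C\subseteq M$. Therefore $M$ is compact and contains every compact subgroup of $G$, which is exactly the assertion that $G$ possesses a unique maximal compact subgroup.

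The substantive input is the Lie-group case, imported here as Lemma 3.1 of \cite{Da}: a compactly generated nilpotent Lie group $L$ has a unique maximal compact subgroup. Its connected part $L^0$ is a connected nilpotent Lie group, whose maximal compact subgroup is a central torus, while $L/L^0$ is finitely generated; reconciling these two pieces so as to produce a single compact subgroup absorbing all compact subgroups is the delicate point, and is the reason I would appeal to the cited lemma rather than argue ab initio. Once that case is in hand, the pull-back argument above is routine, the only point requiring care being that the $\pi_\alpha$-preimage of a compact subgroup remains compact, which holds precisely because each $K_\alpha$ is compact.
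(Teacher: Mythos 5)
Your proof is correct and takes essentially the same route the paper indicates: the paper deduces this corollary by combining Theorem \ref{distal-str} (as a substitute for Theorem 2 of \cite{L1}) with Lemma 3.1 of \cite{Da}, which is precisely the reduction via Lie projectivity to the Lie-group case, followed by the pull-back argument, that you spell out. One small simplification: a single compact normal subgroup $K_\alpha$ with $G/K_\alpha$ a Lie group already suffices, since your $\widetilde M_\alpha=\pi_\alpha^{-1}(M_\alpha)$ is by itself a compact subgroup containing every compact subgroup of $G$, so the intersection over all $\alpha$ is redundant (indeed all the $\widetilde M_\alpha$ coincide).
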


The following group theoretic result, which may be known, will be useful in proving Theorem \ref{distal-gp}. It 
implies for a special case of compactly generated $G$ below that the unique maximal compact subgroup 
centralises $G^0$. In particular it implies that $G^0$ is central in any compact nilpotent group $G$. 

\begin{prop} \label{centraliser-nilp} Let $G$ be a locally compact nilpotent group. Then any compact subgroup of $G$ 
centralises $G^0$. 
\end{prop}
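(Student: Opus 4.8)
The plan is to reduce the statement to a single element and then to a Lie-theoretic core. First I would observe that it suffices to prove the following: if $k\in G$ generates a relatively compact subgroup, i.e.\ $\overline{\langle k\rangle}$ is compact, then $k$ centralises $G^0$. Indeed, given a compact subgroup $K$ of $G$, every $k\in K$ has $\overline{\langle k\rangle}\subseteq K$ compact, so applying this to each $k\in K$ shows that $K$ centralises $G^0$. Since $G^0$ is (characteristic, hence) normal in $G$, conjugation by such a $k$ restricts to an automorphism $c_k$ of $G^0$, and the goal becomes $c_k|_{G^0}=\Id$.

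The Lie core I would isolate as a lemma: if $H$ is a locally compact nilpotent group, $M\trianglelefteq H$ a closed connected Lie subgroup, and $h\in H$ with $\overline{\langle h\rangle}$ compact, then $h$ centralises $M$. To prove it, let $\mathfrak m$ be the Lie algebra of $M$ and let $\theta\in\GL(\mathfrak m)$ be the derivative of the automorphism $c_h|_M$. Nilpotency forces $\theta$ to be unipotent: for $X\in\mathfrak m$ the first-order part in $t$ of the $c$-fold iterated commutator $[h,[h,\dots,[h,\exp(tX)]\dots]]$ (with $c$ copies of $h$) is $(\theta-I)^cX$, and this commutator is trivial when $H$ has class $c$, giving $(\theta-I)^c=0$. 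On the other hand, continuity of the map $g\mapsto d(c_g|_M)$ from $\overline{\langle h\rangle}$ to $\GL(\mathfrak m)$ sends the compact group $\overline{\langle h\rangle}$ onto a compact set containing $\{\theta^n\}_{n\in\Z}$, so $\theta$ lies in a compact subgroup of $\GL(\mathfrak m)$. A unipotent matrix generating a relatively compact group has bounded powers, which is impossible unless it is the identity; hence $\theta=I$. As $M$ is connected, $c_h|_M$ then has trivial derivative and therefore equals $\Id_M$.

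It remains to pass from the Lie case to a general connected locally compact $G^0$, and this is the step I expect to be the main obstacle, since the natural approximating compact normal subgroups of $G^0$ need not be $k$-invariant. The device I would use is to enlarge the group: set $P=\overline{\langle k\rangle}$ (compact) and $L=PG^0$. Because $P$ normalises $G^0$ and is compact, $L$ is a closed subgroup, and $L/G^0\cong P/(P\cap G^0)$ is compact, so $L$ is almost connected. By the structure theory of almost connected locally compact groups (Gleason--Yamabe), $L$ admits arbitrarily small compact normal subgroups $K_\alpha$ with $L/K_\alpha$ a Lie group and $\bigcap_\alpha K_\alpha=\{e\}$. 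Each $K_\alpha$, being normal in $L\ni k$, is $k$-invariant, and $M_\alpha:=G^0K_\alpha/K_\alpha$ is a closed connected normal Lie subgroup of the nilpotent group $H:=L/K_\alpha$. Applying the lemma to $H$, $M_\alpha$ and $h=kK_\alpha$ (whose powers are relatively compact, lying in the image of $P$) yields that $kK_\alpha$ centralises $M_\alpha$, i.e.\ $[k,x]\in K_\alpha$ for every $x\in G^0$. Letting $\alpha$ vary and using $\bigcap_\alpha K_\alpha=\{e\}$ gives $[k,x]=e$ for all $x\in G^0$, which is exactly the assertion that $k$ centralises $G^0$.
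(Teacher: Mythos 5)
Your proof is correct, and it takes a genuinely different route from the paper's. The paper first reduces to the case where the compact subgroup is the unique maximal compact subgroup of $KG^0$ (via Corollary \ref{nilp-str}), then uses Iwasawa's theorem that $[\Aut(K)]^0=[\Inn(K)]^0=\{\inn(k)\mid k\in K^0\}$ to reduce the whole statement to showing that the identity component of a compact nilpotent group is central; after passing to compact nilpotent Lie groups by Lie projectivity, it concludes by induction on the nilpotency class, using that the commutators $xgx^{-1}g^{-1}$ fall into the centre and that the connected compact abelian group $G^0$ is divisible. You instead reduce to a single element $k$ with $P=\overline{\langle k\rangle}$ compact, and your core is linear-algebraic: the differential $\theta$ of $\inn(k)$ on a closed connected normal Lie subgroup is unipotent (first-order terms of iterated commutators, which vanish by nilpotency) and simultaneously lies in a compact subgroup of $\GL(\mathfrak{m})$ (continuity of $g\mapsto \du(\inn(g)|_M)$ on the compact group $P$), hence $\theta=I$; the non-Lie case is then handled by applying this in the Lie quotients $L/K_\alpha$ of the almost connected group $L=PG^0$ furnished by Gleason--Yamabe, whose kernels are automatically $k$-invariant because they are normal in $L$, which neatly dispatches the invariance issue you flag. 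As for what each approach buys: yours trades Iwasawa's automorphism theorem and the maximal-compact-subgroup reduction for the Gleason--Yamabe theorem, so neither is more elementary overall; but your ``unipotent with relatively compact powers implies trivial'' mechanism is the same device the paper invokes elsewhere (Proposition 4.2 of \cite{SY3}, in the proof of Theorem \ref{distal-gp}), and your Lie-core lemma is a reusable general statement, whereas the paper's induction establishes along the way the intermediate fact, of independent interest, that $G^0$ is central in any compact nilpotent group.
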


\begin{proof} Let $K$ be a compact subgroup of $G$. Since $G^0$ is normal and $K$ is compact, we get that $KG^0$ is 
a closed subgroup. Also, it is compactly generated since $G^0$ is so. By Corollary \ref{nilp-str}, $KG^0$ has a unique 
maximal compact subgroup. To prove the assertion, we may replace $G$ by $KG^0$ and also assume that $K$ is the 
unique maximal compact subgroup of $G$, and show that it centralises $G^0$. 

As $K$ is a unique maximal compact subgroup, it is characteristic in $G$, and hence it is normal in $G$. For $g\in G$, 
let $\inn(g)$ denote the inner automorphism of $G$ by the element $g$, i.e.\ $\inn(g)(x)=gxg^{-1}$, $x\in G$. 
As $G^0$ is connected, $\inn (g)|_K\in [\Aut(K)]^0$ 
for all $g\in G^0$. By Theorem $1'$ of \cite{Iw}, $[\Aut(K)]^0 = [\Inn(K)]^0 = \{\inn(k) \mid k \in K^0\}$. That is, given 
$g\in G^0$, there exists $k\in K^0$ such that $\inn(g)|_K=\inn (k)|_K$. To show that $G^0$ centralises $K$, it is enough 
to show that $K^0$ is central in $K$. Now we may assume that $G$ is a compact nilpotent group and show that $G^0$ is 
central in $G$. Since $G$ is compact, it is Lie projective, hence it is enough to prove this for a compact nilpotent Lie group $G$. 
As $G^0$ is compact and nilpotent, it is abelian (cf.\ \cite{Iw}, Lemma 2.2). Moreover, $G/G^0$ is finite. 

We prove by induction on the length $l(G)$ of the central series of the compact nilpotent Lie group $G$ that $G^0$ is central 
in $G$. If $l(G)=1$, then $G$ is abelian. Suppose for some $k\in \N$, the above statement holds for all such $G$ with  
$l(G)\leq k$. Now let $G$ be such that $l(G)=k+1$. Let $Z(G)$ be the center of $G$. Then $l(G/Z(G))=k$. By the induction hypothesis, 
$(G/Z(G))^0$ is central in $G/Z(G)$. Let $x\in G$, $g\in G^0$ and let $x_g=xgx^{-1}g^{-1}$. Since $(G^0Z(G))/Z(G)=(G/Z(G))^0$, 
from the preceding assertion we have that $x_g\in Z(G)$. Since $G/G^0$ is finite, we get that $x^n\in G^0$ for some $n\in\N$. 
Since $x_g\in Z(G)$, we get that  $x_g^n=xg^nx^{-1}g^{-n}=x^ngx^{-n}g^{-1}=e$ as $G^0$ is abelian. Therefore, 
$x$ centralises $g^n$. Since this holds for all $g\in G^0$, which is connected and abelian, we get that $x$ centralises $G^0$. 
Since this is true for all $x\in G$, we have that $G^0$ is central in $G$. Now the proof is complete by induction. 
\end{proof}

\section{Distal Actions of Automorphisms on Sub$_{\boldsymbol G}$ for Discrete groups ${\boldsymbol G}$ and Applications to Lattices} 

Let $G$ be a locally compact (metrizable) group. A sub-basis of the Chabauty topology on $\Sub_G$ is given by the sets 
$\mathcal{O}_1(K)=\{A\in \Sub_G\mid A\cap K=\emptyset\}$, $\mathcal{O}_2(U)=\{A\in \Sub_G\mid A\cap U\neq\emptyset\}$, 
where $K$ is a compact and $U$ is an open subset of $G$. As observed earlier, $\Sub_G$ is compact and metrizable.  
For details on the Chabauty topology see \cite{BHK} and \cite{PH}.

We first state a criterion for convergence of sequences in $\Sub_G$ (cf.\ \cite {BP}). 

\begin{lem} \label{conv} Let $G$ be a locally compact first countable $($metrizable$)$ group. 
A sequence $\{H_n\}$ converges to $H$ in $\Sub_G$ if and only if the following hold:
\begin{enumerate}
\item[$(i)$] For any $h\in H$, there exists a sequence $\{h_n\}$ with $h_n\in H_n$, $n\in\N$, such that $h_n\to h$.
\item[$(ii)$] For any unbounded sequence $\{n_k\}\subset\N$, if $\{h_{n_k}\}_{k\in\N}$ is such that 
$h_{n_k}\in H_{n_k}$, $k\in\N$, and $h_{n_k}\to h$, then $h\in H$.
\end{enumerate}
\end{lem}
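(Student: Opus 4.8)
The plan is to prove the two implications separately, exploiting the sub-basic sets $\mathcal{O}_1(K)$ and $\mathcal{O}_2(U)$ that generate the Chabauty topology. Since $G$ is metrizable, so is $\Sub_G$; hence it is first countable and convergence of sequences determines the topology. Moreover, it suffices to test membership in \emph{sub-basic} neighbourhoods of $H$, because every basic neighbourhood is a finite intersection of these, and a sequence eventually lies in a finite intersection as soon as it eventually lies in each factor. Throughout I would fix, using first countability of $G$, a decreasing neighbourhood basis $\{U_m\}$ at whichever point is relevant.

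First I would show that $H_n\to H$ forces $(i)$ and $(ii)$. For $(i)$, fix $h\in H$ and a decreasing neighbourhood basis $\{U_m\}$ at $h$. Each $U_m$ is open and meets $H$, so $H\in\mathcal{O}_2(U_m)$; by convergence there is an increasing sequence $N_m\to\infty$ with $H_n\cap U_m\ne\emptyset$ for all $n\ge N_m$. Interleaving these, I set $h_n=e$ for $n<N_1$ and choose $h_n\in H_n\cap U_m$ whenever $N_m\le n<N_{m+1}$; then $h_n\to h$. For $(ii)$, given an unbounded $\{n_k\}$ with $h_{n_k}\in H_{n_k}$ and $h_{n_k}\to h$, I argue by contradiction: if $h\notin H$, then since $H$ is closed and $G$ is locally compact Hausdorff, $h$ has a compact neighbourhood $K$ with $K\cap H=\emptyset$, so $H\in\mathcal{O}_1(K)$; convergence gives $H_n\cap K=\emptyset$ for all large $n$, while (after passing to a subsequence with $n_k\to\infty$) $h_{n_k}$ eventually lies in $K$, a contradiction.

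Next I would prove the converse, that $(i)$ and $(ii)$ yield $H_n\to H$, checking each sub-basic neighbourhood of $H$ separately. If $H\in\mathcal{O}_2(U)$, pick $h\in H\cap U$; by $(i)$ there are $h_n\in H_n$ with $h_n\to h$, and since $U$ is open, $h_n\in U$ eventually, so $H_n\in\mathcal{O}_2(U)$ eventually. If $H\in\mathcal{O}_1(K)$ with $K$ compact, suppose the conclusion fails; then infinitely many indices $n_k$ satisfy $H_{n_k}\cap K\ne\emptyset$, giving an unbounded sequence and points $h_{n_k}\in H_{n_k}\cap K$. Compactness of $K$ furnishes a convergent subsequence $h_{n_{k_j}}\to h\in K$, and $(ii)$ then forces $h\in H$, contradicting $H\cap K=\emptyset$.

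The routine parts are the neighbourhood-basis bookkeeping; the steps that need genuine care are the two uses of the point-set topology of $G$. In the direction proving $(ii)$, I must produce a compact neighbourhood of $h$ disjoint from the closed set $H$, which is exactly where local compactness (beyond mere metrizability) enters; and in the $\mathcal{O}_1(K)$ case of the converse I must extract a convergent subsequence from inside the compact $K$. The one genuinely delicate bookkeeping point is the treatment of an \emph{unbounded} index sequence rather than one tending to infinity: in both appearances of $(ii)$ I would first pass to a subsequence of indices tending to $\infty$, along which the element sequence still converges to $h$, so that the ``eventually'' statements about $\{H_n\}$ can legitimately be invoked.
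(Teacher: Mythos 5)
Your proof is correct. There is, however, nothing in the paper to compare it against: the paper states this lemma as a known convergence criterion with only a citation to \cite{BP}, and gives no proof of its own. Your blind attempt therefore supplies the missing argument, and it is the natural one given the paper's description of the Chabauty topology: you test convergence against the sub-basic sets $\mathcal{O}_1(K)$ and $\mathcal{O}_2(U)$ (a legitimate reduction, since a sequence eventually lies in a finite intersection as soon as it eventually lies in each factor), you use local compactness of $G$ to produce a compact neighbourhood of $h$ disjoint from the closed subgroup $H$ when proving $(ii)$, and you use sequential compactness of $K$ (valid because $G$ is metrizable) to extract the convergent subsequence in the $\mathcal{O}_1(K)$ case of the converse; your explicit passage to index subsequences tending to infinity is exactly the right bookkeeping for the word ``unbounded.'' Two minor remarks: the appeal to metrizability of $\Sub_G$ so that ``convergence of sequences determines the topology'' is superfluous, since the lemma concerns one given sequence and both directions of your argument address that sequence directly; and in the interleaving step of $(i)$ you implicitly use that $e\in H_n$ for every $n$, which is fine precisely because the elements of $\Sub_G$ are subgroups rather than arbitrary closed subsets.
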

 
 We define $\Sub_G^c$ as the space of all  closed cyclic subgroups of $G$. In general, $\Sub_G^c$ need not be 
 closed in $\Sub_G$, e.g.\ $\Sub_\R^c$ is dense in $\Sub_\R$ and $\Sub_\R=\Sub_\R^c\cup\{\R\}$. We will show 
 that for a certain class of groups $G$, which include discrete finitely generated nilpotent groups, $\Sub^c_G$ is closed. 
 We first state a useful lemma about the limits of sequences in a discrete group. We give a short proof for the sake of 
 completeness. 
 
 \begin{lem} \label{discr-conv}
 Let $G$ be a discrete group. If $\{H_n\} \subset \Sub_G$ is such that $H_n\to H$ in $\Sub_G$, then $H=\liminf H_n$.
 \end{lem}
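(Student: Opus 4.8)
The plan is to apply the sequential convergence criterion of Lemma \ref{conv}, which is available here since a discrete group is locally compact, first countable and metrizable. The key simplification specific to the discrete setting is that convergence of a sequence means eventual constancy: if $g_n \to g$ in $G$, then $g_n = g$ for all sufficiently large $n$, because $\{g\}$ is open. I would also record at the outset the definition $\liminf H_n = \bigcup_{N\in\N}\bigcap_{n\geq N} H_n$, so that $x \in \liminf H_n$ holds precisely when $x \in H_n$ for all but finitely many $n$. The proof then reduces to verifying the two inclusions $H \subseteq \liminf H_n$ and $\liminf H_n \subseteq H$ separately, using parts $(i)$ and $(ii)$ of Lemma \ref{conv} respectively.

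First I would prove $H \subseteq \liminf H_n$. Given $h \in H$, part $(i)$ of Lemma \ref{conv} supplies a sequence $\{h_n\}$ with $h_n \in H_n$ and $h_n \to h$. Since $G$ is discrete, there is $N \in \N$ with $h_n = h$ for all $n \geq N$; hence $h \in H_n$ for all $n \geq N$, which is exactly the statement $h \in \liminf H_n$.

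For the reverse inclusion $\liminf H_n \subseteq H$, I would take $x \in \liminf H_n$, so that there is $N$ with $x \in H_n$ for every $n \geq N$. Choosing the unbounded sequence $n_k = N + k$ together with the constant choice $h_{n_k} = x \in H_{n_k}$, one has $h_{n_k} \to x$, and part $(ii)$ of Lemma \ref{conv} then forces $x \in H$. Combining the two inclusions gives $H = \liminf H_n$.

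I do not expect any substantial obstacle in this argument, as it is essentially a direct unwinding of Lemma \ref{conv} in the discrete case. The only point requiring a little care is the formulation of condition $(ii)$, which is stated for arbitrary unbounded subsequences: to apply it one must explicitly exhibit an unbounded $\{n_k\}$ (any unbounded sequence inside $\{n : n \geq N\}$ will do) and the constant sequence $h_{n_k} = x$, rather than invoking $(ii)$ abstractly.
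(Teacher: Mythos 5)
Your proof is correct and follows essentially the same route as the paper: both inclusions are obtained by applying parts $(i)$ and $(ii)$ of Lemma \ref{conv}, with discreteness forcing eventual constancy of the approximating sequence. Your only addition is making explicit the unbounded sequence and constant choice needed to invoke part $(ii)$, a detail the paper leaves implicit.
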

 
 \begin{proof}
 By Lemma \ref{conv}\,$(i)$, we get that for every $h\in H$, there exist $h_n\in H_n$, $n\in\N$, such that 
 $h_n\to h$. Since $G$ is discrete we get that $h_n=h$ for all large $n$. Therefore, $h\in \liminf H_n$, 
 and hence $H \subset\liminf H_n$. Conversely, suppose $h\in\liminf H_n$. Then there exists $k\in\N$ 
 such that $h\in H_n$, for all $n\geq k$. Now by Lemma \ref{conv}\,$(ii)$, we get that $h\in H$. Hence 
 $$H=\bigcup_{k=1}^{\infty}\bigcap_{n=k}^{\infty} H_n=\liminf H_n.$$
 \end{proof}
 
 As any discrete subgroup of a connected solvable Lie group is finitely generated (cf.\ \cite{Rag}, Corollary 3.9), 
 the following will be useful for results on lattices of a connected solvable Lie group 
 (see Theorem \ref{lattice-solv}).
 
 \begin{lem} \label{fin-gen}
Let $G$ be a discrete finitely generated group such that all its subgroups are also finitely generated. Then $\Sub_G^c$ is closed.
\end{lem}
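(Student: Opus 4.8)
The plan is to use the compactness and metrizability of $\Sub_G$ (noted earlier in the excerpt) to reduce closedness to \emph{sequential} closedness: it suffices to show that whenever $\{H_n\}\subset\Sub_G^c$ converges to some $H\in\Sub_G$, the limit $H$ is again cyclic. Since $G$ is discrete, I would first invoke Lemma \ref{discr-conv} to identify the limit explicitly as
$$H=\liminf H_n=\bigcup_{k=1}^{\infty}\bigcap_{n=k}^{\infty}H_n.$$
This description is what makes the argument work, as it expresses $H$ as an increasing union whose members are the intersections $\bigcap_{n\geq k}H_n$, and in particular pins down which elements of $H$ eventually lie in every $H_n$.

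The key step is to exploit the hypothesis that every subgroup of $G$ is finitely generated. Since $H$ is a subgroup of $G$, it is finitely generated, say $H=\langle g_1,\ldots,g_m\rangle$. Each generator $g_i$ lies in $\liminf H_n$, so there is an index $k_i$ with $g_i\in H_n$ for all $n\geq k_i$. Taking $k=\max_i k_i$, I get $g_1,\ldots,g_m\in H_n$ for every $n\geq k$, and hence $H\subseteq H_n$ for all such $n$.

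To conclude, I would fix any single $n\geq k$ and observe that $H$ is then a subgroup of the cyclic group $H_n$. Since every subgroup of a cyclic group is cyclic (this holds for both $\Z$ and $\Z/d\Z$), the group $H$ is cyclic, i.e.\ $H\in\Sub_G^c$. Thus $\Sub_G^c$ is sequentially closed in the metrizable space $\Sub_G$, hence closed.

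I do not anticipate a serious obstacle; the whole argument is concentrated in the finite-generation hypothesis, which is precisely what permits the finitely many generators of $H$ to be absorbed into a \emph{single} $H_n$. The only point requiring a little care is the justification that finite generation forces a uniform index $k$ beyond which $H\subseteq H_n$: without the assumption that subgroups of $G$ are finitely generated, the limit $H$ might require infinitely many generators, no single $H_n$ would contain $H$, and the conclusion could fail.
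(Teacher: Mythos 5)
Your proof is correct and follows essentially the same route as the paper's: identify the limit as $\liminf H_n$ via Lemma \ref{discr-conv}, use the hypothesis that $H$ is finitely generated to absorb the generators into a single term of the increasing union, and conclude that $H$ is cyclic because subgroups of cyclic groups are cyclic. The only cosmetic difference is that the paper phrases the absorption in terms of the groups $G_k=\bigcap_{n\geq k}H_n$ (concluding $H=G_{n_0}$) while you place $H$ directly inside a single $H_n$; the substance is identical.
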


\begin{proof} Let $\{H_{n}\}$ be a sequence in $\Sub_G^c$ such that $H_n\to H$. By Lemma \ref{discr-conv}, 
$H=\bigcup_{k=1}^{\infty}G_k$, where $G_k=\bigcap_{n=k}^{\infty} H_n$. From the hypothesis, $H$ is finitely generated. 
Let $\{x_1,\hdots, x_m\}$ be the set of generators for $H$. Since $H$ is an increasing union of cyclic groups $G_k$, 
there exists $n_0\in \N$ such that $x_1,\hdots, x_m\in G_{n_0}$. Therefore $H=G_{n_0}$, and hence $H$ is cyclic. 
\end{proof}

 A locally compact group $G$ is said to be {\em strongly root compact} if for every compact subset $C$ of $G$, there exists 
 a compact subset $C_0$ of $G$ with the property that for every $n\in\N$, the finite sequences $\{x_1,\ldots, x_n\}$ of $G$ 
 with $x_n=e$, satisfying  $Cx_iCx_j\cap Cx_{i+j}\ne\emptyset$ for all $i+j\leq n$, are contained in $C_0$ 
 (see \cite{He}, Definition 3.1.10). All compact groups and compactly generated nilpotent groups are strongly root compact 
 (cf.\ \cite{He}, Theorem 3.1.17). 
 
 For any $g\in G$, let $R_g=\{x\in G\mid x^n=g \mbox{ for some } n\in\N\}$, the set of roots of $g$ in $G$. If $G$ is strongly root compact,
 then by Theorem 3.1.13 of \cite{He}, $R_g$ is relatively compact for every $g\in G$.
 
\begin{lem} \label{cyc-closed} Let $G$ be a discrete group.
If for every $g\in G$, the set $R_g$ of roots of $g$ is finite, then $\Sub_G^c$ is closed.
In particular, if $G$ is a strongly root compact, then $\Sub_G^c$ is closed. 
\end{lem}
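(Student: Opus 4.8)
The plan is to prove the two assertions in sequence. The second statement (strong root compactness implies $\Sub_G^c$ closed) should follow from the first, so I will focus first on showing that finiteness of all root sets $R_g$ forces $\Sub_G^c$ to be closed.

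\medskip

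First, consider a sequence $\{H_n\}\subset\Sub_G^c$ with $H_n\to H$ in $\Sub_G$. Since $G$ is discrete, Lemma \ref{discr-conv} gives $H=\liminf H_n=\bigcup_{k}\bigcap_{n\geq k}H_n$. Write each $H_n=\langle g_n\rangle$ for some generator $g_n$. The goal is to produce a single element generating $H$. The natural approach is to distinguish two cases according to whether $H$ is finite or infinite. If $H$ is finite, then (again using discreteness and Lemma \ref{discr-conv}) $H$ equals $H_n$ for all large $n$, since an increasing union stabilizes once it reaches the finite group $H$; hence $H$ is cyclic. If $H$ is infinite, I would pick any nonidentity $h\in H$. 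Then $h\in\bigcap_{n\geq k}H_n=\bigcap_{n\geq k}\langle g_n\rangle$ for some $k$, so for each such $n$ we have $h=g_n^{m_n}$ for some integer $m_n$; that is, $g_n\in R_h^{\pm}$, the (finite) set of elements some power of which equals $h$ or $h^{-1}$. Here I would use the hypothesis that $R_h$ is finite to conclude that the generators $g_n$ (for $n\geq k$) range over a finite set.

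\medskip

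The key step, and the one I expect to be the main obstacle, is converting this finiteness of the pool of generators into the conclusion that $H$ is cyclic. Since $\{g_n\mid n\geq k\}$ is finite, by passing to a subsequence I may assume all $g_n$ equal a fixed element $g$, so that $H_n=\langle g\rangle$ is constant along this subsequence; as the full sequence converges, $H$ must equal this constant limit $\langle g\rangle$, which is cyclic. The subtlety to handle carefully is that a convergent sequence in a Hausdorff space has a unique limit, so it suffices to identify the limit along one subsequence — I must make sure the Chabauty topology is indeed Hausdorff (it is, being the topology on a compact metrizable space, cf.\ the remarks before Lemma \ref{conv}), and that $R_h$ being finite genuinely bounds the generators rather than merely the elements $h$ themselves. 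The careful bookkeeping is ensuring $h\neq e$ can be chosen (which requires $H$ infinite, handled above) and that every $g_n$ for large $n$ satisfies $g_n^{m_n}=h$ with the same target $h$, placing $g_n$ in the finite set $R_h$.

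\medskip

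For the final ``in particular'' clause, I would invoke the stated consequence of strong root compactness: by Theorem 3.1.13 of \cite{He}, $R_g$ is relatively compact for every $g\in G$. Since $G$ is discrete, relatively compact sets are finite, so every $R_g$ is finite, and the first part of the lemma applies directly to give that $\Sub_G^c$ is closed.
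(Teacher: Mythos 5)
Your argument is correct in substance, and it takes a genuinely different route from the paper's proof. The paper works with the increasing chain of subgroups $G_k=\bigcap_{n\geq k}H_n$ (each cyclic, being a subgroup of the cyclic group $H_k$): if every $G_k$ is finite, it uses $H\subset R_e$ to see that the union stabilizes, and otherwise it picks generators $x_k$ of the infinite cyclic groups $G_k$, observes that $x_1=x_k^{n_k}$ for suitable $n_k\in\N$, and uses finiteness of $R_{x_1}$ to force $G_k=G_{n_0}$ for all $k\geq n_0$, whence $H=G_{n_0}$. You instead fix a single nontrivial $h\in H$, observe that every generator $g_n$ of $H_n$ (for $n$ large) lies in the finite set $R_h\cup R_{h^{-1}}$, extract a subsequence along which $H_n=\langle g\rangle$ is constant, and invoke uniqueness of limits in the Hausdorff space $\Sub_G$ to get $H=\langle g\rangle$. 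Your route is shorter, bypasses the chain analysis entirely, and yields slightly more information: the limit $H$ is actually attained, i.e.\ $H=H_n$ for infinitely many $n$. The paper's route stays inside the combinatorial identity $H=\liminf H_n$ and never appeals to uniqueness of limits; both are legitimate.

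One inaccuracy to repair: in your finite case you claim that $H=H_n$ for all large $n$. That is false in general; for $G=\Z$ and $H_n=n!\,\Z$ one has $H_n\to H=\{0\}$, which is finite (indeed trivial), yet $H\neq H_n$ for every $n$. What stabilization actually gives is $H=G_{k_0}=\bigcap_{n\geq k_0}H_n$ for some $k_0$, and cyclicity of $H$ then follows because $G_{k_0}$ is a subgroup of the cyclic group $H_{k_0}$. The blemish is harmless, though, because your main argument never needs $H$ to be infinite, only $H\neq\{e\}$: the cleaner case split is $H=\{e\}$ (trivially cyclic) versus $H\neq\{e\}$ (your pigeonhole-plus-uniqueness argument), after which the finite/infinite dichotomy can be discarded altogether.
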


\begin{proof} For a discrete group $G$, suppose $R_g$ is finite for every $g\in G$. 
 Let $\{H_n\}$ be a sequence in $\Sub_G^c$ such that $H_n\to H$. By Lemma \ref{discr-conv} we get $H=\liminf H_n$. 
 Let $G_k=\bigcap_{n=k}^{\infty}H_n$, $k\in \N$. Then for all $k\in\N$, $G_k\subset G_{k+1}\subset H_{k+1}$, 
 $G_k\subset H$ and each $G_k$ is cyclic. If $H=\{e\}$, then there is nothing to prove. Suppose $G_k$ is finite for all $k\in\N$. 
Since $H=\bigcup_{k=1}^{\infty}G_k$, it consists of finite order elements. Hence $H\subset R_e$ which is finite, so $H$ is finite. 
Therefore, $H=G_k$ for some $k$, and hence $H$ is cyclic. Now suppose there exists $m\in\N$ such that $G_m$ is infinite. 
Replacing $\{H_n\}$ by $\{H_{n+m}\}$, we may assume that $G_1$ is infinite, and hence that $G_k$ is an infinite cyclic group, $k\in\N$.

Let $x_k$ be a generator of $G_k$, $k\in\N$.  As $G_k\subset G_{k+1}$, $k\in\N$, replacing $x_{k+1}$ by its inverse if necessary, 
we get that there exists $l_k\in\N$ such that $x_k=x_{k+1}^{l_k}$. Hence $x_1=x_k^{n_k}$ for all $k\geq 2$, 
where $n_k=l_1\cdots l_{k-1}$. From the hypothesis, $R_{x_1}$ is finite, and hence 
$\{x_k\}_{k\in\N}$ is finite. As each $x_k$ generates an infinite cyclic group $G_k$ and $G_k\subset G_{k+1}$, $k\in\N$, 
there exists $n_0\in\N$ such that $G_k=G_{k+1}$ for all $k\ge n_0$, and hence 
$H=G_{n_0}=\bigcap_{n=n_0}^{\infty}H_n$. Therefore, $H$ is cyclic. This proves
that $\Sub_G^c$ is closed. 

Now suppose $G$ is a strongly root compact discrete group. By Theorem 3.1.13 of \cite{He}, $R_g$ is relatively compact, and 
hence, finite for every $g\in G$. Now it follows from the first statement that $\Sub_G^c$ is closed. 
\end{proof}

The class $\NC$ of automorphisms is defined in \cite{SY3}. An automorphism $T$ of a locally compact metrizable group $G$ 
belongs to class $\NC$ if given any closed cyclic subgroup $A$ of $G$, $T^{n_k}(A)\not\to\{e\}$ for any unbounded sequence 
$\{n_k\}\subset\Z$. For $x\in G$, let $G_x$ denote the cyclic group generated by $x$ in $G$. Then either $G_x$ is closed 
(and hence discrete) or $\ol{G_x}$ is compact. 

Note that if $T$ acts distally on $\Sub^a_G$, then $T\in\NC$. It is easy to see that $T^n\in\NC$ for some $n\in\Z\setminus\{0\}$ 
if and only if $T^n\in\NC$ for all $n\in\Z$. We now state and prove an elementary result about the class $\NC$ for the 
discrete quotient groups.

\begin{lem} \label{discr-quo} Let $G$ be a locally compact first countable $($metrizable$)$ group, $T\in\Aut(G)$ and let $H$ be an 
open normal $T$-invariant subgroup of $G$. Let $\T:G/H\to G/H$ be the automorphism of $G/H$ corresponding to $T$. For any 
$x\in G$, let $\bar x$ denote the element $xH\in G/H$. Suppose $T\in\NC$. If $x\in G\setminus H$ is such that $G_{\bar x}$ is 
infinite, then $\T^{n_k}(G_{\bar x})\not\to\{\bar e\}$, for any unbounded sequence $\{n_k\}\subset\Z$. In particular, if $G/H$ is 
torsion-free, then $\T\in\NC$. 
\end{lem}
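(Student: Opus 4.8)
The plan is to argue by contradiction, transferring the hypothesis $T\in\NC$ from $G$ to $G/H$ through the quotient map. Since $H$ is open in $G$, the quotient $G/H$ is discrete, so every subgroup of $G/H$ is closed and the convergence criterion of Lemma \ref{conv} applies there in the simplified form of Lemma \ref{discr-conv}. Write $\pi\colon G\to G/H$ for the projection; since $\T(\bar x)=\overline{T(x)}$, we have $\pi\circ T^n=\T^n\circ\pi$ for all $n$, and hence $\pi(T^{n}(G_x))=\T^{n}(G_{\bar x})$. Fix $x\in G\setminus H$ with $G_{\bar x}$ infinite. First I would check that $A:=G_x$ is a closed cyclic subgroup of $G$: since $\pi(\ol{G_x})\supseteq G_{\bar x}$ is an infinite subset of the discrete group $G/H$, the set $\ol{G_x}$ cannot be compact, so by the dichotomy recalled before the statement, $G_x$ is closed (and infinite, as $\bar x$ has infinite order). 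Thus $A$ is an admissible test subgroup for the class $\NC$, and it suffices to show that the assumption $\T^{n_k}(G_{\bar x})\to\{\bar e\}$ for some unbounded $\{n_k\}\subset\Z$ forces $T^{n_k}(A)\to\{e\}$, contradicting $T\in\NC$.

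The key preliminary observation is that $A_k:=T^{n_k}(A)$ meets $H$ trivially for every $k$. Indeed, each $A_k$ is infinite cyclic (the image of the infinite cyclic group $A$ under an automorphism), and $\pi(A_k)=\T^{n_k}(G_{\bar x})$ is infinite; since $\pi(A_k)\cong A_k/(A_k\cap H)$ and the only subgroup of infinite index in an infinite cyclic group is the trivial one, we get $A_k\cap H=\{e\}$.

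With this in hand I would verify $A_k\to\{e\}$ directly from Lemma \ref{conv}. Condition $(i)$ is immediate since $e\in A_k$ for all $k$. For condition $(ii)$, take an unbounded subsequence $\{n_{k_j}\}$ and elements $a_{k_j}\in A_{k_j}$ with $a_{k_j}\to a$; applying $\pi$ gives $\pi(a_{k_j})\in\T^{n_{k_j}}(G_{\bar x})$ with $\pi(a_{k_j})\to\pi(a)$, and the assumed convergence $\T^{n_k}(G_{\bar x})\to\{\bar e\}$ together with condition $(ii)$ for that convergence yields $\pi(a)=\bar e$, i.e.\ $a\in H$. Because $G/H$ is discrete, $\pi(a_{k_j})\to\bar e$ means $\pi(a_{k_j})=\bar e$, i.e.\ $a_{k_j}\in H$, for all large $j$; but $a_{k_j}\in A_{k_j}$ and $A_{k_j}\cap H=\{e\}$, so $a_{k_j}=e$ eventually and therefore $a=e$. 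Hence $A_k\to\{e\}$, the desired contradiction. The main obstacle is precisely this last step: the naive attempt to project a nontrivial limit element of a subsequential limit of $T^{n_k}(A)$ down to $G/H$ can fail when that element lies in $H$, and the trivial-intersection observation $A_k\cap H=\{e\}$ is exactly what rules this out and makes the projection argument go through.

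Finally, the ``in particular'' assertion is immediate: if $G/H$ is torsion-free, then every $\bar x\neq\bar e$ generates an infinite cyclic group, so the main statement applies to each nontrivial closed cyclic subgroup $G_{\bar x}$ of the discrete group $G/H$, giving $\T\in\NC$.
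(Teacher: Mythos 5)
Your proof is correct, but its engine is genuinely different from the paper's. Both arguments proceed by contradiction from the assumption $\T^{n_k}(G_{\bar x})\to\{\bar e\}$, and both first record that $G_x$ is closed, discrete and infinite (your justification --- that $\pi(\ol{G_x})$ would be compact, hence finite, in the discrete quotient if $\ol{G_x}$ were compact --- is the more explicit one; the paper states this step without proof). From there the paper leans on compactness of $\Sub_G$: it extracts a subsequence with $T^{n_k}(G_x)\to L$, invokes $T\in\NC$ to conclude $L\ne\{e\}$, picks a nontrivial $g\in L$, approximates it as $T^{n_k}(x^{m_k})\to g$ via Lemma \ref{conv}\,$(i)$, and projects to $G/H$, where discreteness forces ${\bar x}^{m_k}=\bar e$ for large $k$, so that $\bar x$ has finite order --- contradicting the infinitude of $G_{\bar x}$. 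You instead isolate the purely algebraic fact that $T^{n_k}(G_x)\cap H=\{e\}$ (an infinite cyclic group has no proper subgroup of infinite index) and then verify both conditions of Lemma \ref{conv} directly to show that the \emph{full} sequence $T^{n_k}(G_x)$ converges to $\{e\}$, contradicting $T\in\NC$ outright. Your route buys three things: it needs no subsequence extraction and no appeal to compactness of $\Sub_G$; the contradiction is with $\NC$ itself rather than with torsion-freeness of $G_{\bar x}$; and it sidesteps a point the paper leaves implicit, namely that the exponents $m_k$ must be nonzero for large $k$ (which follows from $T^{n_k}(x^{m_k})\to g\ne e$, but is needed to conclude that $\bar x$ is torsion). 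The paper's route is lighter on algebra and uses the infinitude of $G_{\bar x}$ only at the final step. The ``in particular'' assertion is handled identically in both arguments.
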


\begin{proof} As $H$ is open in $G$, it is also closed and $G/H$ is a discrete group. Let $x\in G$ be such that $G_{\bar x}$ 
is infinite. If possible, suppose there exists an unbounded sequence $\{n_k\}\subset\Z$ such that $T^{n_k}(G_{\bar x})\to\{\bar e\}$. 
Let $G_x$ be the group generated by $x\in G$. 
Since $G_{\bar x}$ is closed, discrete and infinite, we have that $G_x$ is also closed, discrete and infinite. As $\Sub_G$ is compact, 
we may choose a subsequence of $\{n_k\}$, and denote it by $\{n_k\}$ again, such that $T^{n_k}(G_x)\to L$ for some $L\in\Sub_G$. 
Since $T\in\NC$, $L\ne\{e\}$. Let $g\in L\setminus\{e\}$, then $T^{n_k}(x^{m_k})\to g$, for some $\{m_k\}\subset\Z$. Now 
$T^{n_k}({\bar x}^{m_k})\to \bar g$. As $T^{n_k}(G_{\bar x})\to\{\bar e\}$, we have that $g\in H$, and hence 
$T^{n_k}({\bar x}^{m_k})=\{\bar e\}$ for all large $k$, since $G/H$ is discrete. This implies that $\bar x$ has finite order, 
which leads to a contradiction. Therefore, $\T^{n_k}(G_{\bar x})\not\to\{\bar e\}$. If $G/H$ is torsion-free, every nontrivial 
element of  $G/H$ generates a discrete infinite group, and hence the last assertion follows easily. 
\end{proof}

For a locally compact group $G$ and $T\in\Aut(G)$, let $M(T)=\{x\in G\mid \{T^n(x)\}_{n\in\Z} \mbox{ is relatively compact}\}$. 
It is a $T$-invariant subgroup of $G$. The following basic lemma about automorphisms of strongly root compact groups in the 
class $\NC$ will be very useful. 

\begin{lem} \label{mt}
Let $G$ be a locally compact  first countable $($metrizable$)$ strongly root compact group. Let $T\in\Aut(G)$ be such that $T\in\NC$.
Then $\{x\in G\mid G_x \mbox{ is closed}\}${ }$\subset M(T)$. 
\end{lem}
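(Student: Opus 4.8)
The plan is to show that any $x\in G$ with $G_x$ closed lies in $M(T)$, i.e.\ that the two-sided orbit $\{T^n(x)\}_{n\in\Z}$ is relatively compact. First I would dispose of the case where $x$ has finite order: then every $T^n(x)$ has the same order as $x$, so the entire orbit is contained in $R_e$, which is relatively compact by strong root compactness (Theorem 3.1.13 of \cite{He}); note that here the hypothesis $T\in\NC$ is not even used. Thus the substantive case is when $A:=G_x$ is an infinite discrete (closed) cyclic group, and from now on I assume this.

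Suppose, for contradiction, that $\{T^n(x)\}_{n\in\Z}$ is not relatively compact. Since $G$ is first countable and locally compact, there is a sequence along which the orbit escapes every compact set; as such an orbit is necessarily infinite and $T$ is bijective, the indices may be taken to form an unbounded sequence $\{n_k\}\subset\Z$ with $T^{n_k}(x)\to\infty$. Because $\Sub_G$ is compact and metrizable, after passing to a subsequence I may assume $T^{n_k}(A)\to L$ in $\Sub_G$. Each $T^{n_k}(A)$ is a closed cyclic subgroup, so since $T\in\NC$ we have $L\ne\{e\}$. Pick $g\in L\setminus\{e\}$. By Lemma \ref{conv}\,$(i)$ there exist $g_k\in T^{n_k}(A)$ with $g_k\to g$; writing $g_k=T^{n_k}(x)^{m_k}$ with $m_k\in\Z$, the fact that $g\ne e$ forces $g_k\ne e$, and hence $m_k\ne 0$, for all large $k$.

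Now comes the decisive step, which I also expect to be the main obstacle. The elements $g_k$ lie in the fixed compact set $C:=\ol{\{g_k\mid k\in\N\}\cup\{g\}}$, while the relation $g_k=T^{n_k}(x)^{m_k}$ exhibits $T^{n_k}(x)$ as an $m_k$-th root of an element of $C$ with $m_k\ne 0$. The key point is to upgrade the relative compactness of each individual root set $R_g$ to the uniform statement that, in a strongly root compact group, the set $\{y\in G\mid y^m\in C\text{ for some }m\in\Z\setminus\{0\}\}$ is relatively compact for every compact $C$ (equivalently, the power maps $y\mapsto y^m$ are proper, uniformly in $m$). Granting this, $\{T^{n_k}(x)\}$ is relatively compact, contradicting $T^{n_k}(x)\to\infty$; this contradiction yields $x\in M(T)$ and finishes the proof.

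The hard part is precisely this uniformity: the excerpt quotes only the pointwise fact that $R_g$ is relatively compact, whereas here the power $g_k$ varies with $k$. I would establish the uniform version directly from Definition 3.1.10 and the proof technique of Theorem 3.1.13 in \cite{He}, feeding the compact set $C\cup\{e\}$ into the defining approximate-cocycle condition applied to the finite power sequences $\{e,y,y^2,\dots\}$. Alternatively one can split according to whether $\{m_k\}$ is bounded (take $m_k$ constant and argue via the closed power map) or unbounded, but in either case one needs some control beyond the single-element statement $R_g$, which is why I regard this as the crux of the argument.
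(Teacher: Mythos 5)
Your argument is correct and is essentially the paper's proof: assuming $O_x=\{T^n(x)\}_{n\in\Z}$ is not relatively compact, extract an escaping subsequence with unbounded indices, use $T\in\NC$ to conclude that the limit $L$ of $T^{n_k}(G_x)$ in $\Sub_G$ is nontrivial, write approximants of some $g\in L\setminus\{e\}$ as powers $T^{n_k}(x)^{m_k}$, and apply strong root compactness to get that $\{T^{n_k}(x)\}$ is relatively compact, a contradiction. The uniform root-compactness statement you single out as the crux is exactly the form in which the paper applies Theorem 3.1.13 of \cite{He} (roots of a compact set, with varying exponents, form a relatively compact set — not merely the pointwise statement about $R_g$ quoted in the preliminaries), and your derivation of it from Definition 3.1.10 via the power sequences $y,y^2,\ldots$ is the standard argument, so there is no gap.
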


\begin{proof} 
Let $x\in G$ be such that $G_x$ is closed and let $O_x=\{T^n(x)\}_{n\in\Z}$. Since $G$ is locally compact and metrizable, and 
$\ol{O_x}$ is separable, it is second countable, and hence $\ol{O_x}\subset \bigcup_{n\in\N} V_n$, for some open relatively 
compact sets $V_n$, and we may also assume that $V_n\subset V_{n+1}$ for all $n\in\N$. For some $n\in\N$, if 
$O_x\subset V_n$, then $\ol{O_x}\subset \ol{V_n}$ and hence $\ol{O_x}$ is compact.
 
If possible, suppose $O_x\not\subset V_n$, $n\in\N$. There exists $k_n\in\Z$ such that $|k_n|\geq n$ and 
$T^{k_n}(x)\not\in V_n$, $n\in\N$.  As $G_x\in\Sub_G$ and the latter is compact, passing to a subsequence
if necessary, we get that $T^{k_n}(G_x)\to H$ in $\Sub_G$ for some closed subgroup $H$ in $G$. Since $T\in\NC$, 
it implies that $H\ne\{e\}$. Let $a\in H$ be such that $a\ne e$. By Lemma \ref{conv}\,$(i)$, there exists a sequence 
$\{m_n\}\subset\Z$ such that $T^{k_n}(x^{m_n})\to a$, and hence $\{T^{k_n}(x^{m_n})\}$ is relatively compact. 
Replacing $a$ by $a^{-1}$ if necessary, and passing to a subsequence, we may assume that $\{m_n\}\subset\N$. 
As $G$ is strongly root compact, by Theorem 3.1.13 of \cite{He}, we get that $\{T^{k_n}(x)\}$ is relatively compact 
and hence, it has a limit point (say), $b$. Then $b\in\ol{O_x}$. As $V_n$ is increasing, we get that for every $m\in\N$, 
$\{T^{k_n}(x)\}_{n\geq m}\subset G\setminus V_m$ which is closed. It follows that $b\not\in V_m$ for every $m\in\N$, and
hence $b\not\in\bigcup_{m\in\N} V_m$; this leads to a contradiction since $b\in \ol{O_x}$. 
Therefore, $O_x\subset V_n$ for some $n\in\N$ and hence $\ol{O_x}$ is 
compact and $x\in M(T)$. This proves the assertion. 
\end{proof}

 In a discrete group every element generates a discrete (closed) cyclic group. The next corollary follows easily from the proof of Lemma \ref{mt} 
 as a sequence in a discrete group converges if and only if it is eventually constant and, discrete compact sets are finite. 

\begin{cor} \label{trivia} Let $G$ be a discrete group such that the set $R_g$ of roots of $g$ is finite for every $g\in G$. 
Let $T\in \Aut(G)$ be such that $T\in\NC$. Then $G=M(T)$. That is, for every $x\in G$, the $T$-orbit of $x$, 
$\{T^n(x)\}_{n\in\Z}$ is finite.
\end{cor}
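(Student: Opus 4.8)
The plan is to deduce Corollary \ref{trivia} directly from Lemma \ref{mt}, exploiting the special features of discrete groups. First I would note that the hypothesis ``$R_g$ is finite for every $g\in G$'' is precisely the condition under which Lemma \ref{cyc-closed} guarantees $\Sub_G^c$ is closed, but more to the point, since $G$ is discrete, every element $x\in G$ generates a cyclic group $G_x$ that is automatically closed (discreteness makes every subgroup closed). Thus the set $\{x\in G\mid G_x\text{ is closed}\}$ appearing in Lemma \ref{mt} is all of $G$. Applying Lemma \ref{mt} — whose hypotheses require $G$ to be locally compact, first countable, metrizable, and strongly root compact — I would get $G\subset M(T)$, hence $G=M(T)$.

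The subtlety, and the step I would examine most carefully, is checking that Lemma \ref{mt} actually applies to a general discrete group satisfying only the ``finite $R_g$'' hypothesis, since the lemma as stated assumes strong root compactness rather than finiteness of root sets. Here I would rely on the remark in the text preceding the corollary: the corollary ``follows easily from the proof of Lemma \ref{mt}'' rather than from its statement. So instead of invoking the lemma as a black box, I would re-run its argument in the discrete setting. A discrete group is trivially locally compact, metrizable and first countable, so the topological preamble of that proof goes through. The only place strong root compactness was used was to conclude, via Theorem 3.1.13 of \cite{He}, that the relatively compact sequence $\{T^{k_n}(x^{m_n})\}$ forces $\{T^{k_n}(x)\}$ to be relatively compact. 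In the discrete case one argues more elementarily: a relatively compact (hence finite) set of powers $x^{m_n}$ together with the finiteness of $R_{x^{m_n}}$-type data, or directly the finiteness of $R_g$, pins down the roots.

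Concretely, I would recast the contradiction step. Suppose $O_x=\{T^n(x)\}_{n\in\Z}$ is infinite. Since $G$ is discrete and $\Sub_G$ is compact, pass to a subsequence with $T^{k_n}(G_x)\to H$, $|k_n|\to\infty$. As $T\in\NC$ and $G_x$ is an infinite discrete cyclic group, $H\ne\{e\}$; pick $a\in H\setminus\{e\}$. By Lemma \ref{conv}\,$(i)$ there are $m_n$ with $T^{k_n}(x^{m_n})\to a$, and since $G$ is discrete this convergence means $T^{k_n}(x^{m_n})=a$ for all large $n$. Then each $T^{k_n}(x)$ is an $m_n$-th root of $a$, so $T^{k_n}(x)\in R_a$, which is finite by hypothesis. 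Hence $\{T^{k_n}(x)\}$ takes only finitely many values, so it has a limit point $b\in\overline{O_x}$; but by the increasing-$V_m$ argument of Lemma \ref{mt} (or directly, because infinitely many distinct values of an orbit cannot all lie in a finite set), this contradicts $O_x$ being infinite. Therefore $O_x$ is finite for every $x$, giving $G=M(T)$.

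The main obstacle I anticipate is purely bookkeeping rather than conceptual: making sure the passage from ``$T^{k_n}(x^{m_n})\to a$'' to ``$T^{k_n}(x)\in R_a$'' is clean, since I must confirm the exponents $m_n$ can be taken so that the roots genuinely land in the finite set $R_a$ (replacing $a$ by $a^{-1}$ and the $m_n$ by their absolute values, exactly as in the proof of Lemma \ref{mt}, to force $m_n\in\N$). Once the root set is finite, the remaining deduction that a discrete relatively compact set is finite and that an infinite orbit cannot be confined to it is immediate, so no further estimates are needed.
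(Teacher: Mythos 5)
Your proposal is correct and is essentially the paper's own argument: the paper proves this corollary precisely by re-running the proof of Lemma \ref{mt} with the discrete simplifications you use (convergence means eventual constancy, compact discrete sets are finite), with the hypothesis that each $R_g$ is finite standing in for the appeal to strong root compactness via Theorem 3.1.13 of \cite{He}. Your closing contradiction is sound as stated, since an infinite $T$-orbit is automatically non-periodic, so the elements $T^{k_n}(x)$ are pairwise distinct and therefore cannot all lie in the finite set $R_a$ (or $R_{a^{-1}}$).
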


For any discrete group, all automorphisms are distal. For strongly root compact discrete groups $G$, or more 
generally for discrete groups $G$ in which the set of roots of every element is finite, the following proposition shows that 
only finite order automorphisms of $G$ act distally on $\Sub_G$. Note that for such groups $G$, 
$\Sub^c_G$ is closed by Lemma \ref{cyc-closed}. The proposition holds in particular for discrete finitely generated 
nilpotent groups as they are strongly root compact (cf.\ \cite{He}). 

\begin{prop} \label{discr-subg}
Let $G$ be a discrete finitely generated group and let $T\in\Aut(G)$. 
Suppose the set $R_g$ of roots of $g$ is finite for every $g\in G$. 
Then the following are equivalent:
\begin{enumerate}
\item[{$1.$}] $T\in\NC$
\item[{$2.$}] $T$ acts distally on $\Sub_G^c$.
\item[{$3.$}] $T$ acts distally on $\Sub_G$.
\item[{$4.$}] $T^n=\Id$, where $\Id$ is the identity map.
\end{enumerate}
In particular, if $G$ is strongly root compact, then $(1-4)$ above are are equivalent. 
\end{prop}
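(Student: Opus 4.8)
The plan is to establish the cycle of implications $4 \Rightarrow 3 \Rightarrow 2 \Rightarrow 1 \Rightarrow 4$, which yields the equivalence of all four conditions. The concluding ``in particular'' clause then follows at once, since a strongly root compact discrete group has $R_g$ finite for every $g$ by Theorem 3.1.13 of \cite{He}, so that the standing hypothesis on roots is automatically satisfied.

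I would begin with the two easy directions. For $4 \Rightarrow 3$: if $T^n = \Id$, then the group $\{T^k\}_{k\in\Z}$ acting on $\Sub_G$ is finite (cyclic of order dividing $n$), and a finite group of homeomorphisms of a Hausdorff space always acts distally. Indeed, for distinct closed subgroups $A \ne B$ the orbit $\{(T^k(A), T^k(B))\}$ is a finite set, and each $T^k$ being a homeomorphism keeps $T^k(A) \ne T^k(B)$, so this orbit (which equals its own closure) misses the diagonal. For $3 \Rightarrow 2$, note that $\Sub^c_G$ is a $T$-invariant subset of $\Sub_G$, in fact closed by Lemma \ref{cyc-closed}; so $\Sub^c_G$ is itself compact and the two notions of distality via $\{T^n\}_{n\in\N}$ and $\{T^n\}_{n\in\Z}$ agree on it, while distality on the ambient space restricts to any invariant subset.

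For $2 \Rightarrow 1$ I would argue by contradiction. If $T \notin \NC$, there is a nontrivial closed cyclic subgroup $A$ and an unbounded sequence $\{n_k\} \subset \Z$ with $T^{n_k}(A) \to \{e\}$. Since $G$ is discrete, $\{e\} \in \Sub^c_G$, and $(A, \{e\})$ is a pair of distinct points of $\Sub^c_G$. But $T^{n_k}(\{e\}) = \{e\}$ for all $k$, so $(T^{n_k}(A), T^{n_k}(\{e\})) \to (\{e\}, \{e\})$ meets the diagonal, contradicting distality on $\Sub^c_G$.

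The main step is $1 \Rightarrow 4$, and here I would invoke Corollary \ref{trivia}: since $G$ is discrete with every $R_g$ finite and $T \in \NC$, we obtain $G = M(T)$, i.e.\ the orbit $\{T^k(x)\}_{k\in\Z}$ is finite for every $x \in G$. Finiteness of this orbit, together with the injectivity of the automorphism $T$, forces a period: for each $x$ there is $d_x \in \N$ with $T^{d_x}(x) = x$. Using that $G$ is finitely generated, say by $x_1, \ldots, x_m$, I would set $n = \mathrm{lcm}(d_{x_1}, \ldots, d_{x_m})$, so that $T^n$ fixes every generator and hence, being an automorphism, equals $\Id$. The crux — and the place where the full strength of the hypotheses enters — is securing the finiteness of all orbits, which is exactly what $\NC$ together with finite root sets delivers through Corollary \ref{trivia}; finite generation then upgrades ``every element has finite orbit'' to ``a single power of $T$ is the identity.''
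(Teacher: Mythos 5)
Your proposal is correct and follows essentially the same route as the paper: the paper also dispatches $4\Rightarrow 3\Rightarrow 2\Rightarrow 1$ as immediate and proves the key implication $1\Rightarrow 4$ exactly as you do, by invoking Corollary \ref{trivia} to get $G=M(T)$, extracting a period for each generator from finiteness of orbits, and taking the lcm over a finite generating set, with the ``in particular'' clause coming from Theorem 3.1.13 of \cite{He}. Your only addition is to spell out the easy implications (finite groups act distally; distality restricts to the invariant closed subset $\Sub^c_G$), which the paper leaves as obvious.
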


\begin{proof}
$4\implies 3 \implies 2\implies 1$ is obvious. It is enough to show that $1\implies 4$. Suppose $T\in\NC$. 
Then by Corollary \ref{trivia}, $G=M(T)$. Now as $G$ is discrete, for every $x\in G$, the $T$-orbit of $x$, 
$\{T^n(x)\}_{n\in\Z}$ is finite, and hence $T^m(x)=x$ for some $m\in\N$. As $G$ is finitely generated, 
there exist $x_1,\ldots, x_l\in G$ which generate $G$. Let $n_1,\ldots, n_l\in\N$ be such that $T^{n_i}(x_i)=x_i$, 
$1\leq i\leq l$. Let $n={\rm lcm}(n_1,\ldots, n_l)$. Then $T^n(x)=x$ for all $x\in G$, i.e.\ $T^n=\Id$. 

If $G$ is strongly root compact, then by Theorem 3.1.13 of \cite{He}, $R_g$ is finite for every $g\in G$, and $(1-4)$ are equivalent
from above. 
\end{proof}

Proposition \ref{discr-subg}, in particular, implies that if $T\in\GL(n,\Z)$, ($n\geq 2$), does not have finite order, then $T$ does not act 
distally on $\Sub_{\Z^n}$; e.g.\ $T$ is any nontrivial strictly upper triangular matrix in $\GL(n,\Z)$ with all its diagonal entries equal to 1.
In fact, since $GL(n,\Z)$ is virtually torsion-free by Selberg's Lemma, there exists a subgroup (say) $L$ of finite index in $GL(n,\Z)$ 
which is torsion-free, and hence every nontrivial $T\in L$ does not acts distally on $\Sub_{\Z^n}$. 
As an application of the proposition, we get the following corollary which relates the behaviour of an automorphism 
of a lattice $\Gamma$ in a simply connected nilpotent group $G$ in terms of the distality of its action on $\Sub_\Gamma$ and on 
$\Sub_G$. Note that any automorphism of such a $\Gamma$ extends to a unique automorphism of $G$ (cf.\ \cite{Rag}). 
Note also that such a $\Gamma$ is finitely generated and strongly root compact, and hence by Lemma \ref{cyc-closed}, 
$\Sub^c_\Gamma$ is closed. 

\begin{cor} \label{lattice-nilp}
Let $G$ be a connected simply connected nilpotent Lie group and let $\Gamma$ be a lattice in $G$. Let $T\in \Aut(G)$ 
be such that $T(\Gamma)=\Gamma$. Then the following are equivalent:
\begin{enumerate}
\item[{$(1)$}] $T|_\Gamma\in\NC$. 
\item[{$(2)$}] $T$ acts distally on $\Sub_\Gamma^a$.
\item[{$(3)$}] $T$ acts distally on $\Sub_\Gamma$.
\item[{$(4)$}] $T\in\NC$.
\item[{$(5)$}] $T$ acts distally on $\Sub_G^a$. 
\item[{$(6)$}] $T$ acts distally on $\Sub_G$. 
\item[{$(7)$}] $T$ acts distally on $\Sub_\Gamma^c$.
\item[{$(8)$}] $T^n=\Id$, where $\Id$ is the identity map. 
\end{enumerate}
\end{cor}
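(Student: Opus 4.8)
The plan is to split the eight conditions into a ``$\Gamma$-block'' $\{(1),(2),(3),(7)\}$ and a ``$G$-block'' $\{(4),(5),(6),(8)\}$, prove the internal equivalences of each block from the results already available, and then connect the two blocks through the unique extension of automorphisms of $\Gamma$ to $G$.

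First I would treat the $\Gamma$-block. As noted before the statement, $\Gamma$ is discrete, finitely generated and strongly root compact, so the set of roots $R_g$ is finite for every $g\in\Gamma$; thus Proposition~\ref{discr-subg} applies verbatim to the pair $(\Gamma, T|_\Gamma)$ and yields the equivalence of $(1)$, $(7)$, $(3)$ and the statement that $(T|_\Gamma)^n=\Id_\Gamma$ for some $n\in\N$. To include $(2)$ I would use the chain of $T$-invariant subspaces $\Sub^c_\Gamma\subseteq\Sub^a_\Gamma\subseteq\Sub_\Gamma$: distality of an action is inherited by invariant subspaces, so $(3)\Rightarrow(2)\Rightarrow(7)$, and since $(3)\Leftrightarrow(7)$ is already known, $(2)$ joins the block.

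Next, the $G$-block. I would run the cycle $(8)\Rightarrow(6)\Rightarrow(5)\Rightarrow(4)\Rightarrow(8)$. For $(8)\Rightarrow(6)$, if $T^n=\Id$ then $\{T^k\}_{k\in\Z}$ is a finite group of homeomorphisms of the compact Hausdorff space $\Sub_G$, and such a group is automatically distal, since each orbit is finite, hence closed, and misses the diagonal by injectivity of the $T^k$. The implication $(6)\Rightarrow(5)$ is again subspace inheritance along $\Sub^a_G\subseteq\Sub_G$, and $(5)\Rightarrow(4)$ is the observation, recorded before Lemma~\ref{discr-quo}, that distality on $\Sub^a_G$ forces $T\in\NC$. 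The two blocks are then bridged by $(T|_\Gamma)^n=\Id_\Gamma\Leftrightarrow(8)$: the forward direction uses that $T^n$ is an automorphism of $G$ extending $(T|_\Gamma)^n=\Id_\Gamma$, and since $\Id_G$ is the unique extension of $\Id_\Gamma$, we conclude $T^n=\Id_G$; the converse is immediate.

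The crux is the remaining implication $(4)\Rightarrow(8)$. Assume $T\in\NC$. Since $G$ is a simply connected nilpotent Lie group it is compactly generated, nilpotent, hence strongly root compact, and metrizable, so Lemma~\ref{mt} gives $\{x\in G\mid G_x\text{ is closed}\}\subseteq M(T)$. For a nontrivial $\gamma\in\Gamma$, simple connectivity and nilpotence place $\gamma$ on a closed one-parameter subgroup isomorphic to $\R$, inside which $G_\gamma=\langle\gamma\rangle\cong\Z$ is discrete and closed; hence $G_\gamma$ is closed in $G$ and $\gamma\in M(T)$, i.e.\ $\{T^k(\gamma)\}_{k\in\Z}$ is relatively compact in $G$. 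Because $T(\Gamma)=\Gamma$, this orbit lies in the discrete closed subgroup $\Gamma$, so relative compactness forces it to be finite; thus $T^{m}(\gamma)=\gamma$ for some $m\in\N$. Running this over a finite generating set of $\Gamma$ and taking the least common multiple of the periods yields $(T|_\Gamma)^n=\Id_\Gamma$, which by the bridge gives $(8)$. This closes the cycle, so all eight conditions are equivalent. I expect the verification that $G_\gamma$ is closed, together with the ``relatively compact inside a discrete set is finite'' step, to be the only genuinely substantive points; everything else is bookkeeping of inclusions and appeals to Proposition~\ref{discr-subg} and Lemma~\ref{mt}.
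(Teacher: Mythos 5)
Your proof is correct, but it is organised differently from the paper's, and the difference is worth noting. The paper funnels everything into statement (1): it declares the two chains $(8)\Rightarrow(6)\Rightarrow(5)\Rightarrow(4)\Rightarrow(1)$ and $(6)\Rightarrow(3)\Rightarrow(2)\Rightarrow(7)\Rightarrow(1)$ obvious, and then proves the single nontrivial implication $(1)\Rightarrow(8)$ exactly as you do your bridge: Proposition~\ref{discr-subg} applied to $\Gamma$ gives $(T|_\Gamma)^n=\Id$, and the unique-extension theorem (Theorem 2.11 of Raghunathan and its Corollary 1) upgrades this to $T^n=\Id$. Note that the paper's chains contain genuine $G$-to-$\Gamma$ restrictions, namely $(4)\Rightarrow(1)$ and $(6)\Rightarrow(3)$, whose ``obviousness'' rests on the fact that $\Sub_\Gamma$, with its own Chabauty topology, sits inside $\Sub_G$ as a closed invariant subspace (for discrete closed $\Gamma$ this needs a small argument with Lemma~\ref{discr-conv}). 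Your decomposition avoids this entirely: your $\Gamma$-block and $G$-block are internal to $\Gamma$ and to $G$ respectively, and they are linked only through the purely algebraic bridge $(T|_\Gamma)^n=\Id\Leftrightarrow T^n=\Id$. The price is that you must prove $(4)\Rightarrow(8)$ from scratch, which you do by applying Lemma~\ref{mt} to $G$ itself --- legitimate, since $G$ is compactly generated nilpotent, hence strongly root compact and metrizable --- together with the observations that each nontrivial $\gamma\in\Gamma$ generates a closed infinite cyclic subgroup of $G$ and that a relatively compact subset of the closed discrete group $\Gamma$ is finite. This essentially re-derives, inside $G$, the orbit-finiteness that the paper gets for free from Corollary~\ref{trivia} applied to the discrete group $\Gamma$. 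In short: the paper's route is more economical because it is willing to quote the $\Sub_\Gamma\hookrightarrow\Sub_G$ identification silently; yours is slightly longer but more self-contained on that point, and both ultimately rest on the same two pillars (Proposition~\ref{discr-subg}, i.e.\ Lemma~\ref{mt}, and Mal'cev-type unique extension of automorphisms from $\Gamma$ to $G$).
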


\begin{proof}
$(8)\implies (6)\implies (5) \implies (4) \implies (1)$ and $(6)\implies (3)\implies (2) \implies (7) \implies (1)$ are obvious. It is enough 
to show that $(1)\implies (8)$. Since $\Gamma$ is a lattice in a simply connected nilpotent group $G$, it is finitely generated, 
nilpotent and discrete. Therefore, $\Gamma$ is strongly root compact and by Proposition \ref{discr-subg}, 
$T^n|_\Gamma=(T|_\Gamma)^n=\Id$. By Theorem 2.11 of \cite{Rag} and Corollary 1 following it, 
$T^n=\Id$. 
\end{proof}

Example \ref{ex1} shows that Corollary \ref{lattice-nilp} does not hold for lattices in a general connected simply connected 
solvable Lie group and it also illustrates that the following theorem is the best possible result for lattices $\Gamma$ in a 
connected simply connected solvable Lie group. Note that such a lattice $\Gamma$ is torsion-free and every subgroup of 
it is finitely generated, and hence by Lemma \ref{fin-gen}, $\Sub^c_\Gamma$ is closed. The theorem can be viewed as a 
generalisation of Corollary \ref{lattice-nilp} as any automorphism of a lattice in a simply connected nilpotent group $G$ 
extends uniquely to that of $G$. Example \ref{ex1} also shows that not all the statements in the theorem below are equivalent. 

\begin{thm} \label{lattice-solv}
Let $G$ be a connected simply connected solvable Lie group. Let $N$ be the nilradical of $G$, the largest connected 
nilpotent normal subgroup of $G$. Suppose $G$ admits a lattice $\Gamma$ and an automorphism $T \in \Aut(G)$ 
such that $T(\Gamma)=\Gamma$. Then $(1-2)$ are equivalent as well as $(3-6)$ are equivalent.
\begin{enumerate}
\item[{$(1)$}] $T|_\Gamma\in\NC$.
\item[{$(2)$}] $T^n|_{\Gamma'}=(T|_{\Gamma'})^n=\Id$ for some $n\in\N$, where $\Gamma'$ is a normal subgroup of finite index 
in $\Gamma$ containing $\Gamma\cap N$, and $\Id$ is the identity map on $\Gamma'$. \item[{$(3)$}] $T$ acts distally 
on $\Sub^c_\Gamma$.
\item[{$(4)$}] $T$ acts distally on $\Sub^a_\Gamma$.
\item[{$(5)$}] $T$ acts distally on $\Sub_\Gamma$.
\item[{$(6)$}] $T^n|_\Gamma=(T|_\Gamma)^n=\Id$, for some $n\in\N$.
\end{enumerate}
\end{thm}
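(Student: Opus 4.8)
The plan is to prove the two equivalences separately, deriving a cocycle description of a suitable power of $T$ in the course of proving $(1)\Rightarrow(2)$ and then re-using it for $(3)\Rightarrow(6)$. First I would record the structure. As $N$ is the nilradical it is characteristic, so $T(N)=N$; since $[G,G]$ is a connected nilpotent normal subgroup, $[G,G]\subseteq N$ and $G/N$ is a simply connected abelian group, hence some $\R^d$. Put $M=\Gamma\cap N$; then $M$ is a $T$-invariant lattice in $N$, so a finitely generated torsion-free nilpotent group, strongly root compact by \cite{He}, while $\Gamma/M\cong\Gamma N/N$ is a finitely generated subgroup of $\R^d$, hence $\Gamma/M\cong\Z^k$ is torsion-free. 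Since $\Gamma$ is discrete, all its subgroups are finitely generated, so $\Sub^c_\Gamma$ is closed, hence compact, by Lemma~\ref{fin-gen}. I write $\T$ for the induced automorphism of $\Gamma/M$.

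For $(1)\Leftrightarrow(2)$: the implication $(2)\Rightarrow(1)$ is direct, since if $T^n|_{\Gamma'}=\Id$ with $r=[\Gamma:\Gamma']$, then for $x\ne e$ the nontrivial element $x^r\in\Gamma'$ has finite $T$-orbit, so along any unbounded $\{m_k\}$ some nontrivial element survives in $T^{m_k}(\langle x\rangle)$, giving $T|_\Gamma\in\NC$. For $(1)\Rightarrow(2)$ I would first pass to $M$ and to the quotient. Convergence to $\{e\}$ in a discrete group is computed by $\liminf$ (Lemma~\ref{discr-conv}) and is intrinsic to the cyclic subgroup generated, so $T|_M\in\NC$; as $M$ is strongly root compact, Proposition~\ref{discr-subg} gives $T^a|_M=\Id$. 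Since $\Gamma/M$ is torsion-free, Lemma~\ref{discr-quo} gives $\T\in\NC$, and Proposition~\ref{discr-subg} applied to $\Z^k$ gives $\T^{\,b}=\Id$. Setting $n_0=\mathrm{lcm}(a,b)$ and $S=T^{n_0}$, we have $S|_M=\Id$ and $S$ trivial on $\Gamma/M$. For $y\in\Gamma$ put $\theta(y)=y^{-1}S(y)\in M$; then $\theta(y)=e$ exactly when $y\in\Gamma^S:=\{y:S(y)=y\}$, and because $S$ fixes $M$ pointwise one computes $S^q(y)=y\,\theta(y)^q$ for all $q\in\Z$.

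The heart of the argument is to read off the Chabauty limits of $H_q:=S^q(\langle x\rangle)=\langle x\,\theta(x)^q\rangle$. Each element of $H_q$ equals $S^q(x^l)=x^l\theta(x^l)^q$ for a unique $l$; if $x^l\notin\Gamma^S$ then $\theta(x^l)\ne e$ and, $M$ being torsion-free, the elements $x^l\theta(x^l)^q$ are pairwise distinct in $q$, so such an element lies in $H_q$ for at most one $q$, whereas if $x^l\in\Gamma^S$ then $S^q(x^l)=x^l$ for every $q$. Thus a nontrivial element persists in $H_q$ along an unbounded set of $q$ precisely when some nonzero power of $x$ lies in $\Gamma^S$, in which case Lemma~\ref{conv} gives $H_q\to\langle x^m\rangle$, where $m\ge1$ is least with $x^m\in\Gamma^S$. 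Consequently $S\in\NC$ (equivalently $T|_\Gamma\in\NC$) forces every $x\in\Gamma$ to possess a nonzero power in $\Gamma^S$; projecting to $\Gamma/M\cong\Z^k$, every element of $\Z^k$ then has a nonzero multiple in $\Gamma^S/M$, so $\Gamma^S/M$ has full rank $k$ and $\Gamma^S$ has finite index in $\Gamma$. Taking $\Gamma'$ to be the normal core of $\Gamma^S$ (of finite index, and containing $M=\Gamma\cap N$ since $M$ is normal in $\Gamma$ and $M\subseteq\Gamma^S$) yields $T^{n_0}|_{\Gamma'}=\Id$, which is $(2)$.

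For the second equivalence, $(6)\Rightarrow(5)\Rightarrow(4)\Rightarrow(3)$ are immediate, as a finite-order map acts distally on all of $\Sub_\Gamma$ and distality descends to the $T$-invariant subspaces $\Sub^a_\Gamma\supseteq\Sub^c_\Gamma$. For $(3)\Rightarrow(6)$ I would first note that distality on $\Sub^c_\Gamma$ implies $T|_\Gamma\in\NC$ (a contracting cyclic subgroup would make the pair $(A,\{e\})$ non-distal), so the analysis above applies with $S=T^{n_0}$ and $\Gamma'=\Gamma^S$ of finite index. It remains to upgrade this to $S=\Id$. If $\theta(x)\ne e$ for some $x$, then $x\notin M$, so $x$ has infinite order, and the least $m$ with $x^m\in\Gamma^S$ satisfies $m>1$; the computation above gives $S^q(\langle x\rangle)\to\langle x^m\rangle$ as $q\to\infty$, while $S^q(\langle x^m\rangle)=\langle x^m\rangle$. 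Hence the closure of the $\{S^q\}$-orbit of the pair $\big(\langle x\rangle,\langle x^m\rangle\big)$ of distinct cyclic subgroups meets the diagonal, contradicting the distality of $S$ (a subgroup of the distal $\{T^q\}$) on $\Sub^c_\Gamma$. Therefore $\theta\equiv e$, i.e.\ $T^{n_0}|_\Gamma=\Id$, giving $(6)$. I expect the main obstacle to be the third paragraph: establishing rigorously, via Lemma~\ref{conv}, that $\langle x\,\theta(x)^q\rangle\to\langle x^m\rangle$ and that all other elements appear for boundedly many $q$ — this single computation simultaneously furnishes the finite-index conclusion for $(1)\Rightarrow(2)$ and the non-distal pair for $(3)\Rightarrow(6)$; care is also needed to check that $\NC$ restricts to $M$ and descends to $\Gamma/M$.
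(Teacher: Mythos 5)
Your proposal is correct and follows the same skeleton as the paper's proof: both split $\Gamma$ along the nilradical into $M=\Gamma\cap N$ and the torsion-free quotient $\Gamma/M\cong (\Gamma N)/N$, apply Proposition \ref{discr-subg} to $M$ (the paper routes this through Corollary \ref{lattice-nilp}), use Lemma \ref{discr-quo} plus Proposition \ref{discr-subg} on the quotient, and then analyse $S=T^n$, which fixes $M$ pointwise and acts trivially on $\Gamma/M$. The one substantive difference is the key step that every $x\in\Gamma$ has a nonzero power fixed by $S$: the paper outsources its contrapositive to Lemma 3.12 of \cite{SY3}, whereas you prove it directly via the cocycle identity $S^q(x^l)=x^l\theta(x^l)^q$ and the resulting Chabauty limits $S^q(G_x)\to G_{x^m}$ (or $\to\{e\}$ when no power is fixed). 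Since the paper essentially redoes that same computation anyway in its proof of $(3)\Rightarrow(6)$ (``it is easy to show that $S^i(G_x)\to G_{x^k}$''), your version unifies the two steps and makes the argument self-contained; your construction of $\Gamma'$ (normal core of the fixed-point subgroup $\Gamma^S$, with finite index obtained by a rank argument in $\Gamma/M\cong\Z^k$) differs only cosmetically from the paper's (the subgroup generated by $\Gamma\cap N$ and fixed powers of generators, normal because it contains $[\Gamma,\Gamma]$). One detail you should make explicit in your third paragraph, and which is exactly the care you flag: to exclude a single nontrivial element lying in $H_q$ and $H_{q'}$ with distinct $q,q'$, you must also rule out representations with different exponents $l\ne l'$; this is done by projecting to $\Gamma/M$ and using its torsion-freeness to pin down $l$ (when $x\notin M$; the case $x\in M$ is trivial since then $S$ fixes $G_x$ pointwise). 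With that observation inserted, the dichotomy holds and both of its applications, the finite-index conclusion in $(1)\Rightarrow(2)$ and the non-distal pair $\bigl(G_x, G_{x^m}\bigr)$ in $(3)\Rightarrow(6)$, go through.
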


\begin{proof} Suppose (2) holds. Let $S=T^n$ and let $m$ be the index of $\Gamma'$ in $\Gamma$. Let $x\in\Gamma$. 
Then $x^m\in \Gamma'$ and $S(x^m)=x^m$ and, $x^m\ne e$ as $G$ is torsion-free. It follows that any limit point $H$ of 
$\{S^i(G_x)\}$ contains a subgroup generated by $x^m$.  Therefore, $S|_\Gamma\in\NC$, and hence
$T|_\Gamma\in\NC$ and (1) holds. 

Now suppose (1) holds, i.e.\ $T|_\Gamma\in\NC$. 
As $G$ is connected, solvable and simply connected, $[G,G]$ is a closed connected nilpotent normal subgroup. 
Also, the nilradical $N$ is simply connected, $T(N)=N$, $[G,G]\subset N$ and $G/N$ is abelian. Moreover, 
$\Gamma\cap N$ (resp.\ $(\Gamma N)/N$) is a lattice in $N$ (resp.\ in $G/N$) (cf.\ \cite{Rag}, Corollary 3.5). Since 
$T$ keeps $\Gamma\cap N$ invariant and $T$ acts distally on $\Sub^c_{\Gamma\cap N}$, by Corollary \ref{lattice-nilp},
$T^{n_1}=\Id$ on $\Gamma\cap N$ for some $n_1\in \N$. Note that $(\Gamma N)/N$ is a lattice in $G/N$ which is simply 
connected and abelian. Note also that  $\Gamma/(\Gamma\cap N)$ is isomorphic to $(\Gamma N)/N$, therefore it is 
finitely generated, abelian and torsion-free. By Lemma \ref{discr-quo}, we get that $\T\in\NC$, where 
$\T\in\Aut (\Gamma/(\Gamma\cap N))$ is the automorphism corresponding to $T$. By Proposition \ref{discr-subg}, 
$\T^{n_2}=\Id$ on $\Gamma/(\Gamma\cap N)$ for some $n_2\in \N$. Let $n={\rm lcm}(n_1,n_2)$ and let $S=T^n$. 
Then $S|_{\Gamma\cap N}=\Id$ and $S$ acts trivially on $\Gamma/(\Gamma\cap N)$. 
Now suppose $x\in\Gamma$ is such that $S(x^j)\ne x^j$ for all $j\in\N$. Then $S(x)=xy$ for some nontrivial 
$y\in\Gamma\cap N$. As $\Gamma\cap N$ is torsion-free, 
by Lemma 3.12 of \cite{SY3}, we get that $S\not\in\NC$. Hence $T\not\in\NC$, which contradicts the hypothesis. Therefore, 
for every $x\in \Gamma$, there exists $j$ which depends on $x$ such that $S(x^j)=x^j$. As $\Gamma$ is finitely generated, 
there exist $x_1,\ldots,x_k$ which are generators for $\Gamma$. Let $j_i$ be such that $S(x_i^{j_i})=x_i^{j_i}$, $1\leq i\leq k$. 
Let $m={\rm lcm}(j_1,\ldots, j_k)$ and let $\Gamma'$ be the subgroup of $\Gamma$ generated by  
$\{x_1^{j_1},\ldots, x_k^{j_k}\}\cup(\Gamma\cap N)$. Since $\Gamma/\Gamma'$ is a finitely generated abelian group 
consisting of elements of finite order, it is finite. Therefore, $\Gamma'$ is a subgroup of finite index in $\Gamma$. Also, 
$\Gamma'$ is normal in $\Gamma$, since $[\Gamma,\Gamma]\subset \Gamma\cap N\subset \Gamma'$. 
As $S|_{\Gamma\cap N}=\Id$, we get that $S|_{\Gamma'}=\Id$. As $S=T^n$, it follows that (2) holds. 

It is easy to see that $(6)\implies (5)\implies (4)\implies (3)$. It is enough to show that $(3)\implies (6)$. Suppose (3) holds, 
i.e.\ $T$ acts distally on $\Sub^c_\Gamma$. Since $(3)\implies (1)$, we have that (2) holds, i.e.\ there exists a normal subgroup 
$\Gamma'$ of finite index in $\Gamma$ containing $\Gamma\cap N$ and $n\in\N$ such that $T^n|_{\Gamma'}=\Id$, where 
$N$ is the nilradical of $G$. As in the proof above, we can choose $\Gamma'$ and $n$ such that $T^n$ acts trivially on 
$\Gamma/(\Gamma\cap N)$, 

Let $S=T^n$. Then 
we show that $S|_\Gamma=\Id$. If possible, suppose $x\in\Gamma$ is such that $S(x)\ne x$. As $x^m\in\Gamma'$, for some 
$m\in\N$, we get that $S(x^m)=x^m$. Let $k$ be the smallest positive integer such that $S(x^k)=x^k$. Then $k\geq 2$. Now 
$S(x^l)=x^ly_l$ for some $y_l\in\Gamma\cap N$, $y_l\ne e$ and $S^i(x^l)=x^ly_l^i$ for all for $1\le l\le k-1$, $i\in\N$. Since 
$G$ is torsion-free. $\{S^{i_j}(x^l)\}$ has no limit point if $i_j\to\infty$ and $1\le l\le k-1$. Now it is easy to show that 
$S^i(G_x)\to G_{x^k}$ in $\Sub^c_\Gamma$, as $i\to\infty$, where $G_x$ (resp.\ $G_{x^k}$) is the cyclic group 
generated by $x$ (resp.\ $x^k$) in $\Gamma$. As $S(G_{x^k})=G_{x^k}$ and $k\geq 2$, it implies that $S$ does not 
act distally on $\Sub^c_\Gamma$. Since $S=T^n$, we get that $T$ does not act distally on $\Sub^c_\Gamma$. 
This contradicts (3). Therefore, $S|_\Gamma=\Id$, and hence (6) holds.
 \end{proof}

The following is an example of a connected simply connected solvable Lie group $G$ which admits a nontrivial automorphism 
$T$ and a lattice $\Gamma_1$ such that $T|_{\Gamma_1}=\Id$ and $T\not\in\NC$. This is unlike the case of simply connected 
nilpotent groups (see Corollary \ref{lattice-nilp}). The example also shows that there exists a lattice $\Gamma_2$ in $G$ 
such that $T|_{\Gamma_2}\in\NC$ but it does not act distally on $\Sub^c_{\Gamma_2}$. 

\begin{example} \label{ex1}
Let $G=\R\ltimes\R^2$ where the group operation is given by $(s,x)(t,y)=(s+t, e^{2i\pi t}x+y)$, $s,t\in \R$ and $x,y\in\R^2$. 
Then $G$ is a connected simply connected solvable Lie group. Let $T$ be an inner automorphism by some 
$g\in\Z^2\setminus\{0\}$, i.e.\ $T(t,y)=(t, y+e^{2i\pi t}g-g)$, for all $(t,y)$ as above. Let $\Gamma_1=\Z\times\Z^2$, 
where $\Z$ is a lattice in $\R$ and $\Z^2$ is a lattice in the normal subgroup $\R^2$. Then $\Gamma_1$ is a lattice in $G$ 
and $T|_{\Gamma_1}=\Id$. Also, $T|_{\R^2}=\Id$ and the action on $G/\R^2$ corresponding to $T$ is also trivial. Now choose 
an irrational number $t$ in $\R$. Then $T(t)=(t,e^{2i\pi t}g-g)$, and hence $T(mt)\ne mt$ for all $m\in\Z$, i.e.\ $T$ does not fix any 
nontrivial element in the discrete cyclic group $G_t$ generated by $t$ in $\R$. As $\R^2$ has no nontrivial compact subgroup, 
it is easy to show that $T^n(G_t)\to\{(0,0)\}$ in $\Sub_G$ as $n\to\infty$. Therefore, $T\not\in\NC$ 
(this also follows from Lemma 3.12 of \cite{SY3}). 

Now choose $\Gamma_2=\frac{1}{2}\Z\ltimes \Z^2$ and $T$ is the inner automorphism by $g$ as above, where 
$g\in\Gamma_1\cap \Z^2\setminus\{0\}$. 
Then $\Gamma_2$ is a $T$-invariant lattice in $G$, $\Gamma_1\subset\Gamma_2$ and $T|_{\Gamma_1}=\Id$. 
For any $x\in\Gamma_2$, $x^2\in\Gamma_1$. Therefore, it is easy to see that $T|_{\Gamma_2}\in\NC$. For 
$t=\frac{1}{2}\in\Gamma_2\cap\R$, $T(t)=(t,-2g)$. As $g\ne 0$, it is easy to check that 
$T^n(G_t)\to \Z=\Gamma_1\cap\R$ as $n\to\infty$, where $G_t$ is the cyclic group generated by $t$ in $\Gamma_2$. 
As $\Gamma_1\cap\R$ is cyclic and $T(\Gamma_1\cap\R)=\Gamma_1\cap\R\ne G_t$, $T$ does not act distally on 
$\Sub^c_{\Gamma_2}$.
\end{example}

Now we study the action of automorphisms of a lattice $\Gamma$ in a connected semisimple Lie group on 
$\Sub_\Gamma$. We first give an example of an automorphism $T$ of $\SL(2,\Z)$, which does not belong to $\NC$, and 
hence it does not act distally on $\Sub^a_{SL(2,\Z)}$.

\begin{example} \label{ex2}
 Let $T=\inn(g)$, the inner automorphism of $\SL(2,\Z)$, where
 $$g=\begin{bmatrix}
1&1\\
0&1\\
\end{bmatrix}. 
\ \ \ \mbox{For} \ \ \ x=\begin{bmatrix}
1&0\\
1&1\\
\end{bmatrix}$$ 
and, for $\{n_k\}$ and $\{l_k\}$ in $\Z$,
$$T^{n_k}(x^{l_k})=g^{n_k}x^{l_k}g^{-n_k}=
\begin{bmatrix}
  1+n_kl_k & -n_k^2l_k\\
     l_k     &  1-n_kl_k\\
\end{bmatrix}. 
$$
If $n_k\to \infty$ and $l_k\ne 0$, at least one of the entries of $T^{n_k}(x^{l_k})$ goes to $\infty$, and hence 
$\{T^{n_k}(x^{l_k})\}$ does not converge in $SL(2,\Z)$. This implies that for the cyclic group $G_x$ generated
by $x$ in $\SL(2,\Z)$,  since $\{T^{n_k}(G_x)\}$ converges for some unbounded sequence $\{n_k\}\subset\N$, 
we have that $T^{n_k}(G_x)\to\{e\}$. Therefore, $T\not\in\NC$, and hence $T$ does not act distally on $\Sub^a_{\SL(2,\Z)}$.
\end{example}

The question arises for a lattice $\Gamma$ in $G$, if the action of automorphisms of $\Gamma$ on $\Sub_\Gamma$ for 
a connected semisimple group $G$ behave in the same way or differently from the case when $G$ is a simply connected 
nilpotent Lie group. Theorem \ref{newp} shows that an almost similar result as above hold in this case too. 

We first state and prove some lemmas to use later.  The following lemma may be known but we give a proof for 
the sake of completeness. An element $g\in\GL(n,\R)$ is said to be {\it net} if the multiplicative group generated by 
the eigenvalues of $g$ in $\C\setminus\{0\}$ is torsion-free. Note that $g$ is net if and only if $g_s$ is net, where 
$g_s$ is the semisimple part of $g$ in its multiplicative Jordan decomposition. A subgroup of $\GL(n,\R)$ is said to 
be {\it net} if all its elements are net (see 17.1 in \cite{B}). 

\begin{lem} \label{newl} Let $G$ be a connected semisimple Lie group and let $\Gamma$ be a lattice in $G$. Then there 
exists a normal subgroup $\Gamma'$ of finite index in $\Gamma$ such that 
$R_g:=\{x\in \Gamma' \mid x^n=g\mbox{ for some } n\in\Z\}$ is finite for all $g\in\Gamma'$. Moreover, the torsion elements 
in $\Gamma'$ form a finite central subgroup in $G$. 
\end{lem}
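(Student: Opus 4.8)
The plan is to linearise $\Gamma$ through the adjoint representation, pass to a net subgroup of finite index, and then read off both conclusions from the defining property of net groups together with the discreteness of $\Gamma$ in $G$. First I would consider $\Ad\colon G\to\GL(\mathfrak g)$, whose kernel is the discrete centre $Z(G)$. Since $\Gamma$ is a lattice in a connected semisimple Lie group, it is finitely generated (cf.\ \cite{Rag}), so $\Ad(\Gamma)$ is a finitely generated linear group over $\R$, to which Selberg's Lemma and its refinement producing net subgroups (see \S 17 of \cite{B}) apply. Thus $\Ad(\Gamma)$ contains a net subgroup of finite index; intersecting its finitely many conjugates I may take it normal, say $\Delta$, and set $\Gamma'=\Ad^{-1}(\Delta)\cap\Gamma$. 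Then $\Gamma'$ is normal of finite index in $\Gamma$, and $\Ad(\Gamma')=\Delta$ is net, hence torsion-free: the eigenvalues of a torsion element are roots of unity lying in a torsion-free group, so they all equal $1$, the element is unipotent, and a unipotent element of finite order is trivial.

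For the \emph{moreover} statement, let $t\in\Gamma'$ be torsion. Then $\Ad(t)$ is torsion in the torsion-free group $\Delta$, so $\Ad(t)=\mathrm{Id}$ and $t\in Z(G)$. Hence every torsion element of $\Gamma'$ is central, and the torsion elements constitute the torsion subgroup of the abelian group $\Gamma'\cap Z(G)$. As $Z(G)$ is discrete and $\Gamma'$ is finitely generated, $\Gamma'\cap Z(G)$ is a finitely generated abelian group, so its torsion subgroup is finite; this is the required finite central subgroup.

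It remains to show $R_g$ is finite for every $g\in\Gamma'$, split according to whether $g$ is central. If $g\in Z(G)$, then any root $x$ with $x^n=g$ has $\Ad(x)$ torsion in $\Delta$, so $\Ad(x)=\mathrm{Id}$ and $x\in\Gamma'\cap Z(G)$; thus $R_g$ is the set of roots of $g$ inside the finitely generated abelian group $\Gamma'\cap Z(G)$, which is finite (a nonzero vector in $\Z^r$ admits roots of only boundedly many exponents, and the finite torsion part contributes finitely). If $g\notin Z(G)$, then $g$ has infinite order by the previous paragraph, every $x\in R_g$ commutes with $g$ and so lies in the closed subgroup $Z_G(g)$, and each such $x$ has infinite order. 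Writing the multiplicative Jordan decomposition $x=x_e x_h x_u$ into commuting elliptic, hyperbolic and unipotent parts, the relation $x^n=g$ gives $g_h=x_h^n$ and $g_u=x_u^n$, so the Cartan projection of $x$ is that of $g$ scaled by $1/n$; hence the hyperbolic and unipotent contributions of the roots remain in a fixed compact set as $n$ varies. A purely elliptic element of $\Gamma'$ lies in a compact subgroup of $G$, hence generates a finite group, hence is central and so excluded here; this controls the elliptic parts as well. Therefore $R_g$ is relatively compact in $G$, and since $\Gamma'$ is discrete, $R_g=\overline{R_g}\cap\Gamma'$ is finite.

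The genuine obstacle is the relative compactness of $R_g$ for noncentral $g$: one must rule out, uniformly in the exponent $n$, that an $n$-th root of a fixed noncentral $g$ escapes to infinity in $G$. I would establish this via the multiplicative Jordan decomposition in the semisimple group together with the near-linear scaling of the Cartan projection under taking powers, using the net property precisely to eliminate infinite-order elliptic roots, which would otherwise be unbounded. Once relative compactness is secured, discreteness of $\Gamma'$ closes the argument, and combining the central and noncentral cases yields the finiteness of $R_g$ for all $g\in\Gamma'$.
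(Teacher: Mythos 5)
Your construction of $\Gamma'$, your proof of the \emph{moreover} statement, and your treatment of central $g$ all coincide with the paper's argument (the paper linearises via the faithful representation of $G/Z(G)$ rather than via $\Ad$, which is the same thing, and obtains the net finite-index subgroup from Corollary 17.7 of \cite{B}). One small repair there: $\Gamma'\cap Z(G)$ is finitely generated not ``because $\Gamma'$ is finitely generated'' --- subgroups of finitely generated groups need not be finitely generated --- but because $Z(G)$ itself is a compactly generated discrete abelian group, which is the fact the paper invokes.

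The noncentral case, however, contains a genuine gap, precisely at the point you yourself call ``the genuine obstacle,'' and the mechanism you propose for closing it does not work. The Jordan decomposition does pin down the hyperbolic and unipotent parts of a root ($x_h=g_h^{1/n}$ and $x_u=g_u^{1/n}$ lie in fixed compact sets), but it gives no control on the elliptic part $x_e$, and $x_e$ is in general \emph{not} an element of $\Gamma'$: the Jordan components of a lattice element rarely lie in the lattice. Hence your observation that purely elliptic elements of $\Gamma'$ are torsion, hence central, cannot be applied to $x_e$. Netness cannot substitute either: it is a condition on the eigenvalues of $\Ad(x)$, which are conjugation-invariant, whereas unboundedness of elliptic parts is a conjugation phenomenon. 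Indeed the set of roots of $g$ in the ambient group $G$ is genuinely unbounded in general: for $G=\SL(2,\R)\times\SL(2,\R)$ and $g=(h,e)$ with $h$ hyperbolic, the element $\bigl(h^{1/n},\,aRa^{-1}\bigr)$ is an $n$-th root of $g$ for every rotation $R$ with $R^n=e$ and \emph{every} $a\in\SL(2,\R)$. So no argument that constrains roots only through their Jordan data can yield relative compactness of $R_g$; finiteness must exploit the group structure of $\Gamma'$, which your outline never does. This is exactly the idea the paper supplies: since $\Delta=\Ad(\Gamma')$ is net, the Zariski closure of the cyclic group generated by $\bar x=\Ad(x)$ is \emph{connected}; as $\langle\bar g\rangle$ has finite index in $\langle\bar x\rangle$, the two Zariski closures coincide, so every root of $g$ lies in $H_g=\Ad^{-1}\bigl(\Delta\cap\tilde G_{\bar g}\bigr)\cap\Gamma$, where $\tilde G_{\bar g}$ is the Zariski closure of $\langle\bar g\rangle$; this $H_g$ is a finitely generated nilpotent group depending only on $g$, and finitely generated nilpotent groups are strongly root compact, so by Theorem 3.1.13 of \cite{He} the set $R_g\subset H_g$ is finite. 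Without an argument of this kind --- trapping all roots of $g$ in a single finitely generated nilpotent subgroup --- your Jordan-theoretic outline cannot be completed.
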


\begin{proof} For the center $Z(G)$ of $G$, $G/Z(G)$ is a linear subgroup 
of $\GL(n,\R)$, for some $n\in\N$. Let $\pi:G\to G/Z(G)$ be the natural projection. Since $\Gamma$ is a lattice in $G$, it is 
finitely generated, Hence $\pi(\Gamma)$ is finitely generated, and by Corollary 17.7 of \cite{B}, $\pi(\Gamma)$ has a subgroup 
of finite index (say) $\Gamma_1$ such that it is net. Let $\Gamma'=\pi^{-1}(\Gamma_1)\cap \Gamma$. Then $\Gamma'$ is 
a subgroup of finite index in $\Gamma$. 

For $x\in \Gamma$, let $\bar x=\pi(x)$ and let $G_x$ (resp.\ $G_{\bar x}$) be the cyclic group generated by $x$ in 
$\Gamma$ (resp.\ $\bar x$ in $\pi(\Gamma)$). Then for any $x\in\Gamma'$, the Zariski closure $\tilde G_{\bar x}$ of 
$G_{\bar x}$ is connected (see the proof of Proposition 17.2 in \cite{B}). 

Let $g\in \Gamma'$. First suppose that $g\in Z(G)$. Then $\pi(R_g)$ is the group of torsion elements in $\Gamma_1$. 
Since $\Gamma_1$ is net, it is torsion-free, and hence $\pi(R_g)=\{\bar e\}$. Therefore, $R_g\subset Z(G)$. As $Z(G)$ 
is compactly generated and abelian, $R_g$ is finite. This implies in particular that the set of torsion elements in $\Gamma'$
is a subgroup of $Z(G)$, and hence it is finite since $Z(G)$ is compactly generated and abelian. 

Now suppose $g\not\in Z(G)$ and let $x\in R_g$. Then $\bar g,\bar x\in\Gamma_1$ are nontrivial and $G_{\bar g}$ is 
a subgroup of finite index in $G_{\bar x}$, and since each of them have connected Zariski closure, we get that 
$\tilde G_{\bar g}=\tilde G_{\bar x}$. That is, $\bar x\in\tilde G_{\bar g}$. Let $H_g=\pi^{-1}(\Gamma_1\cap \tilde G_{\bar g})$. 
Since $\tilde G_{\bar g}$ is connected and abelian and since $Z(G)$ is finitely generated and central in $G$, we get that 
$H_g$ is a finitely generated nilpotent group. From above, we have that $R_g\subset H_g$. Since $H_g$ is finitely generated 
and nilpotent, it is strongly root compact and by Theorem 3.1.13 of \cite{He}, $R_g$ is finite. 

Replacing $\Gamma'$ by a smaller subgroup of finite index if necessary, we may assume that $\Gamma'$ is normal. 
\end{proof}

Now we can deduce the following: 

\begin{lem} \label{new2} Let $\Gamma$ be a lattice in a connected semisimple Lie group $G$. Then $\Sub^c_\Gamma$ is closed. 
\end{lem}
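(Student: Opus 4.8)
The plan is to reduce to the case covered by Lemma \ref{cyc-closed} by passing to the finite-index subgroup $\Gamma'$ produced in Lemma \ref{newl}. Recall that Lemma \ref{cyc-closed} guarantees $\Sub^c_H$ is closed for any discrete group $H$ in which the set of roots $R_g$ of every element $g$ is finite, and Lemma \ref{newl} gives a normal finite-index subgroup $\Gamma'\le\Gamma$ with exactly this root-finiteness property. So $\Sub^c_{\Gamma'}$ is closed in $\Sub_{\Gamma'}$. The task is to transfer this closedness from $\Gamma'$ up to $\Gamma$, where elements of $\Gamma\setminus\Gamma'$ need not have finite root sets a priori.

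First I would take a sequence $\{H_n\}\subset\Sub^c_\Gamma$ with $H_n\to H$ in $\Sub_\Gamma$ and, using Lemma \ref{discr-conv}, identify $H=\liminf H_n=\bigcup_k G_k$ where $G_k=\bigcap_{n\ge k}H_n$ is an increasing chain of cyclic groups. The key observation is that each $H_n$ is cyclic, so $H_n\cap\Gamma'$ is a subgroup of the cyclic group $H_n$ and hence is itself cyclic; thus $\{H_n\cap\Gamma'\}\subset\Sub^c_{\Gamma'}$. I would next check that intersecting with the fixed open (and closed) finite-index subgroup $\Gamma'$ is continuous on $\Sub_\Gamma$ in the relevant sense — concretely, that $H_n\to H$ forces $H_n\cap\Gamma'\to H\cap\Gamma'$ in $\Sub_{\Gamma'}$. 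Since $\Gamma'$ is open, this follows directly from the $\liminf$ description: $H\cap\Gamma'=\bigcup_k(G_k\cap\Gamma')=\liminf(H_n\cap\Gamma')$. Because $\Sub^c_{\Gamma'}$ is closed, the limit $H\cap\Gamma'$ is cyclic.

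Now I would argue that $H$ itself is cyclic. Since $\Gamma'$ has finite index $m$ in $\Gamma$, we have $x^m\in\Gamma'$ for every $x\in H$, so $H\cap\Gamma'$ is a finite-index (indeed index dividing $m$) subgroup of $H$. A group $H$ containing a cyclic finite-index subgroup $C=H\cap\Gamma'$ is itself virtually cyclic; the remaining point is to upgrade ``virtually cyclic'' to ``cyclic.'' If $C$ is trivial then $H$ is finite, and being a subgroup of $\Gamma$ whose torsion elements lie in a finite group (again by Lemma \ref{newl}) it is finite cyclic after possibly passing to the structure of $H$ as a union of the increasing cyclic chain $G_k$, which stabilises once it becomes finite. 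If $C$ is infinite cyclic, then $H$ is a finitely generated virtually-$\Z$ torsion-aware group; here I would invoke that $H=\bigcup_k G_k$ is an increasing union of cyclic groups, so $H$ is a directed union of cyclic subgroups and is therefore locally cyclic, and a finitely generated locally cyclic group is cyclic.

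The main obstacle is the final upgrade from virtually cyclic to cyclic: one must rule out $H$ being an infinite dihedral-type or other noncyclic virtually-$\Z$ group. The cleanest route, which I expect to carry the argument, is to avoid group-structure theory entirely and lean on the $\liminf$ representation $H=\bigcup_k G_k$ with $G_k\subset G_{k+1}$ all cyclic: this makes $H$ a union of a chain of cyclic groups, hence locally cyclic, and then finite generation of $H$ (inherited because $H\cap\Gamma'$ is cyclic of finite index, so $H$ is finitely generated) forces the chain to stabilise, giving $H=G_{n_0}$ cyclic for some $n_0$. This mirrors the stabilisation arguments already used in the proofs of Lemmas \ref{fin-gen} and \ref{cyc-closed}, so the technical work reduces to verifying finite generation of $H$ and the stabilisation of the chain, both of which are routine once the virtually-cyclic structure is in hand.
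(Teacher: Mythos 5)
Your proposal is correct and takes essentially the same route as the paper's proof: pass to the normal finite-index subgroup $\Gamma'$ of Lemma \ref{newl} so that Lemma \ref{cyc-closed} makes $\Sub^c_{\Gamma'}$ closed, show the limit $H$ has $H\cap\Gamma'$ cyclic of finite index (hence $H$ is finitely generated), and conclude via stabilisation of the increasing chain $G_k=\bigcap_{n\geq k}H_n$ that $H$ lies in some cyclic $H_k$ and is therefore cyclic. Your intermediate worry about upgrading ``virtually cyclic'' to ``cyclic'' is resolved exactly by the chain argument you settle on, which is the paper's own final step, so no extra group-structure theory is needed.
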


\begin{proof} By Lemma \ref{newl}, there exists a normal subgroup $\Gamma'$ of finite index in $\Gamma$ 
such that for every $g\in\Gamma'$, the set $R_g$ of roots of $g$ in $\Gamma'$ is finite. By Lemma \ref{cyc-closed}, we get that 
$\Sub^c_{\Gamma'}$ is closed. If $\Gamma=\Gamma'$, then $\Sub^c_\Gamma=\Sub^c_{\Gamma'}$ and it is closed. 
Now suppose $\Gamma'$ is proper subgroup of $\Gamma$. 

Let $\{H_n\}_{n\in\N}$ be a sequence in $\Sub^c_\Gamma$ such that $H_n\to H$ in $\Sub_\Gamma$. We need to show that 
$H$ is cyclic. Let $H_n'=H_n\cap \Gamma'$, $n\in\N$ and let $H'=H\cap\Gamma'$. Then $H'_n\in \Sub^c_{\Gamma'}$ and it 
follows by Lemma \ref{discr-conv} that $H'_n\to H'$. From above we have that $H'$ is cyclic. Since $H\Gamma'/\Gamma'$ is finite 
and isomorphic to $H/(H\cap\Gamma')$. we have that $H'=H\cap\Gamma'$ is a normal subgroup of finite index in $H$, and hence 
$H$ is finitely generated. Let $\{h_1,\ldots, h_m\}$ be a set of generators in $H$. Then there exists $k\in\N$ such that for 
$1\leq i\leq m$, $h_i\in \bigcap_{n=k}^\infty H_n\subset H_k$. Therefore, $H\subset H_k$ and hence $H$ is cyclic. 
\end{proof}

The following lemma should be known; we give a short proof for the sake of completeness. Recall that for a group $H$, $Z(H)$ 
denotes the center of $H$. 

\begin{lem} \label{newl-center} Let $G$ be a connected semisimple Lie group, $\Gamma$ be a lattice in $G$ and let $\Gamma'$ 
be a subgroup of finite index in $\Gamma$. Then $Z(\Gamma)\cap\Gamma'$ is a subgroup of finite index in $Z(\Gamma')$. 
\end{lem}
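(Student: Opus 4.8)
The plan is to first dispose of the trivial inclusion and then reduce the statement to a single assertion about normal abelian subgroups of $\Gamma$, which is exactly where semisimplicity enters. Since $\Gamma'\subseteq\Gamma$, every element of $Z(\Gamma)\cap\Gamma'$ commutes with all of $\Gamma'$, so $Z(\Gamma)\cap\Gamma'\subseteq Z(\Gamma')$; the whole content is the finiteness of this index. I would replace $\Gamma'$ by its normal core $N=\bigcap_{\gamma\in\Gamma}\gamma\Gamma'\gamma^{-1}$, a finite-index normal subgroup of $\Gamma$ with $N\subseteq\Gamma'$. Three elementary observations then organise the argument: (i) $[\Gamma':N]<\infty$, so $Z(\Gamma')\cap N$ has finite index in $Z(\Gamma')$; (ii) $Z(\Gamma')\cap N\subseteq Z(N)$, since such an element lies in $N$ and centralises $\Gamma'\supseteq N$; and (iii) $Z(N)$ is a normal abelian subgroup of $\Gamma$, because $N\trianglelefteq\Gamma$. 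Thus it suffices to prove the following Crux: $Z(N)\cap Z(\Gamma)$ has finite index in $Z(N)$. Assembling (i)--(iii) then exhibits a finite-index subgroup of $Z(\Gamma')$ inside $Z(\Gamma)\cap\Gamma'$.

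For the Crux, $\Gamma$ acts on $Z(N)$ by conjugation (well defined by (iii)), and $N$ acts trivially, so the action factors through the finite group $\Gamma/N$; moreover $Z(N)\cap Z(\Gamma)=Z(N)^{\Gamma}$ is precisely the fixed-point subgroup. Finiteness of $\Gamma/N$ alone is not enough here---an order-two inversion of $\Z$ has trivial fixed set---so semisimplicity must be used to show that the induced rational representation on $Z(N)\otimes\Q$ is trivial. I would do this through the Borel Density Theorem. Let $G_c$ be the product of the compact simple factors of $G$, a compact connected normal subgroup, and let $p\colon G\to\bar G:=G/G_c$ be the quotient; then $\bar G$ is semisimple without compact factors and $p(N)$ is a lattice in it. By Borel density (cf.\ \cite{Rag}, \cite{B}), $\Ad\,(p(N))$ is Zariski dense in the adjoint group, so its centraliser is the centre, giving $p(Z(N))\subseteq Z(\bar G)$.

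Now $Z(\bar G)$ is central in $\bar G\supseteq p(\Gamma)$, so $p(\Gamma)$ acts trivially on $p(Z(N))$; equivalently, for every $\gamma\in\Gamma$ and $z\in Z(N)$ the commutator $[\gamma,z]$ lies in $\ker p\cap Z(N)=Z(N)\cap G_c$. This last group is discrete (being contained in $\Gamma$) and relatively compact (being contained in $G_c$), hence finite; and $Z(N)$ is finitely generated abelian, since $p$ maps it with finite kernel into the finitely generated abelian group $Z(\bar G)$. Consequently each $\gamma\in\Gamma$ acts trivially on $Z(N)\otimes\Q$, so $Z(N)^{\Gamma}$ has full rank and therefore finite index in $Z(N)$, which proves the Crux.

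The main obstacle is exactly the presence of compact factors. Without them, Borel density gives the sharper $Z_G(N)=Z(G)$ and in fact the equality $Z(\Gamma)\cap\Gamma'=Z(\Gamma')$; the entire difficulty---and the only reason the index is merely finite rather than $1$---is in showing that the compact-factor contribution $Z(N)\cap G_c$ is finite and that the residual $\Gamma/N$-action on $Z(N)$ is rationally trivial. Handling the quotient by $G_c$ and the Zariski-density input carefully is the step I expect to require the most care.
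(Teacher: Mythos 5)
Your proof is correct and follows essentially the same route as the paper's: both pass to a normal finite-index subgroup, project modulo the compact factors, invoke the centraliser-of-a-lattice theorem (Raghunathan, Corollary 5.18, i.e.\ Borel density) in the quotient to conclude that the relevant central elements have commutators with $\Gamma$ lying in the finite group (centre $\cap$ compact part), and then finish with finitely generated abelian group considerations. The only differences are organisational: you take the normal core at the outset and conclude via triviality of the action on $Z(N)\otimes\Q$, whereas the paper treats the normal case first with the explicit bounded-exponent identity $(xgx^{-1}g^{-1})^m = x^mgx^{-m}g^{-1}$ and reduces the general case to it at the end.
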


\begin{proof} 
First suppose that $G$ has no compact factors i.e.\ the maximal compact connected normal subgroup of $G$ is 
trivial. By Corollary 5.18 of \cite{Rag}, $Z(\Gamma')\subset Z(G)$, the center of $G$. Hence $Z(\Gamma')\subset Z(\Gamma)$. 
Now suppose $G$ has a nontrivial compact factor. Let $K$ the largest compact connected 
normal subgroup of $G$. If $G=K$, then $\Gamma$ and $\Gamma'$ are finite and the assertion follows trivially. Suppose $G$ is 
not compact. Then $G/K$ is semisimple and it has no compact factors. Let $\psi:G\to G/K$ be the natural projection. Then 
$\psi(\Gamma)$ and $\psi(\Gamma')$ are lattices in $G/K$. From above, we have that 
$\psi(Z(\Gamma'))\subset Z(\psi(\Gamma'))\subset Z(G/K)$. Therefore, $xgx^{-1}g^{-1}\in K$ for all $g\in G$ and 
$x\in Z(\Gamma')$, Fix $x\in Z(\Gamma')$ and let $g\in\Gamma$. 

We first assume that $\Gamma'$ is normal in $\Gamma$. As $Z(\Gamma')$ is normal in $\Gamma$, 
$xgx^{-1}g^{-1}\in Z(\Gamma')\cap K$; this is a finite abelian group. Let $m$ be the order of $Z(\Gamma')\cap K$. 
Since $Z(\Gamma')$ is abelian, we get that $(xgx^{-1}g^{-1})^m=x^mgx^{-m}g^{-1}=e$. Therefore, $x^m\in Z(\Gamma)$. 

Note that the center of any connected semisimple Lie group is compactly generated. Therefore, 
the center of any lattice in $G$ is compactly generated as its image in $G/K$ is central in $G/K$, where $K$ as above is compact. 
Since $Z(\Gamma')$ is compactly generated and abelian and $x^m\in Z(\Gamma)$ for every $x\in Z(\Gamma')$, we have that 
$Z(\Gamma')/(Z(\Gamma)\cap \Gamma')$ is finite. 

Now suppose $\Gamma'$ is not normal in $\Gamma$, there exists a normal subgroup $\Gamma''$ of finite index in $\Gamma$ 
such that $\Gamma''\subset \Gamma'$. From the above discussion, we have that $Z(\Gamma'')/(Z(\Gamma)\cap\Gamma'')$ 
and $Z(\Gamma'')/(Z(\Gamma')\cap \Gamma'')$ are finite. Since $Z(\Gamma')/(Z(\Gamma')\cap \Gamma'')$ is  also finite, it is 
easy to deduce that $Z(\Gamma')/(Z(\Gamma)\cap \Gamma')$ is finite. 
\end{proof}

Using the above lemmas we can prove the following result which was suggested by an anonymous referee along with a sketch of a proof. 

\begin{thm} \label{newp} Let $G$ be a connected semisimple Lie group and let $\Gamma$ be a lattice in $G$. 
Let $T\in\Aut(\Gamma)$. Then the following statements are equivalent:
\begin{enumerate}
\item[{$(1)$}] $T\in\NC$.
\item[{$(2)$}] $T$ acts distally on $\Sub^c_\Gamma$. 
\item[{$(3)$}] $T$ acts distally on $\Sub_\Gamma$.
\item[{$(4)$}] $T^n=\Id$ for some $n\in\N$. 
\item[{$(5)$}] $T^n|_{\Gamma'}=\Id$ for some $n\in\N$, where $\Gamma'$ is a subgroup of finite index in $\Gamma$. 
\end{enumerate}
If $T=S|_\Gamma$ for some $S\in \Aut(G)$, then $(1-5)$ are equivalent to each of the following statements: 
\begin{enumerate}
\item[{$(6)$}] $S$ acts distally on $\Sub_G$. 
\item[{$(7)$}] $S$ is contained in a compact subgroup of $\Aut(G)$.
\end{enumerate}
Moreover, if $G$ has no compact factors, then $(1-7)$ are equivalent to the following: 
\begin{enumerate}
\item[{$(8)$}]$S^n=\Id$ for some $n\in\N$. 
\end{enumerate}
\end{thm}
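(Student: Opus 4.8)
The plan is to first prove that $(1$--$5)$ are equivalent by a purely group-theoretic analysis of $\Gamma$, and then to bootstrap the statements $(6)$--$(8)$ using Borel's density theorem and the structure of $\Aut(G)$. Throughout I would fix once and for all a \emph{characteristic} finite-index subgroup $\Gamma'$ of $\Gamma$ that is net: starting from the net finite-index subgroup $\Gamma_{\mathrm{net}}$ furnished by Lemma~\ref{newl}, take $\Gamma'$ to be the intersection of all subgroups of $\Gamma$ of index at most $[\Gamma:\Gamma_{\mathrm{net}}]$. As $\Gamma$ is finitely generated this is a finite intersection, so $\Gamma'$ is characteristic (hence $T$-invariant and normal), of finite index, and contained in $\Gamma_{\mathrm{net}}$; being a subgroup of a net group it is net, so $R_g$ is finite for every $g\in\Gamma'$. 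The implications $(4)\Rightarrow(3)\Rightarrow(2)\Rightarrow(1)$ are then routine: a finite-order $T$ generates a finite group of homeomorphisms of the compact space $\Sub_\Gamma$, which acts distally; distality descends to the closed $T$-invariant subspace $\Sub^c_\Gamma$ (closed by Lemma~\ref{new2}); and testing distality on $\Sub^c_\Gamma$ against the trivial subgroup $\{e\}$ yields $T\in\NC$. Likewise $(4)\Rightarrow(5)$ is immediate. For $(1)\Rightarrow(5)$ I would restrict to $\Gamma'$: a cyclic subgroup of $\Gamma'$ is cyclic in $\Gamma$, and Chabauty limits inside the discrete group $\Gamma'$ agree with those in $\Gamma$ (both are $\liminf$ by Lemma~\ref{discr-conv}), so $T\in\NC$ forces $T|_{\Gamma'}\in\NC$; since $\Gamma'$ is finitely generated with all root sets finite, Proposition~\ref{discr-subg} gives $(T|_{\Gamma'})^{n_0}=\Id$, which is $(5)$.

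The key step is $(5)\Rightarrow(4)$, and this is where I expect the main difficulty. Given $T^{n_0}|_{\Gamma'}=\Id$, the induced automorphism of the finite group $\Gamma/\Gamma'$ has finite order, so for a suitable power $S=T^{m}$ one has $S|_{\Gamma'}=\Id$ and $S$ fixes every coset of $\Gamma'$; hence $d(x):=x^{-1}S(x)\in\Gamma'$, and an easy induction using $S(d(x))=d(x)$ gives $S^k(x)=x\,d(x)^k$. The crucial observation is that $d$ takes central values: applying $S$ to $x\gamma x^{-1}$ for $\gamma\in\Gamma'$ shows $d(x)$ centralises $\Gamma'$, so $d(x)\in Z(\Gamma')$. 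By Lemma~\ref{newl-center}, $Z(\Gamma)\cap\Gamma'$ has finite index $p$ in $Z(\Gamma')$, so $d(x)^p\in Z(\Gamma)$; replacing $S$ by $S^p$ I may assume $d(x)=x^{-1}S(x)\in Z(\Gamma)$ for all $x$. Now $d$ is a genuine homomorphism $\Gamma\to Z(\Gamma)$ (centrality collapses the cocycle identity) that is trivial on $\Gamma'$, hence factors through the \emph{finite} group $\Gamma/\Gamma'$; its image is therefore finite, so $d(x)^{M}=e$ for $M=|\Gamma/\Gamma'|$, giving $S^{M}(x)=x\,d(x)^{M}=x$ and thus a power of $T$ equal to $\Id$. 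I regard this centrality argument as the heart of the proof: it is precisely what rules out an automorphism of infinite order that is nonetheless trivial on a finite-index subgroup, a possibility that looks genuinely threatening when $Z(\Gamma')$ is infinite (as for lattices with infinite centre, e.g.\ in $\widetilde{\SL}(2,\R)$), and it is resolved only because the cocycle is forced to factor through a finite quotient.

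For the remaining statements assume $T=S|_\Gamma$ with $S\in\Aut(G)$, so $S(\Gamma)=\Gamma$; I would prove $(4)\Rightarrow(7)\Rightarrow(6)\Rightarrow(3)$, closing the loop with the core equivalence. For $(4)\Rightarrow(7)$, write $\mathfrak{g}=\mathfrak{k}\oplus\mathfrak{n}$ for the (characteristic) compact and noncompact semisimple ideals, let $K$ be the maximal compact connected normal subgroup of $G$ (with Lie algebra $\mathfrak{k}$), and let $dS\in\Aut(\mathfrak{g})$ be the derivative. From $(4)$, $S^n$ fixes $\Gamma$ pointwise; projecting to $G/K$, the image of $\Gamma$ is a lattice in a semisimple group without compact factors, hence Zariski dense, so $dS^n$ commutes with $\Ad$ of a Zariski-dense subgroup and therefore with all inner automorphisms, forcing $dS^n=\Id$ on $\mathfrak{n}$. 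Since $\Aut(\mathfrak{k})$ is compact, $\overline{\langle dS\rangle}$ is compact in $\Aut(\mathfrak{g})$, and as $\Aut(G)\hookrightarrow\Aut(\mathfrak{g})$ is a closed topological embedding for connected $G$, the group $\overline{\langle S\rangle}$ is compact, which is $(7)$. Then $(7)\Rightarrow(6)$ because a relatively compact group of homeomorphisms of the compact space $\Sub_G$ acts distally: every limit of the orbit of a pair $(A,B)$ has the form $(\phi(A),\phi(B))$ with $\phi$ a homeomorphism, hence stays off the diagonal. And $(6)\Rightarrow(3)$ holds because $\Sub_\Gamma$ sits as a closed $S$-invariant subspace of $\Sub_G$ on which $S$ restricts to $T$.

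Finally, when $G$ has no compact factors, $\mathfrak{k}=0$, so the computation above gives $dS^n=\Id$ on all of $\mathfrak{g}$, whence $S^n=\Id$ on the connected group $G$; this is $(8)$, and $(8)\Rightarrow(4)$ is immediate by restriction to $\Gamma$. Thus $(1$--$8)$ are equivalent in this case. The only points requiring real care beyond routine verification are the cocycle-into-centre argument of the second paragraph and the correct bookkeeping of compact factors via $G/K$ in the Borel-density step; everything else is assembled from Lemmas~\ref{newl}, \ref{new2}, \ref{newl-center} and Proposition~\ref{discr-subg}.
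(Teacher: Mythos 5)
Your proposal is correct, and it is worth separating its two halves, because they compare differently with the paper. On the discrete core $(1)$--$(5)$ you follow essentially the paper's own route: the same ingredients (Lemmas \ref{newl}, \ref{new2}, \ref{newl-center} and Proposition \ref{discr-subg}) in the same roles, and your ``cocycle into the centre'' argument for $(5)\Rightarrow(4)$ is exactly the paper's computation --- the paper shows $y=x^{-1}T(x)\in Z(\Gamma')$, pushes it into $Z(\Gamma)$ by a power via Lemma \ref{newl-center}, and kills it using $x^k\in\Gamma'$, which is your statement that $d$ becomes a homomorphism factoring through the finite group $\Gamma/\Gamma'$. Your characteristic-core trick (intersecting all subgroups of index at most $[\Gamma:\Gamma_{\rm net}]$) is a clean way to make precise the paper's ``we may assume $\Gamma'$ is normal and $T$-invariant''; just note two small points: statement $(5)$ hands you an \emph{arbitrary} finite-index subgroup, which need not contain your fixed $\Gamma'$, so you must apply the same core trick inside that given subgroup before running $(5)\Rightarrow(4)$ (a one-line fix with the tool you already set up); and ``netness'' is a property of the image in $G/Z(G)$ rather than of $\Gamma'$ itself --- what you actually need and get is that root sets in $\Gamma'$ are subsets of root sets in $\Gamma_{\rm net}$, hence finite. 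Where you genuinely diverge from the paper is $(4)\Rightarrow(7)$ and $(8)$. The paper first reduces to $S$ inner (using that $\Inn(G)$ has finite index in $\Aut(G)$ for semisimple $G$), then invokes the Borel-density consequence $Z_G(\Gamma)=Z(G)$ (Theorem 5.18 of \cite{Rag}) and a group-level analysis of the almost direct product $G=KG_1$ to conclude $\inn(s)=\inn(k)$ with $k\in K$. You instead differentiate: split $\mathfrak{g}=\mathfrak{k}\oplus\mathfrak{n}$ into the compact and noncompact ideals, use Borel density in the form of Zariski density of $\psi(\Gamma)$ in $G/K$ to force $\du S^n|_{\mathfrak{n}}=\Id$ (commutants are Zariski closed, and an automorphism of a centreless semisimple algebra commuting with all $\Ad(g)$ is trivial), and finish with compactness of $\Aut(\mathfrak{k})$ and closedness of $\Aut(G)$ in $\GL(\G)$. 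Your version buys a uniform treatment of $(7)$ and $(8)$ with no case analysis ($K$ trivial, $G$ compact, mixed), at the price of the Lie-algebra commutant computation and the density transfer; the paper's version stays entirely at the group level and is more elementary. Finally, your proof of $(7)\Rightarrow(6)$ --- limits of orbits of pairs have the form $(\phi(A),\phi(B))$ --- is precisely Lemma \ref{aut-g}, whose real content is the continuity of the $\Aut(G)$-action on $\Sub_G$ (Lemma 2.4 of \cite{SY3}); cite it rather than re-derive it, since that continuity is not automatic.
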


\begin{proof} $(4)\implies (3)\implies (2) \implies (1)$ and $(4)\implies (5)$ are obvious. Now suppose $(5)$ holds. 
We show that (4) holds. Passing to a smaller subgroup of finite index if necessary, we may assume that $\Gamma'$ is 
normal in $\Gamma$ and that it is $T$-invariant. 

Since $\Gamma/\Gamma'$ is finite, replacing $n$ by a larger number if necessary, we may assume that 
${T^n}|_{\Gamma'}=\Id$ and $T^n$ acts trivially on $\Gamma/\Gamma'$. Without loss of any generality, we 
replace $T$ by $T^n$ and assume that $T|_{\Gamma'}$ is trivial and $T$ acts trivially on $\Gamma/\Gamma'$. 
We want to show that some power of $T$ is the identity map. 

Let $x\in\Gamma$. Then $T(x)=xy$ for some $y\in \Gamma'$. For any $g\in \Gamma'$, we have 
$xygy^{-1}x^{-1}=T(xgx^{-1})=xgx^{-1}$, and hence $ygy^{-1}=g$. Therefore, $y\in Z(\Gamma')$, 
the center of $\Gamma'$. By Lemma \ref{newl-center}, $Z(\Gamma)\cap\Gamma'$ is a subgroup 
of finite index in $Z(\Gamma')$. Let $m$ be the order of $Z(\Gamma')/(Z(\Gamma)\cap \Gamma')$ and let $k$ be the order of 
$\Gamma/\Gamma'$. Then $y^m\in Z(\Gamma)$ and we get that  $T^m(x)=xy^{m}\in x Z(\Gamma)$ and hence 
$T^m(x^k)=x^ky^{km}$. As $x^k\in\Gamma'$, we have that $T^m(x^{k})=x^k$, 
and hence $y^{km}=e$. Therefore, $T^{km}(x)=x$. Thus $T^{km}=\Id$ and (4) holds. 

Now we show that $(1)\implies (5)$. Suppose $T\in\NC$. Let $\Gamma'$ be a normal subgroup of finite index in $\Gamma$ 
as in Lemma \ref{newl}. That is, the set $R_g$ of roots of $g$ in $\Gamma'$ is finite for every $g\in\Gamma'$. Without loss of 
any generality, we may assume that $\Gamma'$ is $T$-invariant and $T|_{\Gamma'}\in\NC$. Note that $\Gamma'$, being a 
subgroup of finite index in $\Gamma$, is a lattice in $G$. Hence $\Gamma'$ is finitely generated, and we get from 
Proposition \ref{discr-subg} that $T^n|_{\Gamma'}=\Id$ for some $n\in\N$ and $(5)$ holds. That is, $(1-5)$ are equivalent.  

Let $S\in\Aut(G)$ and let $S|_\Gamma=T$. Then $(7)\implies (6)$ (see Lemma 2.4 in \cite{SY3} and the discussion before 
the lemma, or see Theorem 4.1 of \cite{SY3}, or Lemma \ref{aut-g} below). Note that $(6)\implies (3)$ is obvious. 
Now we prove that $(4)\implies (7)$. Suppose $T^n=\Id$ for some $n\in\N$. 
Since $G$ is semisimple, some power of $S$ is an inner automorphism of $G$. To prove (7), we may assume that 
$S$ itself is an inner automorphism of $G$. Let $s\in G$ be such that $S=\inn(s)$. Now $s\Gamma s^{-1}=\Gamma$ 
and from (4), we get that $s^l$ centralises $\Gamma$ for some $l\in\N$. Replacing $S$ by $S^l$, we may assume that 
$s\in Z_G(\Gamma)$, the centraliser of $\Gamma$ in $G$. Let $K$ be the largest compact connected normal subgroup 
of $G$ which is the product of all compact factors of $G$. 

If $K$ is trivial, then by Theorem 5.18 of \cite{Rag}, $Z_G(\Gamma)= Z(G)$, and hence $S=\Id$. That is, if  
$G$ has no compact factors, then (8) holds, and hence (7) also holds in this case. 
(Note that $(4)\implies (8)$ also follows directly from the Borel Density Theorem if $G$ has no compact factors.)

If $G$ is compact, then $\Aut(G)$ is compact as $\Inn(G)$ is a subgroup of finite index in $\Aut(G)$, hence (7) holds. 
Now suppose $G$ is not compact. Then $G=KG_1$ (almost direct product), where $G_1$ is a closed connected normal subgroup 
which is the product of all non-compact (simple) factors of $G$. Now $s=kh=hk$ for some $k\in K$ and $h\in G_1$. 
Let $\psi:G\to G/K$ be the natural projection. Then $\psi(\Gamma)$ is a lattice in $G/K$. As $G/K$ has no compact factors 
and as $\psi(s)$ centralises $\psi(\Gamma)$, we get as above that $\psi(s)\in Z(G/K)$, and since $\psi(s)=\psi(h)$, 
$hgh^{-1}g^{-1}\in K$ for all $g\in G$. Since $h\in G_1$, which is normal in $G$, we get
that $hgh^{-1}g^{-1}\in G_1\cap K$ which is a finite (central) subgroup of $G$. As $G$ is connected, the preceding assertion 
implies that $hgh^{-1}g^{-1}=e$ for all $g\in G$, and hence $h\in Z(G)$. Now $s=kh\in KZ(G)$ and $\inn(s)=\inn(k)$. 
As $k\in K$, we get that $\inn(s)$, and hence $S$ is contained in a compact subgroup of $\Aut(G)$. Therefore, $(7)$ holds. 
\end{proof} 

Note that Example \ref{ex1} shows that a connected simply connected solvable Lie group can admit an automorphism $T$ and
$T$-invariant lattices  $\Gamma_1$ and $\Gamma_2$ such that $\Gamma_1$ is a subgroup of finite index in $\Gamma_2$ and 
$T|_{\Gamma_1}=\Id$ but $T^n|_{\Gamma_2}\ne\Id$ for any $n\in\N$. This is unlike the case of lattices in a 
connected semisimple Lie group as shown by $(5)\implies (4)$ in Theorem \ref{newp}.

It would be interesting to study the distality of the actions of automorphisms of $\Gamma$ on $\Sub_\Gamma$
for a lattice $\Gamma$ in a general connected Lie group. 

\section{Distal Actions of Automorphisms on Sub$_{\boldsymbol G}$ for Certain Compact Groups and Nilpotent Groups}

In this section, for certain locally compact metrizable groups $G$ and $T\in\Aut(G)$, we characterise the distality of 
the $T$-action on $\Sub_G$ in terms of the compactness of the closure of the group generated by $T$ in $\Aut(G)$. 
It is shown in \cite{SY3} that if $\Aut(G)$ is endowed with the modified compact-open topology, then the map 
$\Aut(G)\times\Sub_G\to\Sub_G$ defined by $(T,H)\mapsto T(H)$, $T\in\Aut(G)$, $H\in\Sub_G$, is continuous, 
i.e.\  $\Aut(G)$ acts continuously on $\Sub_G$ by homeomorphisms (cf.\ \cite{SY3}, Lemma 2.4). For compact groups $G$, 
the modified compact-open topology is the same as the compact-open topology on $\Aut(G)$. For any connected Lie group 
$G$ with the Lie algebra $\G$, for a $T\in\Aut(G)$, there is a unique Lie algebra automorphism $\du T$ in $\GL(\G)$. 
Note that $\Aut(G)$ is isomorphic to a closed subgroup of $\GL(\G)$, (isomorphism is given by the map $T\mapsto\du T$). 
Therefore, $\Aut(G)$ is a Lie group whose topology is the same as the compact-open topology as well as the modified 
compact-open topology, (cf.\ \cite{Ar, Ho}). In general, if $\Aut(G)$ is endowed with the compact-open topology, 
then $\Aut(G)$ is a topological semigroup and the natural map $\Aut(G)\times G\to G$ is continuous, (see \cite{Str} for more 
details on topologies on $\Aut(G)$).

For a metric space $X$, a subset $\Omega$ of $\Homeo(X)$ is said to be equicontinuous at $x\in X$ if given $\epsilon>0$, 
there exists $\delta>0$ such that $\phi(B_\delta(x))\subset B_\epsilon(\phi(x))$, $\phi\in\Omega$, where $B_r(x)$ is 
the ball of radius $r$ centered at $x$ in $X$ for $r>0$. $\Omega$ is said to be equicontinuous on $X$ if $\Omega$ is 
equicontinuous at every $x\in X$. If $G$ is a locally compact first countable (metrizable) group with the identity $e$, then 
$G$ has a left invariant metric, and hence any $\Omega\subset \Aut(G)$ is equicontinuous at $x$ if and only if it is 
equicontinuous at $e$. Therefore, $\Omega$ is equicontinuous on $G$ if and only if given any neighbourhood $U$ of $e$, 
there exists a neighbourhood $V$ of $e$ such that $\phi(U)\subset V$ for all $\phi\in\Omega$. 

By Arzela-Ascoli Theorem (see e.g.\ \cite{Str}, Theorem 9.24), $\Omega\subset\Aut(G)$ is relatively compact in $\Aut(G)$ 
(with respect to the compact-open topology) if it is equicontinuous at $e$ and $\{\phi(x)\mid \phi\in \Omega\}$, the 
$\Omega$-orbit of $x$ is relatively compact in $G$, for every $x\in G$. The converse also holds since $G$ is locally compact 
and $\ol{\Omega}$ is compact, the action of $\ol{\Omega}$ on $G$ is uniformly continuous. The following useful version of 
Arzela-Ascoli Theorem for locally compact metrizable groups easily follows from above. 

\begin{lem} \label{aa}{\rm [Arzela-Ascoli Theorem]}
If $G$ is a locally compact first countable $($metrizable$)$ group. Let $\Aut(G)$ be the group of automorphisms of $G$ 
endowed with the compact-open topology. Let $\Omega$ be a subset of $\Aut(G)$. Then $\ol{\Omega}$ is compact in 
$\Aut(G)$ if and only if the following hold:
\begin{enumerate}
\item $\{\phi(x)\mid \phi\in \Omega\}$ is relatively compact in $G$, for every $x\in G$. 
\item $\Omega$ is equicontinuous at $e$.
\end{enumerate}
\end{lem}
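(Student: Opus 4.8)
The plan is to derive both implications from the classical Arzela--Ascoli theorem (\cite{Str}, Theorem~9.24), after first recording that for a family of automorphisms equicontinuity at $e$ is the same as equicontinuity everywhere. Since $G$ is locally compact and metrizable, it admits a compatible left-invariant metric $d$. For $\phi\in\Aut(G)$ and $x,y\in G$ one has $\phi(x)^{-1}\phi(y)=\phi(x^{-1}y)$, so left-invariance gives $d(\phi(x),\phi(y))=d(e,\phi(x^{-1}y))$. Hence if $\Omega$ is equicontinuous at $e$, a neighbourhood $V$ of $e$ with $\phi(V)\subset U$ for all $\phi\in\Omega$ forces $d(\phi(x),\phi(y))$ small whenever $x^{-1}y\in V$, uniformly over $\phi\in\Omega$; that is, $\Omega$ is (uniformly) equicontinuous on all of $G$. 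I would state this equivalence as the first step.

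For the implication ``$\ol{\Omega}$ compact $\implies (1)$ and $(2)$'', recall from the discussion preceding the lemma that the action map $\Aut(G)\times G\to G$ is continuous. Fixing $x\in G$, the orbit map $\phi\mapsto\phi(x)$ is therefore continuous, and it carries the compact set $\ol{\Omega}$ onto a compact subset of $G$ containing $\{\phi(x)\mid\phi\in\Omega\}$; this yields $(1)$. For $(2)$, fix a neighbourhood $U$ of $e$. Joint continuity of the action at each pair $(\phi,e)$ with $\phi\in\ol{\Omega}$, together with the compactness of $\ol{\Omega}$, gives by a tube-lemma argument a single neighbourhood $V$ of $e$ with $\phi(V)\subset U$ for every $\phi\in\ol{\Omega}$, which is exactly equicontinuity at $e$.

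For the reverse implication, assume $(1)$ and $(2)$. By the first step $\Omega$ is equicontinuous on all of $G$, and by $(1)$ every orbit is relatively compact in $G$; the classical Arzela--Ascoli theorem (\cite{Str}, Theorem~9.24) then gives that $\ol{\Omega}$ is compact in the compact-open topology. The delicate point, which I expect to be the main obstacle, is that $\Aut(G)$ carries only the structure of a topological semigroup, so a uniform-on-compacta limit of automorphisms is a priori only a continuous endomorphism; one must verify that such limits are genuine automorphisms. This forces one to control the inverse maps as well: a limit $\psi$ of a net in $\Omega$ is a continuous homomorphism, and to see that it is bijective --- whence an automorphism by the open mapping theorem --- one must also pass the approximating inverses to the limit. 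This is automatic in the intended applications, where $\Omega=\{T^{n}\}_{n\in\Z}$ is inverse-closed, so that equicontinuity and relative compactness of orbits hold simultaneously for $\Omega$ and its inverse family; in general it is the content packaged into the transformation-group form of Arzela--Ascoli invoked. Combining the two implications completes the proof.
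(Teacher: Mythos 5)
Your proposal follows essentially the same route as the paper's own justification, which consists of the discussion immediately preceding the lemma: a left-invariant metric makes equicontinuity at $e$ equivalent to (uniform) equicontinuity on $G$; the classical Arzela--Ascoli theorem (\cite{Str}, Theorem 9.24) gives the direction $(1)+(2)\Rightarrow$ compactness; and for the converse, compactness of $\ol{\Omega}$ plus continuity of the action map $\Aut(G)\times G\to G$ gives relatively compact orbits and equicontinuity (your tube-lemma argument is this claim spelled out).

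The ``delicate point'' you flag, however, is a genuine defect that the paper passes over in silence, and you are right that it cannot be dismissed: for an \emph{arbitrary} subset $\Omega\subset\Aut(G)$ the lemma as stated is false. Take $G=\R$ and $\Omega=\{x\mapsto x/n\mid n\in\N\}$. Conditions (1) and (2) hold (orbits are bounded, and $\delta=\epsilon$ witnesses equicontinuity at $0$), yet the only compact-open limit of this sequence in $C(\R,\R)$ is the zero map, which is not an automorphism; hence the closure of $\Omega$ taken \emph{inside} $\Aut(\R)$ is $\Omega$ itself, an infinite closed discrete subset, which is not compact. So some hypothesis beyond (1)--(2) is needed for the ``if'' direction, and your inverse-closedness condition is exactly the right one: if $\Omega=\Omega^{-1}$ satisfies (1)--(2), then given a net $\phi_i\to\psi$ in the closure of $\Omega$ in $C(G,G)$ (compact by classical Arzela--Ascoli), one passes to a subnet with $\phi_i^{-1}\to\chi$, and equicontinuity of $\Omega$ gives $\psi\circ\chi=\Id$ while equicontinuity of $\Omega^{-1}=\Omega$ gives $\chi\circ\psi=\Id$, so $\psi\in\Aut(G)$ and the closure in $C(G,G)$ coincides with the closure in $\Aut(G)$. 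Since every use of Lemma \ref{aa} in the paper --- Proposition \ref{cpt-distal} and Theorem \ref{cg-nilp-distal} with $\Omega=\{T^n\}_{n\in\Z}$, and Theorem \ref{distal-gp} with $\Omega=\Inn(G)$ --- applies it to a subgroup of $\Aut(G)$, your version with the inverse-closed hypothesis made explicit is both correct and sufficient for all of the paper's purposes; as written for general $\Omega$, neither your appeal to a ``transformation-group form'' of Arzela--Ascoli nor the paper's bare citation can close this gap, because the unrestricted statement is simply not true.
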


The following  lemma will be useful. 

\begin{lem} \label{aut-g} Let $G$ be a locally compact first countable $($metrizable$)$ group. Let $H\subset \Aut(G)$ be a subgroup. 
If $H$ is relatively compact with respect to the compact-open topology, then
$\ol{H}$ is a compact group and $H$ acts distally on $\Sub_G$. 
\end{lem}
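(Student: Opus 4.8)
The plan is to first show that $\ol{H}$ is a compact group and then deduce distality from the general principle that a compact group acting continuously by homeomorphisms on a Hausdorff space is automatically distal. First I would invoke Lemma \ref{aa}: since $H$ is relatively compact in the compact-open topology, $H$ is equicontinuous at $e$ and every orbit $\{\phi(x)\mid\phi\in H\}$ is relatively compact in $G$. Because $H$ is a \emph{subgroup}, we have $H^{-1}=H$, so the same two conclusions hold verbatim for the family of inverses. Consequently $H$ is relatively compact not merely in the compact-open topology but in the modified compact-open topology as well, and $\ol{H}$ is compact therein. This point is exactly where the hypothesis that $H$ be a subgroup is essential, and it is the main obstacle: as the paper notes, $\Aut(G)$ with the compact-open topology is only a topological \emph{semigroup}, so a priori $\ol{H}$ need not be closed under inversion; the identity $H^{-1}=H$ is what lets us pass to the finer topology in which inversion is continuous.

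Next I would verify directly that $\ol{H}$ is a group of automorphisms. Given $\phi\in\ol{H}$, choose a net $h_\alpha\in H$ with $h_\alpha\to\phi$. Equicontinuity upgrades this to uniform convergence on compact sets, so $\phi$ is a continuous endomorphism of $G$. Since $\{h_\alpha^{-1}\}$ is again equicontinuous with relatively compact orbits, after passing to a subnet we may assume $h_\alpha^{-1}\to\psi$ for some $\psi\in\ol{H}$, and joint continuity of composition on an equicontinuous family yields $\phi\psi=\psi\phi=\Id$. Hence $\phi\in\Aut(G)$ with $\phi^{-1}=\psi\in\ol{H}$, so $\ol{H}$ is closed under both composition and inversion. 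Being compact, $\ol{H}$ is a compact group, which proves the first assertion.

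Finally, for distality I would argue as follows. By Lemma 2.4 of \cite{SY3}, the compact group $\ol{H}$ acts continuously on the compact Hausdorff space $\Sub_G$, with each element acting as a homeomorphism. Fix distinct $A,B\in\Sub_G$. The map $\ol{H}\to\Sub_G\times\Sub_G$ sending $T\mapsto(T(A),T(B))$ is continuous, so its image $\{(T(A),T(B))\mid T\in\ol{H}\}$ is compact, hence closed, and it clearly contains the closure of $\{(T(A),T(B))\mid T\in H\}$. If this closed image met the diagonal, there would exist $T\in\ol{H}$ with $T(A)=T(B)$; but $T$ is a bijection of $\Sub_G$, forcing $A=B$, a contradiction. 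Therefore the orbit closure avoids the diagonal and $H$ acts distally on $\Sub_G$, completing the proof.
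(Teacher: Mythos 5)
Your proof is correct, and it reaches both conclusions by a route that differs from the paper's in its key step. The paper makes $\ol{H}$ a compact group by pure semigroup theory: $\Aut(G)$ with the compact-open topology is a topological semigroup (Lemma 9.5 of \cite{Str}), so $\ol{H}$ is a compact subsemigroup, and such a compact semigroup of invertible (hence cancellative) elements is automatically a compact topological group (Theorem 30.6 of \cite{Str}); closure under inversion and continuity of inversion come for free from that citation, with no need to invoke $H^{-1}=H$ or Arzela--Ascoli at all. You instead obtain the group structure concretely: Lemma \ref{aa} gives equicontinuity and relatively compact orbits, the symmetry $H^{-1}=H$ lets you control the family of inverses, and a net argument shows that every limit of elements of $H$ is an automorphism whose inverse is again such a limit. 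Your approach is more self-contained (no compact-cancellative-semigroup theorem) and makes transparent exactly where the subgroup hypothesis enters, at the cost of more work; the paper's is shorter but leans on two black-box citations. From that point on the two arguments coincide: both pass to the modified compact-open topology on the compact set $\ol{H}$ (where the two topologies agree) so that Lemma 2.4 of \cite{SY3} gives continuity of $\ol{H}\times\Sub_G\to\Sub_G$, and both conclude distality from compactness --- you simply spell out the standard deduction (the orbit of a pair under $\ol{H}$ is compact, hence closed, and meets the diagonal only if $T(A)=T(B)$ for some bijection $T$, forcing $A=B$) that the paper leaves implicit. One presentational remark: you assert relative compactness of $H$ in the modified topology \emph{before} verifying that limits of nets $(h_\alpha,h_\alpha^{-1})$ have the form $(\phi,\phi^{-1})$ with $\phi\in\Aut(G)$; that verification, which you give in your second paragraph, is what actually justifies the assertion, so the two steps should be stated in the opposite order --- but this is a matter of arrangement, not a gap.
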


\begin{proof} Note that $\Aut(G)$ is a topological semigroup with respect to the compact-open topology 
(see e.g.\ Lemma 9.5 of \cite{Str}). Therefore, $\ol{H}$ is a compact semigroup and hence $\ol{H}$ is a compact group 
in $\Aut(G)$ with respect to the compact-open topology (see e.g.\ Theorem 30.6 of \cite{Str}). Note that on $\ol{H}$, 
the compact-open topology and the modified compact-open topology coincide. It follows from Lemma 2.4 of \cite{SY3} that 
the natural map $\ol{H}\times\Sub_G\to\Sub_G$, defined by the action of automorphisms of $G$ on $\Sub_G$, 
is continuous. As $\ol{H}$ is compact, it follows that $\ol{H}$, and hence $H$ acts distally on $\Sub_G$. 
\end{proof}

As observed in the proof of Lemma \ref{aut-g}, it follows that a subgroup $H$ of $\Aut(G)$ is compact with respect to 
the compact-open topology, if and only if it is compact with the respect to the modified compact-open topology. Moreover, 
a subgroup $H$, which is compact (in the compact-open topology), is a compact topological group. Henceforth, $\Aut(G)$ 
is endowed with the compact-open topology, and for compact subgroups of $\Aut(G)$, we will not specify the topology. 

For a totally disconnected locally compact group $G$, $T\in\Aut(G)$ is distal if and only if $G$ has arbitrarily small compact 
open $T$-invariant subgroups, (this follows from Proposition 2.1 of \cite{JR} together with the `Note added in proof' in \cite{JR}). 
Moreover if $G$ is metrizable, then the above implies that, $T$ is distal if and only if $\{T^n\}_{n\in\Z}$ is equicontinuous (at $e$).  
We now get the following characterisation for compact totally disconnected groups. 

\begin{prop} \label{cpt-distal}
Let $G$ be a compact totally disconnected first countable $($metrizable$)$ group and let $T\in\Aut(G)$. Then the following are equivalent:
\begin{enumerate}
\item[{$(1)$}] $T$ acts distally on $G$.
\item[{$(2)$}] $T$ acts distally on $\Sub_G$.
\item[{$(3)$}] $T$ is contained in a compact subgroup of $\Aut(G)$. 
\end{enumerate}
\end{prop}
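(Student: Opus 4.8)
\textbf{Proof proposal for Proposition \ref{cpt-distal}.}

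The plan is to prove the cycle of implications $(3)\implies(2)\implies(1)\implies(3)$, since this gives equivalence of all three statements most economically. The implication $(3)\implies(2)$ is essentially free: if $T$ lies in a compact subgroup $H$ of $\Aut(G)$, then by Lemma \ref{aut-g} the group $H$ acts distally on $\Sub_G$, and in particular the subgroup $\{T^n\}_{n\in\Z}\subset H$ acts distally on $\Sub_G$, which is exactly statement $(2)$. For $(2)\implies(1)$, I would exploit the fact that $G$ embeds naturally into $\Sub_G$ via $x\mapsto \ol{G_x}$, the (compact, since $G$ is compact) cyclic subgroup generated by $x$. Since $T$ acts on $\Sub_G$ compatibly with its action on $G$, distality on the subspace of such cyclic subgroups should force distality on $G$ itself; the point to check is that if $T^{n_k}(x)\to e$ for an unbounded sequence, then the corresponding cyclic subgroups $T^{n_k}(\ol{G_x})=\ol{G_{T^{n_k}(x)}}$ converge to $\{e\}$ in the Chabauty topology, contradicting distality on $\Sub_G$ unless $x=e$. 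This uses that $G$ is totally disconnected and compact, so the closed cyclic subgroups are well-behaved (profinite) and the convergence criterion of Lemma \ref{conv} applies.

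The substantive implication is $(1)\implies(3)$, and this is where I expect the main work to lie. Here I would invoke the characterisation recalled just before the proposition: for a totally disconnected locally compact metrizable group, $T$ acts distally on $G$ if and only if $\{T^n\}_{n\in\Z}$ is equicontinuous at $e$, equivalently $G$ admits arbitrarily small compact open $T$-invariant subgroups. Equicontinuity at $e$ is precisely condition $(2)$ of the Arzela--Ascoli Lemma \ref{aa}. The remaining Arzela--Ascoli condition $(1)$, that the orbit $\{T^n(x)\mid n\in\Z\}$ is relatively compact in $G$ for every $x\in G$, holds automatically because $G$ itself is compact. Therefore Lemma \ref{aa} yields that $\ol{\{T^n\}_{n\in\Z}}$ is compact in $\Aut(G)$, which is exactly the compact subgroup containing $T$ required for statement $(3)$.

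The main obstacle I anticipate is making the equivalence between distality on $G$ and equicontinuity fully rigorous in this setting, i.e.\ correctly applying Proposition 2.1 of \cite{JR} together with its ``Note added in proof.'' Once equicontinuity at $e$ is in hand, the compactness of $G$ makes the Arzela--Ascoli argument immediate, so the entire weight of $(1)\implies(3)$ rests on that translation. A secondary point requiring care is the direction $(2)\implies(1)$: I must ensure that the map $x\mapsto\ol{G_x}$ interacts correctly with the Chabauty topology, so that non-distality of the $T$-action on $G$ genuinely produces a pair of distinct closed subgroups whose $T$-orbits approach the diagonal in $\Sub_G\times\Sub_G$. Given that $G$ is profinite, these cyclic subgroups are compact and the convergence criterion is clean, so I expect this to go through without serious difficulty, leaving $(1)\implies(3)$ as the genuine heart of the argument.
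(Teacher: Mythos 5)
Your proof is correct, and two of its three legs coincide with the paper's: $(3)\implies(2)$ via Lemma \ref{aut-g}, and $(1)\implies(3)$ via the equicontinuity characterisation of distality from Proposition 2.1 of \cite{JR} plus Lemma \ref{aa} (with orbit relative compactness free from compactness of $G$) — this is word for word the paper's argument, including the final appeal to the fact that the closure of a relatively compact subgroup of $\Aut(G)$ is a compact group. The one place you genuinely diverge is $(2)\implies(1)$: the paper simply cites Theorem 3.6 of \cite{SY3}, which handles all totally disconnected groups, whereas you give a self-contained Chabauty argument through the map $x\mapsto\ol{G_x}$. Your argument does go through, and the key point you flag is exactly the right one: in a compact totally disconnected group, van Dantzig's theorem provides a neighbourhood basis at $e$ of open (closed) subgroups $N$, so $T^{n_k}(x)\to e$ forces $\ol{G_{T^{n_k}(x)}}=T^{n_k}(\ol{G_x})\subset N$ eventually, whence $T^{n_k}(\ol{G_x})\to\{e\}$ in $\Sub_G$; pairing $\ol{G_x}\ne\{e\}$ with the fixed point $\{e\}$ then kills distality on $\Sub_G$. (Note this implication is false without total disconnectedness — e.g.\ rotations of the circle are distal on $\Sub_{\mT}$ but irrational rotation is not "Chabauty-detectable" this way — so your use of the open-subgroup basis is essential, not cosmetic.) What your route buys is independence from \cite{SY3} and a completely elementary proof in the compact profinite case; what the paper's citation buys is brevity and a statement valid for all totally disconnected locally compact $G$, which is the generality their Theorem \ref{cg-nilp-distal} later needs.
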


\begin{proof}
Here, $(3)\implies (2)$ follows from Lemma \ref{aut-g}. As $G$ is totally disconnected, $(2)\implies (1)$ follows from 
Theorem 3.6 of \cite{SY3}. It is enough to show that $(1)\implies (3)$. Suppose $T$ acts distally on $G$. Let 
$\Omega_T=\{T^n\}_{n\in\Z}$. By Proposition 2.1 of \cite{JR}, $\Omega_T$ is equicontinuous (at $e$). Also, since 
$G$ is compact, the $\Omega_T$-orbit of $x$ is relatively compact for every $x\in G$. By Lemma \ref{aa},  
$\Omega_T$ has compact closure in $\Aut(G)$. Hence $\ol{\Omega_T}$ is a compact group 
(see e.g.\ Theorem 30.6 of \cite{Str}).
\end{proof} 

Note that Proposition \ref{cpt-distal} also holds for a  non-compact totally disconnected (additive) group 
$G=\Q_p^n$, ($n\in\N$), a $p$-adic vector space, and $T\in \GL(n,\Q_p)$ (where $p$ is a prime). This follows from 
Lemma 2.1 of \cite{SY2} and Lemma \ref{aut-g} above together with the fact that $\GL(n,\Q_p)$ is a (metrizable) 
topological group and its topology is the same as the (modified) compact-open topology. 

The following generalises Theorem 4.1 of \cite{SY3} in the case of connected nilpotent Lie groups to 
all compactly generated nilpotent groups. Note that in any connected nilpotent group $G$, the unique 
maximal compact subgroup $K$ is connected, abelian and central in $G$ (as $G$ is Lie projective) and 
its torsion group is dense in $K$. Therefore, such a $G$ is torsion-free if and only if it is a simply connected 
nilpotent Lie group, (equivalently, it has no nontrivial compact subgroup). Any compactly generated nilpotent 
Lie group is torsion-free if and only if its maximal compact subgroup is trivial. 

\begin{thm} \label{cg-nilp-distal} 
Let $G$ be a locally compact metrizable compactly generated nilpotent  group such that 
$G^0$ is torsion-free and let $T\in\Aut(G)$. 
Then the following are equivalent.
\begin{enumerate}
\item[{$(1)$}] $T$ acts distally on $\Sub_G$.
\item[{$(2)$}] The closure of the group generated by $T$ in $\Aut(G)$ is a compact group.
\end{enumerate}
Moreover, if $G$ as above is a Lie group $($with not necessarily finitely many connected components$)$, then 
the following are equivalent and they also equivalent to statements $(1-2)$ above. 
\begin{enumerate}
\item[{$(3)$}] $T\in\NC$.
\item[{$(4)$}] $T$ acts distally on $\Sub^a_G$.
\end{enumerate}
\end{thm}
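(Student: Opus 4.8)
The plan is to prove Theorem~\ref{cg-nilp-distal} in two stages. The core equivalence $(1)\iff(2)$ concerns a general compactly generated nilpotent metrizable group $G$ with $G^0$ torsion-free, while the extension to $(3)$ and $(4)$ is reserved for the Lie-group case. For the implication $(2)\implies(1)$, I would invoke Lemma~\ref{aut-g} directly: if the closure $H:=\ol{\{T^n\}_{n\in\Z}}$ is compact in $\Aut(G)$ (with the compact-open topology), then $H$ acts distally on $\Sub_G$, and in particular $T$ does. This direction is essentially immediate once the topology on $\Aut(G)$ is fixed. The substance of the theorem lies in $(1)\implies(2)$.

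For $(1)\implies(2)$, the strategy is to verify the two hypotheses of the Arzela--Ascoli criterion (Lemma~\ref{aa}) for the family $\Omega_T=\{T^n\}_{n\in\Z}$: namely, that each orbit $\{T^n(x)\mid n\in\Z\}$ is relatively compact in $G$, and that $\Omega_T$ is equicontinuous at $e$. First I would reduce to a structurally tractable setting: by Corollary~\ref{nilp-str}, $G$ has a unique maximal compact subgroup, say $K$; since $G^0$ is torsion-free and $G^0$ is central in any compact nilpotent group (Proposition~\ref{centraliser-nilp}), one expects $K$ to be a totally disconnected $T$-invariant (characteristic) subgroup, and $G/K$ to be a simply connected nilpotent Lie group with $T$-orbits controlled by the induced automorphism. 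The plan is to split the analysis along the exact sequence $K\hookrightarrow G\twoheadrightarrow G/K$. On the Lie quotient $G/K$, distality on $\Sub_{G/K}$ forces, via the $\NC$ machinery and Lemma~\ref{mt} (using that compactly generated nilpotent groups are strongly root compact), that every element lies in $M(T)$, so orbits are relatively compact there; on the totally disconnected piece $K$, Proposition~\ref{cpt-distal} supplies that distality yields $T|_K$ lying in a compact subgroup of $\Aut(K)$, hence equicontinuity on $K$. Assembling relative compactness of orbits and equicontinuity across the extension then gives, by Lemma~\ref{aa}, that $\ol{\Omega_T}$ is compact.

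\textbf{The main obstacle} I anticipate is establishing \emph{equicontinuity at $e$} for the full group $G$ from distality on $\Sub_G$, rather than merely relative compactness of orbits. Relative compactness follows fairly directly from $T\in\NC$ together with strong root compactness (Lemma~\ref{mt} shows every element with closed cyclic group lies in $M(T)$, and in the nilpotent Lie quotient this covers the relevant generators). Equicontinuity is more delicate because it is a uniform statement: one must rule out the possibility that $T^n$ expands small neighbourhoods of $e$ even while all individual orbits stay bounded. The likely route is to transfer the problem to the Lie algebra level on the connected part---where $\du T$ acts linearly and distality of the $T$-action on $\Sub_{G^0}$ (via the results of \cite{SY3} quoted in the connected-Lie case) forces $\du T$ to have all eigenvalues of modulus one and to be semisimple, hence to generate a relatively compact subgroup of $\GL(\G)$---and to combine this with the totally disconnected estimate from Proposition~\ref{cpt-distal}. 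Controlling the interaction between the connected and totally disconnected directions (i.e.\ that no ``shearing'' between $K$ and $G/K$ destroys uniform equicontinuity) is where the nilpotency and the torsion-freeness of $G^0$ must be used most carefully.

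For the final equivalences in the Lie case, $(1)\implies(3)$ is trivial since distality on $\Sub_G$ implies distality on the subspace $\Sub^a_G$ and hence $T\in\NC$ by the remark preceding Lemma~\ref{discr-quo}; likewise $(4)\implies(3)$ is the standard implication that distality on $\Sub^a_G$ yields membership in $\NC$. To close the loop I would show $(3)\implies(2)$: assuming $T\in\NC$, strong root compactness and Lemma~\ref{mt} give relative compactness of all orbits of elements generating closed cyclic groups, and in the Lie setting the eigenvalue analysis of $\du T$ (forced by $\NC$ on the connected part) together with Proposition~\ref{cpt-distal} on the totally disconnected part again yields equicontinuity, so $\ol{\Omega_T}$ is compact by Lemma~\ref{aa}. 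Finally $(2)\implies(4)$ follows from Lemma~\ref{aut-g}, since a compact group of automorphisms acts distally on all of $\Sub_G$ and a fortiori on $\Sub^a_G$. This establishes that $(1)$--$(4)$ are all equivalent in the Lie case.
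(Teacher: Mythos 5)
Your overall architecture coincides with the paper's: $(2)\implies(1)$ via Lemma \ref{aut-g}, the chain $(1)\implies(4)\implies(3)$ being immediate, and the real work being to obtain compactness of $\ol{\{T^n\}_{n\in\Z}}$ by verifying the two hypotheses of Lemma \ref{aa}. Your treatment of orbit compactness (Lemma \ref{mt} plus strong root compactness, with the maximal compact subgroup $K$ absorbing the elements whose cyclic group is not closed) and of equicontinuity on $G^0$ (Theorem 4.1 of \cite{SY3} applied to the simply connected nilpotent Lie group $G^0$) is exactly what the paper does. Two small corrections: $G/K$ is a compactly generated torsion-free nilpotent Lie group, not a simply connected one (it need not be connected, e.g.\ $G=\Z$); and in the Lie case your detour through Proposition \ref{cpt-distal} is unnecessary, since there $G^0$ is open, so equicontinuity of $\{T^n|_{G^0}\}$ at $e$ already \emph{is} equicontinuity of $\{T^n\}$ at $e$ in $G$, which closes the Lie case at once.

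Where you genuinely deviate is the general (non-Lie) case, and that is also where your proposal has its one real gap---one you flag yourself but do not close. You propose to get equicontinuity on $K$ via Proposition \ref{cpt-distal} (legitimate: $K$ is characteristic, totally disconnected because $G^0$ has no nontrivial compact subgroups, and $\Sub_K$ is a closed $T$-invariant subspace of $\Sub_G$) and then to ``assemble'' this with equicontinuity on $G^0$. But separate equicontinuity on $K$ and on $G^0$ yields equicontinuity on $G$ only if the sets $VW$, with $V$ a neighbourhood of $e$ in $K$ and $W$ one in $G^0$, form a neighbourhood basis of $e$ in $G$; that is, only if $KG^0$ is open in $G$ and carries the product topology. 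This is true but requires proof: one needs $G/K$ to be a Lie group (so that $KG^0/K=(G/K)^0$ is open), which follows from Lie projectivity (Theorem \ref{distal-str}) together with Corollary \ref{nilp-str}, and one needs $K$ to centralise $G^0$ (Proposition \ref{centraliser-nilp}), so that $KG^0\cong K\times G^0$. Your proposed ``likely route'' (eigenvalue analysis of $\du T$ on $\G$ plus Proposition \ref{cpt-distal} on $K$) does not by itself rule out the shearing you worry about; only the local product structure does. The paper avoids this issue by a different mechanism: from $(1)$, Theorem 3.6 of \cite{SY3} gives that $T$ is distal on $G$, so Corollary \ref{T-Lie} makes $G$ $T$-Lie projective; the resulting compact open normal subgroups $K_n$ are $T$-\emph{invariant}, so the sets $K_nW_m$ (with $\{W_m\}$ a $T$-stable neighbourhood basis in $G^0$) form a neighbourhood basis of $e$ on which equicontinuity is automatic. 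In short, your plan is completable with facts already available in the paper, but as written the decisive gluing step is missing, and the paper's invariant-subgroup argument is the cleaner way to carry it out.
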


\begin{proof} Let $\Omega_T= \{T^n\}_{n\in\Z}$. If $\ol{\Omega_T}$ is compact, then it is a compact group 
(cf.\ \cite{Str}, Theorem 30.6).
Now using Lemma \ref{aut-g}, we get that $(2)\implies (1)$. We know that $(1)\implies (4)\implies (3)$. Suppose $T\in\NC$. 
Since $G$ is strongly root compact, by Lemma \ref{mt}, $\{x\in G\mid G_x\mbox{ is closed}\}\subset M(T)$. 
Since $G$ is compactly generated and nilpotent, it has a unique maximal compact group $K$ such that $G/K$ is 
a compactly generated torsion-free Lie group and all its cyclic subgroups are discrete. Now if $x\not\in K$, we get that 
$G_x$ is closed, and hence $x\in M(T)$. As $K$ is $T$-invariant, we have $K\subset M(T)$, and hence $G=M(T)$. 
This implies that $\Omega_T$ satisfies the condition (1) of Lemma \ref{aa}. 

 As $G^0$ is a simply connected nilpotent Lie group, by
Theorem 4.1 of \cite{SY3}, we get that $T|_{G^0}$ generates a relatively compact group in $\Aut(G^0)$. This implies that 
$\{(T|_{G^0})^n\}_{n\in\Z}$ is equicontinuous on $G^0$. 

Suppose $G$ is a Lie group. As $G^0$ is open, the preceding assertion implies that 
$\Omega_T$ is equicontinuous at $e$ and $\Omega_T$  satisfies the condition (2) of Lemma \ref{aa}. 
Therefore, $(3)\implies (2)$, and hence $(1-4)$ are equivalent for a Lie group $G$. 

Suppose $G$ is not a Lie group and suppose (1) holds. Then $T\in\NC$ and $\Omega_T$ satisfies the condition (1) of 
Lemma \ref{aa} as shown above. As $T$ acts distally on $\Sub_G$, by Theorem 3.6 of \cite{SY3}, $T$ is distal. As $G$ 
is compactly generated and nilpotent, it is distal and 
by Corollary \ref{T-Lie}, $G$ is $T$-Lie projective. Therefore, there exist compact open $T$-invariant normal subgroups 
$K_n$ such that $G/K_n$ is a Lie group, $K_n\subset K_{n+1}$, $n\in\N$, and $\bigcap_n K_n=\{e\}$. As $G^0$ has 
no nontrivial compact subgroup, $G_n:=G^0\times K_n$ are open $T$-invariant subgroups of $G$ such that $T(G^0)=G^0$ and 
$T(K_n)=K_n$, $n\in\N$. We know from above that $\{(T|_{G^0})^n\}_{n\in\Z}$ is equicontinuous on $G^0$. Let 
$\{W_m\}_{m\in\N}$ be a neighbourhood basis of the identity $e$ in $G^0$ such that $T^k(W_{m+1})\subset W_m$ 
for all $k\in \Z$ and $m\in \N$. Then $\{K_n\times W_m\mid m,n\in\N\}$ is a neighbourhood basis of the identity 
$e$ in $G$. As $K_n$ are $T$-invariant, it follows that $\Omega_T$ is equicontinuous at $e$ and it satisfies 
the condition (2) of Lemma \ref{aa}. Hence ${\ol\Omega_T}$ is a compact group in $\Aut(G)$ and (2) holds. 
\end{proof}

Note that if $\Gamma$ is a locally compact compactly generated nilpotent group without any nontrivial compact subgroup, 
then $\Gamma$ embeds  in a connected simply connected nilpotent Lie group $G$ as a closed co-compact subgroup 
and any automorphism of $\Gamma$ extends to a unique automorphism of $G$ (cf.\ \cite{Rag}). Note also that any 
closed subgroup of a simply connected nilpotent group is compactly generated. We now have the following corollary 
which can be viewed as an extension of Corollary \ref{lattice-nilp}. 
 
\begin{cor} \label{closed-nil}
Let $G$ be a connected simply connected nilpotent Lie group. Let $\Gamma$ be a closed co-compact subgroup of $G$. 
Let $T\in \Aut(G)$ be such that $T(\Gamma)=\Gamma$. Then $(1-6)$ of Corollary \ref{lattice-nilp} are equivalent and 
they are also equivalent to the following: $T$ is contained in a compact subgroup of $\Aut(G)$.
\end{cor}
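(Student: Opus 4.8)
The plan is to reduce everything to two applications of Theorem \ref{cg-nilp-distal}, one to the ambient group $G$ and one to the subgroup $\Gamma$, and then to bridge the two by a linear-algebra argument at the level of Lie algebras. First I would record that $\Gamma$, being a closed subgroup of the simply connected nilpotent Lie group $G$, is itself a metrizable compactly generated nilpotent Lie group (with possibly infinitely many components), whose identity component $\Gamma^0$ is a closed connected subgroup of $G$, hence simply connected and in particular torsion-free. Moreover $T(\Gamma^0)=\Gamma^0$, since $\Gamma^0$ is characteristic in $\Gamma$. Thus Theorem \ref{cg-nilp-distal} applies to $\Gamma$, giving that statements $(1)$, $(2)$, $(3)$ of Corollary \ref{lattice-nilp} are equivalent and each equivalent to the closure of $\langle T|_\Gamma\rangle$ in $\Aut(\Gamma)$ being compact. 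Applying the same theorem to $G$ itself (a connected, hence compactly generated, simply connected nilpotent Lie group with $G^0=G$ torsion-free) shows that $(4)$, $(5)$, $(6)$ are equivalent and each equivalent to $T$ lying in a compact subgroup of $\Aut(G)$, i.e.\ to $\overline{\langle T\rangle}$ being compact. It therefore remains only to link the two clusters.

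The easy direction is $(4)\Rightarrow(1)$, $(5)\Rightarrow(2)$, $(6)\Rightarrow(3)$. Here I would note that since $\Gamma$ is closed in $G$, the space $\Sub_\Gamma$ (resp.\ $\Sub^a_\Gamma$, $\Sub^c_\Gamma$) is a $T$-invariant closed subspace of $\Sub_G$ on which the Chabauty topology agrees with the subspace topology, and a closed cyclic subgroup of $\Gamma$ is a closed cyclic subgroup of $G$. Distality and membership in $\NC$ both pass to invariant subcollections, so these implications are immediate.

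The heart of the proof, and the place where co-compactness is essential, is the reverse implication, which I would phrase as: if $\overline{\langle T|_\Gamma\rangle}$ is compact in $\Aut(\Gamma)$ then $\overline{\langle T\rangle}$ is compact in $\Aut(G)$. I would pass to the Lie algebra $\mathfrak{g}$ of $G$ via the (homeomorphic) $\log$ map, writing $dT\in\Aut(\mathfrak{g})\subset\GL(\mathfrak{g})$ for the induced automorphism, so that $T^n(\gamma)=\exp(dT^n\log\gamma)$. Compactness of $\overline{\langle T|_\Gamma\rangle}$ forces every orbit $\{(T|_\Gamma)^n(\gamma)\}_{n\in\Z}$ to be relatively compact in $\Gamma$, hence in $G$; equivalently $\{dT^n(\log\gamma)\}_{n\in\Z}$ is bounded in $\mathfrak{g}$ for each $\gamma\in\Gamma$. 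The crucial point is that the Lie subalgebra generated by $\log\Gamma$ is all of $\mathfrak{g}$: letting $H$ be the closed connected subgroup of $G$ with this subalgebra as its Lie algebra, we have $\Gamma\subset H$, so the compact space $G/\Gamma$ surjects onto $G/H$, which is a connected, contractible manifold of dimension $\dim G-\dim H$; being compact it must be a point, forcing $H=G$. Using $dT[v,w]=[dT v,dT w]$ and bilinearity of the bracket, boundedness of the $\{dT^n\}$-orbits propagates from $\log\Gamma$ to all iterated brackets, hence to a basis of $\mathfrak{g}$ extracted from the generated subalgebra. Consequently the matrices $\{dT^n\}_{n\in\Z}$ are bounded for $n$ of both signs; since the set is a group, its closure avoids singular matrices and is a compact subgroup of $\GL(\mathfrak{g})$, i.e.\ $\overline{\langle T\rangle}$ is compact in $\Aut(G)\cong\Aut(\mathfrak{g})$. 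This gives $(1)\Rightarrow$ the compactness statement for $G$, and together with the previous clusters closes the chain of equivalences.

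I expect the main obstacle to be the two places where co-compactness enters: verifying that $\log\Gamma$ Lie-generates $\mathfrak{g}$ (the $G/H$ argument above, where one must be careful that $H$ need not be normal but $G/H$ is still contractible, hence compact only in dimension zero), and keeping the passage between relative compactness of orbits in $\Gamma$, in $G$, and boundedness in $\mathfrak{g}$ clean. A secondary point to check carefully is that restricting an automorphism of $G$ to $\Gamma$ gives an automorphism of $\Gamma$ to which Theorem \ref{cg-nilp-distal} genuinely applies, which is guaranteed by the unique-extension correspondence recorded before the corollary (cf.\ \cite{Rag}).
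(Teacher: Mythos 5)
Your proposal is correct and follows the same overall strategy as the paper's proof: both reduce the problem to showing that statement $(1)$ of Corollary \ref{lattice-nilp} forces $T$ to lie in a compact subgroup of $\Aut(G)$; both obtain compactness of the closure of $\{(T|_\Gamma)^n\}_{n\in\Z}$ in $\Aut(\Gamma)$ from Theorem \ref{cg-nilp-distal} applied to $\Gamma$; and both transfer this to $G$ by passing to the Lie algebra, getting relative compactness of the orbits $\{\du T^n(g)\}_{n\in\Z}$ for $g\in\log(\Gamma)$, extending this to all of $\G$, and concluding that $\du T$ lies in a compact subgroup of $\GL(\G)$, which suffices because $\Aut(G)$ is closed in $\GL(\G)$. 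The one genuine divergence is the step where boundedness is propagated from $\log(\Gamma)$ to $\G$: the paper asserts, without proof, that co-compactness of $\Gamma$ makes $\log(\Gamma)$ span $\G$ \emph{as a vector space}, and then uses linearity alone; you instead prove the weaker statement that $\log(\Gamma)$ generates $\G$ \emph{as a Lie algebra} --- via the observation that $H=\exp(\mathfrak{h})$ is a closed connected subgroup containing $\Gamma$, so the compact space $G/\Gamma$ surjects onto the contractible manifold $G/H$, forcing $H=G$ --- and then compensate by propagating boundedness through brackets using $\du T[v,w]=[\du T v,\du T w]$ together with linearity. Your route is self-contained: it avoids the Malcev-type spanning fact the paper implicitly relies on, which is classical for lattices but does require an argument for general closed co-compact subgroups, whereas the paper's route is shorter at the cost of that unproved assertion. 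Your bookkeeping of the easy implications (two applications of Theorem \ref{cg-nilp-distal}, plus restriction of distality and of membership in $\NC$ along the closed embedding $\Sub_\Gamma\subset\Sub_G$) is also sound and matches what the paper leaves implicit.
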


\begin{proof} Note that it is enough to show that if $T|_\Gamma\in\NC$, then $T$ is contained in a compact subgroup 
of $\Aut(G)$. Let $T|_\Gamma\in\NC$. By Theorem \ref{cg-nilp-distal}, $T|_\Gamma$ is contained in a compact subgroup 
of $\Aut(\Gamma)$. Here, $\exp:\G\to G$ is a homeomorphism with $\log$ as it is inverse. Let $\du T:\G\to\G$ be 
the Lie algebra automorphism corresponding to $T$. Since $\Gamma$ is co-compact, it follows that $\log(\Gamma)$ 
generates $\G$ as a vector space and we also have that $\{\du T^n(g)\}_{n\in\Z}$ is relatively compact for all 
$g\in\log(\Gamma)$, and hence it is relatively compact for all  $g\in\G$. This implies that $\du T$ is contained in a compact 
subgroup of $\GL(\G)$. As $\Aut(G)$ is a closed subgroup of $\GL(\G)$, $T$ is contained in a compact subgroup of $\Aut(G)$. 
\end{proof}

Note that the action of $G$ on $\Sub_G$ is the same as the action of $\Inn(G)$ on $\Sub_G$, where $\Inn(G)$ is the group 
of inner automorphisms of $G$. For subgroups $H_1$ and $H_2$ of $G$, let $[H_1,H_2]$ denote the subgroup generated by 
$\{h_1h_2h_1^{-1}h_2^{-1}\mid h_1\in H_1, h_2\in H_2\}$. Recall that for a subgroup $H$ of $G$, $Z_G(H)$ denotes the 
centraliser of $H$ in $G$. The following theorem is an analogue of Corollary 4.5 of \cite{SY3} in the case of certain disconnected 
nilpotent groups. 

\begin{thm} \label{distal-gp}
Let $G$ be a locally compact  metrizable compactly generated nilpotent group and let $K$ be the unique maximal compact 
$($normal$)$ subgroup of $G$.  Then the following are equivalent:
\begin{enumerate}
\item[{$(1)$}] Every inner automorphism of $G$ acts distally on $\Sub_G$.
\item[{$(2)$}] $G$ acts distally on $\Sub_G$. 
\item[{$(3)$}] $\ol{\Inn(G)}$ is a compact subgroup of $\Aut(G)$. 
\item[{$(4)$}] $G/K$ is abelian and $G=Z_G(G^0)$. 
\end{enumerate}
In case $G$ is a torsion-free Lie group, then $(1-4)$ are equivalent to the following: 
\begin{enumerate}
\item[{$(5)$}] $G$ is abelian. 
\end{enumerate}
\end{thm}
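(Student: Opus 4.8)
The plan is to route every equivalence through an Arzela--Ascoli analysis of $\overline{\Inn(G)}$, using Lemma \ref{aa} and Lemma \ref{aut-g}. The implications $(3)\Rightarrow(2)\Rightarrow(1)$ are the easy part: if $\overline{\Inn(G)}$ is compact then $\Inn(G)$, and hence $G$, acts distally on $\Sub_G$ by Lemma \ref{aut-g}, giving $(3)\Rightarrow(2)$; and if the whole group $\Inn(G)$ acts distally on $\Sub_G$, then so does every cyclic subgroup $\{\inn(g)^n\}_{n\in\Z}$, giving $(2)\Rightarrow(1)$. The substance is the equivalence of $(3)$ and $(4)$ together with the closing implication $(1)\Rightarrow(4)$.

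For $(4)\Rightarrow(3)$ I would verify the two hypotheses of Lemma \ref{aa} for $\Omega=\Inn(G)$. Since $G/K$ is abelian we have $[G,G]\subseteq K$, so for each $x$ the conjugacy class satisfies $gxg^{-1}=[g,x]\,x\in Kx$, a compact set; thus orbits are relatively compact and condition $(1)$ of Lemma \ref{aa} holds. The remaining condition, equicontinuity of $\Inn(G)$ at $e$, is where $G=Z_G(G^0)$ enters: conjugation is trivial on the central subgroup $G^0$, so only the directions transverse to $G^0$ need be controlled. Here I would use that $G$, being compactly generated nilpotent and hence distal, is Lie projective (Theorem \ref{distal-str}), and combine the $\inn(g)$-invariance of the connected part with the $\inn(g)$-invariant compact normal subgroups coming from the Lie projective structure to produce an $\Inn(G)$-invariant neighbourhood basis at $e$; equicontinuity follows. \emph{This equicontinuity step is the main obstacle,} since it requires patching the trivial action on $G^0$ with invariant neighbourhoods in the (possibly non-discrete) totally disconnected directions. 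Running Lemma \ref{aa} then gives $(4)\Rightarrow(3)$, and $(3)\Rightarrow(4)$ follows by reading the same two conditions backwards: compactness of $\overline{\Inn(G)}$ forces relatively compact conjugacy classes, so $\overline{[G,G]}$ is compact and lies in $K$, whence $G/K$ is abelian; and restricting the compact group $\overline{\Inn(G)}$ to $G^0$ yields a compact group of unipotent-type inner automorphisms, which must be trivial, giving $G=Z_G(G^0)$.

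The closing implication $(1)\Rightarrow(4)$ I would prove in two halves, each by passing to an invariant subquotient on which the earlier theorems apply. For $G/K$ abelian: since $K$ is characteristic, the subspace $\{H\in\Sub_G\mid H\supseteq K\}$ is closed, $\inn(g)$-invariant, and equivariantly homeomorphic to $\Sub_{G/K}$, so $(1)$ yields that each induced inner automorphism acts distally on $\Sub_{G/K}$. As $G/K$ is a compactly generated torsion-free nilpotent Lie group with $(G/K)^0$ torsion-free, Theorem \ref{cg-nilp-distal} shows each such inner automorphism generates a relatively compact subgroup of $\Aut(G/K)$; being inner on a torsion-free nilpotent group it is unipotent, and a bounded unipotent automorphism is the identity. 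Hence $\Inn(G/K)$ is trivial and $G/K$ is abelian. For $G=Z_G(G^0)$: restricting to the closed $\inn(g)$-invariant subspace $\Sub_{G^0}$ shows $\inn(g)|_{G^0}$ acts distally on $\Sub_{G^0}$ for every $g$. Reducing $G^0$ modulo its maximal compact subgroup (central by Proposition \ref{centraliser-nilp}) to a simply connected nilpotent Lie quotient and applying Theorem 4.1 of \cite{SY3} forces the unipotent part of $\inn(g)$ to be trivial there; the only remaining possibility is a nontrivial homomorphism $x\mapsto[g,x]$ of $G^0$ into the central compact part (a homomorphism precisely because $G^0$ centralises $K$ by Proposition \ref{centraliser-nilp}), which a direct Chabauty winding computation on a quotient of the form $\R\times\mT$ shows to be incompatible with distality on $\Sub_{G^0}$. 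Thus $\inn(g)|_{G^0}=\Id$ for all $g$, i.e.\ $G=Z_G(G^0)$, completing $(1)\Rightarrow(4)$.

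Finally, for the torsion-free Lie case, a torsion-free compactly generated nilpotent Lie group has trivial maximal compact subgroup $K=\{e\}$ (as noted before Theorem \ref{cg-nilp-distal}), so condition $(4)$ reads that $G$ is abelian and $G=Z_G(G^0)$; since $G$ abelian makes $G^0$ central automatically, $(4)$ is equivalent to $(5)$, and the chain $(1)$--$(5)$ closes up.
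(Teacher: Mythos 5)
Your cycle of implications does close ($(1)\Rightarrow(4)\Rightarrow(3)\Rightarrow(2)\Rightarrow(1)$, plus $(4)\Leftrightarrow(5)$ in the torsion-free Lie case), and two of its three substantive legs are sound. Your $(4)\Rightarrow(3)$ is essentially the paper's own argument: conjugacy classes lie in cosets $xK$ because $\ol{[G,G]}\subset K$, and Lie projectivity (Theorem \ref{distal-str}) together with the facts that $K$ centralises $G^0$ and that $(4)$ makes conjugation trivial on $G^0$ yields the $\Inn(G)$-invariant neighbourhood basis $\{K_nU_m\}$, $U_m\subset G^0$, needed for Lemma \ref{aa}. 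Your route to ``$G/K$ is abelian'' in $(1)\Rightarrow(4)$ is a genuine alternative to the paper's (which works with the second centre and Lemma 3.12 of \cite{SY3}): quoting Theorem \ref{cg-nilp-distal} on $G/K$ is legitimate, but note that ``a bounded unipotent automorphism is the identity'' is not a pointwise linear-algebra fact on a disconnected group; you should argue that if $w$ is the last nonvanishing iterated commutator $[\bar g,[\bar g,\dots,[\bar g,\bar x]]]$ and $z$ the previous one, then $\inn(\bar g)^n(z)=w^nz$, so relative compactness of orbits forces $\ol{\langle w\rangle}$ to be compact, and it is the absence of nontrivial compact subgroups in $G/K$ (not torsion-freeness as such) that gives $w=e$. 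Your direct $(3)\Rightarrow(4)$ is redundant and, as stated, leans on the uncited Grosser--Moskowitz-type theorem that a compactly generated group with relatively compact conjugacy classes has compact $\ol{[G,G]}$; drop it.

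The genuine gap is in the second half of $(1)\Rightarrow(4)$, the proof that $G=Z_G(G^0)$. You correctly reduce to a continuous homomorphism $\phi:G^0\to C:=K\cap G^0$, $\phi(x)=[g,x]$, but then assert that nontriviality of $\phi$ is killed by ``a direct Chabauty winding computation on a quotient of the form $\R\times\mT$.'' Two things are missing. First, the winding model $(t,z)\mapsto(t,z\chi(t))$ presupposes that $\inn(g)$ fixes $C$ pointwise, i.e.\ that $\phi|_C$ is trivial; at this stage that is not known --- it is a substantial part of what must be proved. If $\phi|_C\neq\Id$, the action on $C$ itself is a nontrivial unipotent automorphism of a compact connected abelian group (think of $(z,w)\mapsto(z,zw)$ on $\mT^2$), and your model does not apply; the paper handles exactly this by the descending chain $C_k=\ol{[G_x,C_{k-1}]}$ (to get unipotence on $C$) followed by Proposition 4.2 of \cite{SY3}. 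Second, even granting $\phi|_C=\Id$, producing a closed $\inn(g)$-invariant subquotient isomorphic to $\R\times\mT$ on which the action is the winding map requires structure you never establish: a one-parameter subgroup transverse to $\ker\phi$ (the paper gets $G^0=\R^n\times C$ from Corollary 4.5 of \cite{SY3}), and, when $G$ is not a Lie group, a prior passage to Lie quotients, since then $C$ is an arbitrary compact connected abelian group and $G^0$ is only pro-Lie (the paper reduces to the Lie case for all of $G$ via Theorem \ref{distal-str} and Lemma 3.1 of \cite{SY3} before arguing). In effect, your ``winding computation'' is precisely the content of Proposition 4.2 of \cite{SY3} --- a unipotent automorphism acting distally on $\Sub$ is trivial --- asserted for an unspecified subquotient without the reductions that make such a subquotient exist, be closed, and be invariant; that is the missing idea.
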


\begin{proof} Here $(3)\implies (2)$ follows from Lemma \ref{aut-g}.  
It is obvious that $(2)\implies (1)$. We now show that $(1)\implies (4)$. Suppose (1) holds. 
Since $K$ is the unique maximal compact subgroup of $G$, $K$ is normal in $G$ and $G/K$ is 
a compactly generated nilpotent Lie group without any nontrivial compact subgroup. By Lemma 3.1 of \cite{SY3}, 
every inner automorphism of $G/K$ acts distally on $\Sub_{G/K}$. To prove that $G/K$ is abelian, we may assume 
that $G$ is a Lie group without any nontrivial compact subgroup and show that it is abelian.  If possible, suppose 
$G$ is not abelian. Let $Z=Z(G)$, the centre of $G$. Then $Z\ne G$, and since $G$ is nilpotent, there exists 
a closed subgroup $Z_1=\{g\in G\mid xgx^{-1}g^{-1}\in Z \mbox{ for all } x\in G\}$ such that $Z_1\supsetneq Z\supsetneq\{e\}$. 
Let $y\in Z_1$ be such that $y\not\in Z$. Then there exists $x\in G$ such that $xyx^{-1}=yz$ for some nontrivial 
$z\in Z$. Now $\inn(x)(y)=yz$, and $\inn(x)$ acts trivially on $Z$ which has no nontrivial compact subgroup. 
Let $G_y$ be the subgroup generated by $y$ in $Z_1$. Here, $xy^nx^{-1}=y^nz^n$ and since $G$ is torsion-free, 
we have that no element of $G_y$ is stabilised by $\inn(x)$. By Lemma 3.12 of \cite{SY3}, $\inn(x)\not\in\NC$. 
In particular, $\inn(x)$ does not act distally on $\Sub_G$. This contradicts the statement in (1), and hence 
$G$ is abelian. This implies the first assertion in (4).

If $G$ is a torsion-free Lie group, we have that $K$ is trivial since the set of torsion elements is dense in $K$. 
Hence, the above shows that for such a $G$, $(1)\implies (5)$ and also $(4)\implies (5)$. Since $(5)\implies (4)$ 
and $(5)\implies (3)$, all the statements $(1-5)$ are equivalent for such a torsion-free Lie group $G$.

Now we show that $G=Z_G(G^0)$, i.e.\ we show that $G^0$ is central in $G$. If $G^0$ is trivial, then $G=Z_G(G^0)$. 
Now suppose $G^0\ne\{e\}$. We know from Proposition \ref{centraliser-nilp} that $K$ centralises $G^0$. 

Suppose $G$ is a Lie group. We know that $C=K\cap G^0$ is the maximal compact subgroup of $G^0$, $C$ is connected 
and central in $G^0$, and by Proposition \ref{centraliser-nilp}, $C$ is central in $KG^0$. 

We first show that $C$ is central in $G$. Since $C$ is characteristic in $G^0$, $C$ is normal in $G$. As $G/K$ is abelian 
and $G^0$ is normal, we have that $[G,G^0]\subset K\cap G^0=C$. Suppose $C$ is not central in $G$ and 
suppose $x\in G$ does not centralise $C$. Then $x\not\in KG^0$. As $G/K$ has no nontrivial compact subgroup, 
$xK$ generates a discrete infinite subgroup in $G/K$, and hence the cyclic group $G_x$ generated by $x$ in $G$ 
is discrete and infinite. From above, $[G_x,C]\ne \{e\}$. Let $C_0=C$ and  $C_k=\ol{[G_x,C_{k-1}]}$, $k\in\N$. 
Here, $C_1\ne \{e\}$, $C_k\subset C_{k-1}\subset C$, $xC_kx^{-1}=C_k$ and $C_k$ is a compact subgroup of $C$, $k\in\N$. 
Since $C$ is connected, we get that $C_1$, and hence each $C_k$ is connected. Moreover, $\inn(x)$ acts 
trivially on $C_{k-1}/C_k$, $k\in\N$. As $G$ is nilpotent, $C_l\ne\{e\}$ and $C_{l+1}=\{e\}$ for some $l\in\N$. 

Since $C$ is a connected abelian Lie group, the above implies that the action of $\inn(x)$ on $C$ is unipotent 
(i.e.\ the eigenvalues of $\Ad(x)$ on the Lie algebra of $C$ are all equal to 1). As $\inn(x)$ acts distally on $\Sub_C$, 
by Proposition 4.2 of \cite{SY3}, we get that $\inn(x)$ acts trivially on $C$, (one can also directly argue as in Step 1 of 
the proof of Proposition 4.2 of \cite{SY3} to conclude that $\inn(x)|_{C}=\Id$).  This leads to a contradiction, 
and hence we get that $x$ centralises $C$. Since this holds for all $x\in G$ we get that $G=Z_G(C)$. 

Now suppose $G^0\ne C$. By Corollary 4.5 of \cite{SY3}, $G^0=\R^n\times C$ for some $n\in\N$. Suppose 
$G\ne Z_G(G^0)$. There exists $x\in G$ which does not centralise $G^0$. Then $x\not\in KG^0$ as $KG^0$ centralises 
$G^0$. Note that as $G/K$ is abelian, we have that $[G,G^0]\subset C$ and $xgx^{-1}\in gC$ for all $g\in\R^n$. Therefore, 
$\inn(x)$ acts trivially on $G^0/C$, and hence the action of $\inn(x)$ on $G^0$ is unipotent. By Proposition 4.2 of \cite{SY3}, 
$\inn(x)$ acts trivially on $G^0$, (one can also directly conclude this by arguing as in the latter part of Step 2 of the proof 
of Proposition 4.2 of \cite{SY3}). This leads to a contradiction, and hence $x\in Z_G(G^0)$. Since this holds for all $x\in G$, 
we have that $G=Z_G(G^0)$.

Now suppose $G$ is not a Lie group. Since $G$ is nilpotent, it is distal, and since it is compactly generated, 
locally compact and metrizable, by Theorem \ref{distal-str}, 
$G$ is a projective limit of Lie groups $G/K_n$, where $K_n\subset K$, $n\in\N$, and $\bigcap_n K_n=\{e\}$. 
By Lemma 3.1 of \cite{SY3}, every inner automorphism of $G/K_n$ also acts distally on $\Sub_{G/K_n}$, $n\in\N$. 
As $(G/K_n)^0=G^0K_n/K_n$ is a Lie group, we get from above that $G/K_n=Z_{G/K_n}(G^0K_n/K_n)$. This implies that 
$[G,G^0]\subset K_n$ for all $n$, and hence $[G,G^0]\subset \bigcap_n K_n=\{e\}$. Therefore, $G=Z_G(G^0)$. 
This completes the proof of $(1)\implies (4)$.

Now suppose $(4)$ holds. We show that (3) holds. Since $G/K$ is abelian, for every $g\in G$, the $\Inn(G)$-orbit of $g$ 
is contained in $gK$. Therefore, (1) of Lemma \ref{aa} is satisfied for $\Omega=\Inn(G)$. Now we show that $\Inn(G)$ 
is equicontinuous at $e$. By Theorem \ref{distal-str}, $G$ is Lie projective, and hence has compact normal subgroups 
$K_n$ such that $G/K_n$ is a Lie group, $n\in\N$, and $\bigcap_n K_n=\{e\}$. Therefore, $G^0K_n$ is open in $G$. 
Let $\{U_n\}_{n\in\N}$ be a neighbourhood basis of the identity $e$ in $G^0$. Since $K$ centralises $G^0$, 
$K_nU_n=U_nK_n$, $n\in\N$, and $\{K_nU_n\}_{n\in\N}$ is a neighbourhood basis of the identity $e$ in $G$. As 
$G=Z_G(G^0)$, we have that for all $x\in G$, $xK_nU_nx^{-1}=K_nU_n$, $n\in\N$. Therefore, $\Inn(G)$ is equicontinuous at 
$e$ and by Lemma \ref{aa}, $\ol{\Inn(G)}$ is relatively compact in $\Aut(G)$, and hence it is a compact group and (3) holds. 
Therefore, $(1-4)$ are equivalent. 
\end{proof}

Note that in Theorem \ref{distal-gp}, (5) is not equivalent to $(1-4)$ in general. There exist metrizable compact non-abelian 
totally disconnected nilpotent groups $G$; e.g.\ take $G$ to be a subgroup of strictly upper triangular matrices in 
$\SL(3,\Z_p)$, where $\Z_p$ is the ring of $p$-dic integers in $\Q_p$ for a primes $p$. For such a $G$, the inner 
automorphisms act distally on $\Sub_G$, and hence Theorem \ref{distal-gp} $(1-4)$ obviously hold for $G$ 
due to Proposition \ref{cpt-distal}. 

\bigskip
\noindent{\bf Acknowledgements} R.\ Shah would like to acknowledge the MATRICS research grant from DST-SERB, 
Govt.\ of India. R.\ Palit would like to acknowledge the CSIR-JRF research fellowship from CSIR, Govt.\ of India. 
The authors would like to thank the anonymous referee for useful comments and for a sketch of the proofs for 
Lemma \ref{newl} and Theorem \ref{newp} for lattices in a connected semisimple Lie group. 

\bibliographystyle{amsplain}

\begin{thebibliography}{99}

\bibitem{A1} H.\ Abels, Distal affine transformation Group, {\em J.\ Reine Angew.\ Math.} 299/300 (1978), 294--300.

\bibitem{A2} H.\ Abels, Distal automorphism groups of Lie groups, {\em J.\ Reine Angew.\ Math.}  329 (1981), 82--87.

\bibitem{Ar} R.\ Arens, Topologies for homeomorphism groups, {\em Amer.\ J.\ Math.}  68 (1946) 593--610.

\bibitem{ABBGNRS} M.\ Abert, N.\ Bergeron, I.\ Biringer, T.\ Gelander, N.\ Nikolov, J.\ Raimbault and I.\ Samet, On the 
growth of $L^2$-invariants for sequences of lattices in Lie groups, {\em Ann.\ of Math.} (2) 185 (2017), 711--790.

\bibitem{BC1} H.\ Baik and L.\ Clavier, The space of geometric limits of One-generator closed subgroups of ${\rm PSL}_2(\R)$,  
{\em  Algebr.\ Geom.\ Topol.}  13 (2013), 
549--576.

\bibitem{BC2} H.\ Baik and L.\ Clavier, The space of geometric limits of abelian subgroups of ${\rm PSL}_2(\C)$, 
{\em Hiroshima Math.\ J.} 46 (2016), 1--36.

\bibitem{BP} R.\ Benedetti and C.\ Petronio, {\em Lectures on hyperbolic geometry}, (Springer-Verlag, 1992).

\bibitem{BJM} J.\ F.\ Berglund,  H.\ D.\  Junghenn and P.\ Milnes,  {\em Analysis on semigroups. Function spaces, 
compactifications, representations}. Canadian Math.\ Society Series of Monographs and Adv.\ Texts. 
A Wiley-Interscience Publication. (John Wiley \& Sons, Inc., 1989). 

\bibitem{B} A.\ Borel, {\em Introduction to Arithmetic Groups}, University Lecture Series 73, (American Mathematical Society, 2019).

\bibitem{BHK} M.\ Bridson, P.\ de la Harpe and V.\ Kleptsyn, The Chabauty space of closed subgroups of the 
three-dimensional Heisenberg group, {\em Pacific J.\  Math.} 240 (2009), 1--48.

\bibitem{Ch} C.\ Chabauty, Limite d'ensemble et g\'eom\'etrie des nombres, {\em Bull.\ Soc.\ Math.\ France} 78 (1950), 143--151.

\bibitem{Da} S.\ G.\ Dani,  Convolution roots and embeddings of probability measures on locally compact groups. 
{\em Indian J.\ Pure Appl.\ Math.}  41 (2010), 241--250.

\bibitem{El} R.\ Ellis, Distal transformation Groups, {\em Pacific J.\ Math.}  8 (1958) 401--405.

\bibitem{Fu} H.\ Furstenberg, The Structure of distal flows, {\em  Amer.\ J.\ Math.}  85 (1963), 477--515.

\bibitem{He} H.\ Heyer, {\em Probability measures on locally compact groups}, (Springer-Verlag, 1977). 

\bibitem{Ho} G.\ Hochschild, The Automorphism group of a Lie group, {\em Trans.\ Amer.\ Math.\ Soc.}  72 (1952), 209--216. 

\bibitem{Iw} K.\ Iwasawa, On some type of topological groups, {\em Annals of Math.\ II}, 50 (1949), 507--558.

\bibitem{Ja} W.\ Jaworski, Contraction groups, ergodicity and distal properties of automorphisms on compact groups.
{\em Illinois J.\ Math.}  56 (2012), 1023--1084.

\bibitem{JR} W.\ Jaworksi and  C.\ R.\ E.\ Raja,  The Choquet-Deny theorem and distal properties of totally disconnected locally 
compact groups of polynomial growth, {\em New York J.\ Math.} 13 (2007), 159--174.

\bibitem{Je} J.\ Jenkins,  Growth of connected locally compact groups, {\em J.\  Functional Analysis} 12 (1973), 113--127.

\bibitem{L1} V.\ Losert,  On the structure of groups with polynomial growth {\em Math.\ Zeit.} 195 (1987), 109--117.

\bibitem{L2} V.\ Losert,  On the structure of groups with polynomial growth II. {\em J.\ Lond.\ Math.\ Soc.}  63 (2001), 640--654.

\bibitem{Mo} C.\ C\ .Moore, Distal affine transformation groups, {\em Amer.\ J.\ Math.}  90 (1968), 733--751.

\bibitem{PH} I.\ Pourezza  and J.\ Hubbard, The space of closed subgroups of $\R^2$, {\em Topology}  18 (1979), 143--146.

\bibitem{Rag} M.\ S.\ Raghunathan, {\em Discrete subgroups of Lie groups}, (Springer-Verlag, 1972).

\bibitem{RS1} C.\ R.\ E.\ Raja and R.\ Shah, Distal actions and shifted convolution property, {\em Israel J.\ Math.} 177 (2010), 301--318.

\bibitem{RS2}  C.\ R.\ E.\ Raja and R.\ Shah, Some properties of distal actions on locally compact groups, 
{\em Ergodic Theory Dynam.\ Systems} 39 (2019), 1340--1360.

\bibitem{Re} C.\ D.\ Reid,   Dynamics of flat actions on totally disconnected, locally compact groups, {\em New York J.\ Math.}  22 (2016),  115--190.

\bibitem{Ro} J.\ Rosenblatt, A distal property of groups and the growth of connected locally compact groups, 
{\em Mathematika}  26 (1979), 94--98.

\bibitem{Sh} R.\ Shah, Orbits of distal actions on locally compact groups, {\em  J.\ Lie Theory}  22 (2010), 586--599.

\bibitem{SY2} R.\ Shah and A.\ K.\ Yadav,  Distality of certain actions on $p$-adic spheres, {\em J.\ Australian Math.\ Soc.} doi:10.1017/S1446788719000272

\bibitem{SY3} R.\ Shah and A.\ K.\ Yadav, Distal actions of automorphisms of Lie groups $G$ on $\Sub_G$.  arXiv:1909.04397v2 [math.DS]

\bibitem{Str} M.\ Stroppel, {\em Locally compact groups}, EMS Textbooks in Mathematics, (European Math.\ Society, 2006).

\bibitem{Wi}  G.\ A.\ Willis, Totally disconnected, nilpotent, locally compact groups,
{\em Bull.\ Austral.\ Math.\ Soc.}  55 (1997), 143--146. 
\end{thebibliography}

\bigskip\medskip
\noindent{
\advance\baselineskip by 2pt
\begin{tabular}{ll}
Rajdip Palit & \hspace*{2.5cm}Riddhi Shah \\
School of Physical Sciences & \hspace*{2.5cm}School of Physical Sciences\\
Jawaharlal Nehru University & \hspace*{2.5cm}Jawaharlal Nehru University\\
New Delhi 110 067, India & \hspace*{2.5cm}New Delhi 110 067, India\\
rajdip1729@gmail.com & \hspace*{2.5cm}rshah@jnu.ac.in \\
{ }& \hspace*{2.5cm}riddhi.kausti@gmail.com
\end{tabular}}

\end{document}